\documentclass[12pt]{amsart}
\usepackage[dvipsnames]{xcolor}
\usepackage{amssymb,amsthm,graphicx,mathrsfs,url,latexsym,enumerate}
\usepackage{a4wide}
\usepackage{bm}
\usepackage{ifthen}
\usepackage{verbatim}
\usepackage{fancyhdr}
\usepackage{url}
\usepackage{bbm}
\usepackage[sans]{dsfont}
\usepackage{transparent}
\usepackage{amsbsy}
\usepackage[colorlinks=true,linkcolor=Red,citecolor=Green]{hyperref}
\usepackage{wrapfig}
\usepackage{tikz}

\usetikzlibrary{decorations.pathreplacing}

\usepackage[top=2.5cm, bottom=2cm, hmargin=1.8cm]{geometry}

\definecolor{bleu}{RGB}{27,88,145}
\definecolor{mauve}{RGB}{138,20,79}

\renewcommand{\Im}{\operatorname{Im}}
\renewcommand{\Re}{\operatorname{Re}}

\newcommand{\diag}{\operatorname{diag}}

\newcommand{\E}{\mathbb E}
\renewcommand{\P}{\mathbb P}
\newcommand{\C}{\mathbb C}
\newcommand{\N}{\mathbb N}
\newcommand{\R}{\mathbb R}
\newcommand{\indic}{\operatorname{1\negthinspace l}}

\newcommand{\Ran}{\operatorname{Ran}}

\renewcommand{\div}{\operatorname{div}}
\def\<{\langle}
\def\>{\rangle}

\newcommand{\be}{\begin{equation}}
\newcommand{\ee}{\end{equation}}
\newcommand{\bes}{\begin{equation*}}
\newcommand{\ees}{\end{equation*}}

\numberwithin{equation}{section}
\numberwithin{figure}{section}

\newcommand{\vvert}[1]{\left\lVert#1\right\rVert}
\newcommand{\lb}{\operatorname{[\negthinspace [}}
\newcommand{\rb}{\operatorname{]\negthinspace ]}}

\newcommand{\Ke}{K^{\bm \varepsilon}}

\newtheorem{theorem}{Theorem}

\newtheorem{defin}{Definition}[section]
\newtheorem{corollary}[defin]{Corollary}
\newtheorem{lemma}[defin]{Lemma}

\newtheorem{proposition}[defin]{Proposition}
\newtheorem{remark}[defin]{Remark}

\def\ccc{{\mathcal C}}\def\ddd{{\mathcal D}}

\def\nnn{{\mathcal N}}

\def\vvv{{\mathcal V}}

\def\Gr{{\mathscr G}}\def\Hr{{\mathscr H}}
\def\Lr{{\mathscr L}}

\def\Tr{{\mathscr T}}

\begin{document}

\title[The Two-Dimensional Narrow Escape Problem]{The Two-Dimensional Narrow Escape Problem with Deterministic Initial Condition: Mean Escape Time and Escape location probabilities}
\author[L. Delande, T. Lelièvre, B. Nectoux, T. Normand]{Lois Delande, Tony Lelièvre, Boris Nectoux, Thomas Normand}
\maketitle

\begin{abstract}
This paper presents a rigorous asymptotic analysis of the two-dimensional narrow escape problem for a Brownian particle initiated at a deterministic position $x$. While we address the mean escape time, the main novelty lies in our derivation of the asymptotic escape distribution.  We derive explicit asymptotic formulas that characterize the dependence of the escape behavior on the relative configuration of the starting point and the window centers. These results remain valid as the starting position approaches the boundary windows and include refined error estimates. The formulas are obtained with  two independent methods, each of them having their own interest. 
A central feature of this work is the rigorous treatment of the underlying mixed boundary value problems in the absence of a contact angle at the escape window boundaries. By leveraging recent results on the maximal regularity of the gradient of the solutions of such mixed boundary value problems, we ensure the analytical validity of our approach despite the inherent loss of regularity of these solutions at the escape window boundaries.  Finally, we apply these general results to provide a full sharp asymptotic equivalents for regimes where window sizes scale at specific relative rates.

\medskip
\noindent \textbf{Keywords.} Narrow escape problem, reflected Brownian motion,  escape time location, equilibrium potential, mixed boundary value problems.\\
\textbf{AMS classification.} 35J25, 35C20, 60J65, 35R60, 92C37. 
   
\end{abstract}

\section{Introduction}

\subsection{The narrow escape problem}\label{sec.NEP}
The problem investigated in this paper is motivated by applications in biology, and more specifically by the study of cellular receptors. Receptors are specialized proteins, typically located either on the cell membrane or within neurons, whose role is to interact with signaling molecules such as neurotransmitters, hormones, or other chemical messengers. Through these interactions, receptors play a fundamental role in cellular communication and in the regulation of many physiological processes.

In realistic biological environments, however, receptors do not move freely at all times. Experimental observations indicate that they may remain confined or trapped within specific regions of the cell for relatively long periods before escaping and resuming their motion. Such trapping phenomena are particularly important in neuronal membranes, where the spatial organization of receptors strongly influences synaptic transmission and signal processing.

These observations naturally lead to several mathematical and modeling questions. First, one must determine how to describe the trapping region (also called corrals) and the stochastic dynamics governing the motion of the receptor inside and outside this region. Second, it becomes important to quantify the duration of confinement, namely the amount of time a receptor spends in the trapped state before escaping. Finally, one seeks to understand the statistical properties of the escape event itself, in particular the probability distribution of the locations through which the receptor escapes the trapping domain. Addressing these questions is essential for gaining a deeper understanding of receptor dynamics and their biological implications. 

To address these questions, Holcman and Schuss \cite{HoSc04, HoSc14, HoSc15} proposed the \textit{narrow escape problem} framework. In this model, the confinement region (the corral) is represented by a smooth, bounded subdomain $\Omega \subset \mathbb{R}^2$. While the framework is applicable to $\mathbb{R}^3$, this work focuses exclusively on the two-dimensional case. 
The escape locations, representing the narrow openings through which the receptor escapes, are modeled as $N$ small, disjoint subsets of the boundary, denoted by $\Sigma_i \subset \partial \Omega$ for $i \in \{1, \dots, N\}$. The trajectory of the receptor is described by a Brownian motion in  $\Omega$, which is:
\begin{itemize}
    \item[-]  Absorbed  on each escape window $\Sigma_i$;
    \item[-] Reflected on the remaining boundary $\partial \Omega \setminus \bigcup_{i=1}^N \Sigma_i$.
\end{itemize}
In the limit of asymptotically small openings $\Sigma_i$, the narrow escape problem is  then defined by two primary objectives:
\begin{enumerate}
    \item  (Mean escape time).  Compute the expected time the process spends in $\overline{\Omega}$ before escaping through one of the windows $\Sigma_i$.
       \item  (escape time probability).  Determine the probability that the process escapes $\Omega$ specifically through a given window $\Sigma_i$.
\end{enumerate}
Prior to introducing our specific objectives, we provide the mathematical formulation of the narrow escape problem.

\begin{remark}  The asymptotic analysis of such trapping phenomena is also closely tied to the concept of entropic metastability in molecular dynamics \cite{LeSt16}. While metastability is frequently associated with energy landscapes in high dimensions, the two-dimensional narrow escape problem provides a purely geometric analogue where the system is trapped in a subdomain due to an entropic bottleneck, the small size of the escape  windows, rather than an energetic barrier. 
 \end{remark}

\subsection{The mathematical formulation}
Throughout this work,  the corral $\Omega$ is a  smooth (i.e. $\mathcal C^\infty$) bounded  subdomain\footnote{Namely, open, nonempty, and connected.} of $\mathbb R^2$. Let $(X_t)_{t \geq 0}$  be Brownian motion in $\Omega$ which is reflected on the boundary $\partial \Omega$ of $\Omega$. Since $\Omega$ is a smooth subdomain of $\mathbb R^2$, such a process   $(X_t)_{t \geq 0}$ is well-defined and is solution of the following stochastic differential equation: 
\begin{equation}\label{eq:SDE}
dX_t = \sqrt{2} dB_t - \indic_{\partial \Omega}(X_t) n(X_t) d\ell_t,
\end{equation}
where $(B_t)_{t \geq 0}$ is a standard Brownian motion in $\mathbb{R}^2$, $\indic_A$ denotes the indicator function of a set $A \subset \mathbb{R}^2$, $n(x)$ is the outward unit normal vector at $x \in \partial \Omega$, and $(\ell_t)_{t \geq 0}$ denotes the boundary local time.
 For any closed set $A \subset \mathbb{R}^2$, we define  
\begin{equation}\label{eq:stopping}
\tau_A = \inf \{ t \geq 0 \mid X_t \in A \}.
\end{equation}
As the set $A$ is closed and the trajectories of $(X_t)_{t \geq 0}$ are a.s. continuous,  $\tau_A$ is a  stopping time w.r.t.  the natural filtration of $(X_t)_{t \geq 0}$.  
Let $x^{(1)}, \dots, x^{(N)}$ be $N \geq 1$ distinct points on the boundary $\partial \Omega$. For each $i \in \lb1,N\rb$ and a given radius $\varepsilon_i > 0$, we define the escape window $\Sigma_\ddd^{i, \varepsilon_i}$ as the intersection:
\be\label{eq.fene}
\Sigma_\ddd^{i, \varepsilon_i} = \partial\Omega \cap B(x^{(i)}, \varepsilon_i)
\ee
where $B(x, r)$ denotes the open ball in $\mathbb{R}^2$ centered at $x$ with radius $r$. Note that $\Sigma_\ddd^{i, \varepsilon_i}$ is an open subset  $\partial \Omega$.  The subscript $\ddd$ serves as a precursor to Section~\ref{sub:regu}, indicating that Dirichlet (i.e. absorbing) boundary conditions  will be imposed on the infinitesimal generator of the process $(X_t)_{t \geq 0}$ (namely on the Laplacian operator). Set 
\be\label{eq.epsil}
\bm \varepsilon = (\varepsilon_1, \dots, \varepsilon_N)\in (\mathbb R_+^*)^N.\ee
We assume   throughout this work  that $ |\bm \varepsilon|>0$ is sufficiently small (say $0< |\bm \varepsilon|\le \min_{i\neq j} |x^{(i)}-x^{(j)}|/4$)  so that the escape windows are disjoint, i.e. 
\bes
\forall i\neq j,\ \ \overline\Sigma_\ddd^{i, \varepsilon_i}\cap\overline\Sigma_\ddd^{j, \varepsilon_j} = \emptyset.
\ees
A schematic representation of  the geometric situation is given in Figure \ref{fig:narrow}. For notational convenience, we introduce
\bes
\Sigma_\ddd^{\bm \varepsilon} = \bigsqcup_{i=1}^N \Sigma_\ddd^{i, \varepsilon_i}\ \text{ and }\ \Sigma_\nnn^{\bm \varepsilon} = \partial\Omega\setminus\overline\Sigma_\ddd^{\bm \varepsilon}
\ees
Note that $\Sigma_\nnn^{\bm \varepsilon}$ is an open subset  of $\partial \Omega$.  
The subscript $\nnn$ indicates  that Neumann (i.e. reflecting) boundary conditions will be imposed on the infinitesimal generator of the process $(X_t)_{t \geq 0}$. 
We can now reformulate the two primary objectives of the narrow   problem which were introduced in Section \ref{sec.NEP} as follows.   
 Given a initial condition  $X_0$ supported within $\overline \Omega\setminus  \overline{\Sigma}_\ddd $, the problem consist in  studying in the limit $\varepsilon\to 0$ (i.e. the small openings regime) the asymptotic behavior of: 
 \begin{enumerate}
    \item   The mean escape time   $\E  [\tau_{\overline\Sigma_\ddd^{\bm \varepsilon}}  ]$. 
    
           \item  The escape location probabilities  $\P   [X_{\tau_{\overline\Sigma_\ddd^{\bm \varepsilon}}}\in\overline\Sigma_\ddd^{i, \varepsilon_i}  ]$, $i \in \lb1,N\rb$.
\end{enumerate}
\begin{figure}[h]
\centering
\tikzset{every picture/.style={line width=0.75pt}}
\begin{tikzpicture}[x=0.75pt,y=0.75pt,yscale=-1,xscale=1]
\draw (642,453) .. controls (683,459) and (620,422) .. (668,429) .. controls (716,436) and (636,380) .. (764,403) .. controls (892,426) and (843.55,499.16) .. (863,528) .. controls (882.45,556.84) and (855,571) .. (843,564) .. controls (831,557) and (732.5,556) .. (727,563) .. controls (721.5,570) and (737,586.5) .. (748,593) .. controls (759,599.5) and (763,611) .. (705,609) .. controls (647,607) and (700,557) .. (651,535) .. controls (602,513) and (601,447) .. (642,453) -- cycle ;
\draw [color = black, line width = 2.5pt] (642,453) .. controls (683,459) and (620,422) .. (668,429) ;
\draw [color = black, line width = 2.5pt] (863,528) .. controls (882.45,556.84) and (855,571) .. (843,564) ;
\draw [color = black, line width = 2.5pt] (748,593) .. controls (759,599.5) and (763,611) .. (705,609) ;
\draw (750,480) node{$\Omega$};
\draw (650,400) node{$\Sigma_\ddd^{1, \varepsilon_1}$};\draw (652,440) node{$\bullet$} node[left]{$x^{(1)}$};
\draw (900,550) node{$\Sigma_\ddd^{2, \varepsilon_2}$};\draw (868,555) node{$\bullet$} node[below]{$x^{(2)}$};
\draw (780,600) node{$\Sigma_\ddd^{3, \varepsilon_3}$};\draw (735,609) node{$\bullet$} node[below]{$x^{(3)}$};
\draw (825,515) node{$X_t$};
\draw [densely dashed] (672,524) -- (687,518) -- (659,505) -- (687,530) -- (676,541) -- (672,555) -- (683,564) -- (686,554) -- (696,571) -- (714,557) -- (727,563) -- (719,573) -- (700,557) -- (675,578) -- (685,605) -- (700,595) -- (716,593) -- (705,581) -- (724,552) -- (707,540) -- (704,523) -- (719,533) -- (717,517) -- (725,502) -- (741,506) -- (732,519) -- (752,514) -- (751,531) -- (768,518) -- (763,537) -- (741,542) -- (752,557) -- (763,548) -- (776,552) -- (778,543) -- (791,548) -- (794,534) -- (787,519) -- (799,523) -- (803,536) -- (815,531) -- (807,545) -- (824,542) -- (832,548) -- (849,545) -- (849,555) -- (862,561) ;
\end{tikzpicture}
\caption{The corral $\Omega$ with three escape windows on its boundary ($N=3$). }
\label{fig:narrow}
\end{figure}

\subsection{Existing results and purpose of this work}

\subsubsection{Related literature on the narrow escape problem}
Several results concerning the mean escape time have been established in the literature. Initial derivations for the mean escape time of a Brownian particle as well as the escape location distribution, in the asymptotic regime of small escape windows, were performed using formal methods in \cite{HoSc04,SiScHo06,SiScHoEi06,HoSc15}, providing a foundational understanding of the narrow escape problem (see also \cite{PiWaPeKo10}). In these works, one of the central contributions lies in framing the narrow escape problem as a singular perturbation of some Neumann problems. This conceptual framework is taken up in our study, where we employ two different methodologies to address the narrow escape problem, see Section \ref{sec.cont} for more details. We mention that in these works, the three-dimensional case is also investigated.

More recently, the full exit-time distribution in the narrow escape regime was obtained, still at a formal level, in \cite{GrMeOs19}.
Building on the approach introduced in \cite{HoSc04} and using layer potential techniques developed in \cite{AmKaLe09},  asymptotic expansions of the mean exit time and of the eigenvalues of the associated Laplacian with mixed boundary conditions, in the small openings  limit, were established in \cite{AmKaKaLe12} with more rigorous mathematical considerations (see also \cite{AmGaKaLeSo11}). Let us also mention that the case of Brownian particle perturbed by a gradient drift field is also treated in \cite{AmKaKaLe12}.

These results were subsequently extended to three-dimensional manifolds with boundary in \cite{SiScHo08,ChWaSt10} and more recently in \cite{NuTzTz21} using microlocal analysis techniques. We also refer to \cite{Ji15} for the asymptotic behavior of the eigenvalues of the Laplacian in a domain with a thin tubular hole, with Robin or Neumann boundary conditions imposed on the boundary of the hole. More generally, complete asymptotic expansions for eigenvalue perturbations due to small inclusions or small deformations of their interfaces can be found in \cite[Chapter 3]{AmKaLe09}.

In contrast, the study of escape location probabilities remains much more limited and has mainly been restricted to specific geometric configurations.  More precisely, in the case where the the corral $\Omega$ is a  slightly  modified  $n$-dimensional sphere ($n=2,3$) and when    $(X_t)_{t \geq 0}$ is initially distributed according to its quasi-stationary distribution, the asymptotic distribution  of the escape location has recently been investigated in \cite{LeRaSt24} using semiclassical analysis. 
 Assuming  $(X_t)_{t \geq 0}$ is initially distributed according to its quasi-stationary distribution, the results of \cite{LeRaSt24} were recently extended   in \cite{CaLeNo26} for general subdomain $\Omega$ of $\mathbb R^2$.
 The quasi-stationary distribution describes a long-term forgotten memory regime where the process has already spent enough time mixing in the interior to lose all information about its original location.
    The approximation framework developed in \cite{CaLeNo26} serves as one of the two independent methodologies employed in the present work.  
    However, while their analysis is tailored to a quasi-stationary initial state, our focus extends to deriving sharp asymptotic equivalents for a process starting from an arbitrary deterministic point $X_0 = x \in \overline{\Omega}$ (see Section \ref{sec.cont} for further details).  We also refer to \cite{Fa24} for a complex-analysis-based approach  to the hitting probability of a single small target under mixed boundary conditions, where the Brownian motion is reflected along the domain and absorbed elsewhere. While their results hold in arbitrary dimensions, the geometric configuration in  \cite{Fa24}  is highly specific, the derived equivalents and bounds remain implicit, and no explicit error terms are provided. 

\subsubsection{Purpose of this work and strategy}
\label{sec.cont}
The primary objective of this work is to provide a rigorous and comprehensive asymptotic analysis of the two-dimensional narrow escape problem: i.e. of the mean escape time and of the escape location probabilities. 
  Our focus is structured around three key pillars:
    
    \begin{enumerate}
    \item[(A)] Objectives of the study.  We investigate the mean escape time (Theorem \ref{thm:1}) and, most importantly, the escape location probabilities   (Theorem \ref{thm:2}) for a process starting from a deterministic point $x$ within the closure of $\Omega$ (more precisely within $\overline{\Omega}_{\bm \varepsilon}$, see \eqref{eq.Oe}). While the mean escape time has been extensively studied, a general asymptotic formula for the escape probabilities starting from an arbitrary interior point, rather than a quasi-stationary distribution, addresses a relatively less explored aspect in the existing literature, which this work seeks to explore.  By avoiding the forgotten memory assumption inherent to the quasi-stationary distribution, a deterministic starting point preserves the localized history of the process' path, allowing us to explicitly quantify the immediate geometric influence of the initial position. 
Indeed, when the process starts from a specific point $x$, the resulting formula \eqref{eq:thmP} explicitly illustrates how the asymptotic behavior of the escape location probabilities depends on the relative positioning of $x$ and the window centers $x^{(k)}$. It is worth noting that the initial conditions $x \in \overline{\Omega}_{\bm{\varepsilon}}$ we consider are permitted to approach the escape windows as $|\bm{\varepsilon}| \to 0$, and that the results of Theorems \ref{thm:1} and \ref{thm:2} hold uniformly for initial conditions $x \in \overline{\Omega}_{\bm{\varepsilon}}$. We then use the general frameworks of Theorems \ref{thm:1} and \ref{thm:2} to derive sharp asymptotic equivalents, complete with explicit error terms, for specific geometric configurations where the relative scaling of the windows is prescribed. This is the purpose of Corollaries \ref{cor:3}, \ref{cor:1}, and \ref{cor:2}. \\
    
    \item[(B)] Strategy and methodology. Our approach relies on the Feynman-Kac formula to transform the probabilistic questions into mixed elliptic boundary value problems. A major strength of this work is the use of two fully independent  methods that yield identical results, thereby reinforcing the robustness of our formulas.\\
          \begin{enumerate} 
          \item[(M1)] \textit{The approximation method}: This approach consists of constructing explicit functions  that approximate the quantities of interest in a suitable topology. We build upon the approximating functions recently developed in \cite{CaLeNo26} for the study of mean escape times starting from a quasi-stationary distribution. A key insight of the present work is the observation that these approximating functions, originally designed for the mean escape time, can be adapted to construct high-order approximations of the escape location probabilities.  \\
          
          \item[(M2)] \textit{The Green method}. This method  utilizes representations of the solution to the underlying mixed boundary value problems (Zaremba problems) in terms of Neumann Green functions (which are introduced below in Section \ref{sec.RkR}). This representation is obtained via  a  careful integration by parts (see Corollary \ref{cor:IPP}). Using the boundary constrains, the problem is then reduced to a linear system involving the unknown traces of the normal derivatives of the functions on the escape windows. The resolution of this system requires the inversion of singular integral operators (with logarithmic kernels) acting on small subintervals of $\mathbb R$. To perform this inversion rigorously, we leverage classical results on singular operators from \cite{Jo82}. We refer to Section \ref{sec.star2} for more details on this strategy. \\
          \end{enumerate}

     \item[(C)] Novelty of the results.  The main novelty lies mainly in the derivation of the explicit asymptotic formula \eqref{eq:thmP} for the hitting probabilities. This result characterizes how the escape location is influenced by the precise geometry of the corral $\Omega$ and the relative configuration of the starting point and the escape window centers. Furthermore, we provide a rigorous justification for these expansions. Unlike the recent spectral approaches  \cite{LeRaSt24}, we do not assume the existence of a contact angle between the absorbing and reflecting boundary parts. By considering a smooth boundary where $\Sigma_\ddd$ and $\Sigma_\nnn^{\bm \varepsilon}$ meet at an angle of $\pi$, we operate in a regime where the solutions to the underlying mixed boundary value problems lose regularity. We address this challenge by employing modern results on the maximal regularity of the gradient for mixed problems, drawing on the works of R. Brown \textit{et al.}, K. Gröger, E. Shamir, and G. Savaré.
        In this low-regularity framework where $\Sigma_\ddd^{\bm \varepsilon}$ and $\Sigma_\nnn^{\bm \varepsilon}$ meets at an angle $\pi$, a particular care is given to the rigorous derivation of the Feynman-Kac, integration-by-parts formulas, and to the maximal regularity of the approximating functions to ensure the validity of our analytical framework.  
                  \end{enumerate} 
            

            \subsection{Notation and main results}
            
            \subsubsection{Some  basic notation}

  For any     open subset $D$ of $\Omega$ with sufficiently smooth boundary and  every $z\in\partial D$,   $n_D(z)$ denotes the outward-pointing unit normal to $D$ at $z$. Moreover,    $\partial_{n_D(z)}f(z)$ denotes the normal derivative  of $f$ at $z\in\partial D$, which we recall is defined by $\nabla f(z)\cdot n_D(z)$. If  no  confusion is possible,  we write $n(z)$ or simply $n$  for $n_D(z)$.  In addition, for a function  $(x,y)\in \mathbb R^2\times \mathbb R^2\mapsto f(x,y)$, 
 the Laplacian of $f$ in the variable $x=(x_1,x_2)\in \mathbb R^2$ is classically defined by 
$$\Delta_x f (x,y)
= \partial_{x_1}^2f (x,y)+ \partial_{x_2}^2f (x,y). 
$$
For a nonempty    open   subset $U$ of $\mathbb R^2$ or a  nonempty    open   subset $U$ of the one-dimensional smooth manifold $\partial \Omega$ $p\in[1,+\infty]$, we denote $W^{s,p}(U)$ the  Sobolev space of functions with regularity $s\in \R$ in $L^p(U)$. We also use  the standard notation $H^s(U)$ for the set $W^{s,2}(U)$. For  $\alpha\in(0,1)$, the set   $\ccc^{0,\alpha}(\overline U)$ is the Banach space of H\"older continuous functions on $\overline U$ of order $\alpha$. For  $x\in\overline\Omega$, we denote by $\delta_x$ the Dirac mass at $x$. 
The   surface measure on $\partial \Omega$ is denoted by $d\sigma(\zeta)$ and  $|A|$ denotes the Lebesgue measure of a Borel set $A \subset \mathbb R^2$.  
We also use the Kronecker delta $\delta_{i,j}$ for $i,j\in\lb1,N\rb$, that is $\delta_{i,j} = 1$ if $i=j$ and $0$ otherwise.  
 In what follows, $\E_x$ (resp. $\P_x$)  stands for the expectation (resp. the probability) when $X_0=x\in \overline \Omega$.

Define
\be\label{eq:defK}
K_k^{\varepsilon_k} = \frac{1}{\big|\ln\frac{\varepsilon_k}{2}\big|}, \ k\in\lb1,N\rb, \  \text{ and }   \mathbf{K}^{\bm \varepsilon} = (K_1^{\varepsilon_1}, \dots, K_N^{\varepsilon_N})\in (\mathbb R_+^*)^N.
\ee 
Note that   $|\mathbf{K}^{\bm \varepsilon}|\to 0$ as $|\bm \varepsilon|\to 0$. 
We finally define the following open subset of $\Omega$:
\be\label{eq.Oe}
\Omega_{\bm \varepsilon} = \Omega\setminus\Big(\bigcup_{k=1}^N\overline{B}(x^{(k)},K_k^{\varepsilon_k})\Big).
\ee 

  \subsubsection{The Neumann Green functions}
\label{sec.RkR}
In this section, we introduce the several auxiliary functions ($R_0^k$, $R_{\Omega}$, $R_{\partial\Omega}$) along with the Neumann Green functions   $G_{\Omega}$ and $G_{\partial\Omega}$ associated with the Neumann problem on $\Omega$. These functions play a pivotal role in establishing the asymptotic formulas for both the mean escape time and the escape location probabilities. More specifically, the function $R_0^k$ is utilized in our first approach (the approximation method) to derive the main results. Conversely, the functions $R_{\Omega}$ and $R_{\partial\Omega}$ serve to construct the Neumann Green functions $G_{\Omega}$ and $G_{\partial\Omega}$, which are the core ingredients of our second approach (the Green function method). As shown later, this second method provides a fully independent proof of our main theorems. The considerations that follow are well-known, and we recall them here for the reader's convenience and to fix notation. 

Define for $z\in \partial \Omega$, the function 
$$\zeta \in \partial \Omega\mapsto w_{\partial\Omega}(\zeta,z) = \frac{1}{|\partial\Omega|} - \frac{1}{\pi}\frac{\zeta-z}{|\zeta-z|^2}\cdot n(\zeta).$$
This function satisfies:
\be\label{eq.CondC2}
\forall z \in \partial \Omega, \ 
\int_{\partial\Omega} w_{\partial \Omega}(\zeta,y)d\sigma(\zeta)=0. 
\ee
 To prove \eqref{eq.CondC2},   consider for $z \in \partial\Omega$ and $r>0$, $\Omega_r(z) := \Omega \setminus \overline{B}(z,r)$. Then
 \bes
\begin{split}
    0 = \int_{\Omega_r(z)}\Delta_x\ln|x-z|dx &= \int_{\partial\Omega\setminus\overline{B}(z,r)}\frac{\zeta-z}{|\zeta-z|^2}\cdot n_{\Omega}(\zeta)\, d\sigma(\zeta) - \int_{\Omega\cap \partial B(z,r)}\frac{d\sigma(\zeta)}{|\zeta-z|}\\
    &\xrightarrow[r\to 0]{}\int_{\partial\Omega}\frac{\zeta-z}{|\zeta-z|^2}\cdot n_{\Omega}(\zeta)d\sigma(\zeta) - \pi,
\end{split}
\ees
 where we used  the fact that, asymptotically as $r \to 0$, only half of the ball $B(z,r)$ lies inside $\Omega$. This achieves the proof of  \eqref{eq.CondC2}. 
  Moreover, note that for every $z\in \partial \Omega$,  the mapping 
\begin{equation}\label{eq.Nde}
\zeta \in \partial \Omega \mapsto \partial_{n(\zeta)} \ln|\zeta - z| = \frac{\zeta - z}{|\zeta - z|^2}\cdot n(\zeta)
\end{equation} 
  is actually smooth. Indeed,   this  mapping is smooth over  $\partial \Omega \setminus \{z\}$.  At $\zeta = z$, a Taylor expansion removes the apparent singularity of this function. 
  Introduce also for $y \in \Omega$,  the    function $w_{\Omega}(\cdot,y)$ over $\partial \Omega$ defined by
\bes 
\zeta \in \partial \Omega\mapsto w_{\Omega}(\zeta,y) = \frac{1}{|\partial\Omega|} - \frac{1}{2\pi}\frac{\zeta-y}{|\zeta-y|^2} \cdot n(\zeta).
\ees
For every $y \in \Omega$,    $\zeta \in \partial \Omega \mapsto w_{\Omega}(\zeta,y) $ is $\mathcal C^\infty$ and  satisfies:
\be\label{eq.CondC1}
\forall y \in \Omega, 
\int_{\partial\Omega} w_{\Omega}(\zeta,y)d\sigma(\zeta)=0. 
\ee
To prove \eqref{eq.CondC1} consider  for $r>0$,   $\Omega_r(y) := \Omega \setminus \overline{B}(y,r)$. For $r$ small enough, the boundary of $\Omega_r(y)$ is the disjoint union of $\partial\Omega$ and $\partial B(y,r)$. Since  $\Delta_x \ln|x-y| = 0$ in $\Omega_r$, it holds that:
\bes
\begin{split}
    0 &= \int_{\partial\Omega}\partial_{n_{\Omega_r}(\zeta)} \ln|y-\zeta|\, d\sigma(\zeta) - \int_{\partial B(y,r)} \partial_{n_{B(y,r)}(z)} \ln|y-\zeta|\, d\sigma(\zeta)\\
    &= \int_{\partial\Omega}\frac{\zeta-y}{|\zeta-y|^2} \cdot n_{\Omega}(\zeta)\, d\sigma(\zeta) - \int_{\partial B(y,r)} \frac{ d\sigma(\zeta) }{|\zeta-y|} = \int_{\partial\Omega}\frac{\zeta-y}{|\zeta-y|^2} \cdot n_{\Omega}(\zeta)d\sigma(\zeta) - 2\pi.
\end{split}
\ees
This achieves the proof of  \eqref{eq.CondC1}.
 With the previous considerations in hand,  we can consider, for every  $k\in\lb1,N\rb$, $y\in \Omega$, and $z \in \partial \Omega$,     the functions $x\in \overline{\Omega}\mapsto R_k^0(x)$, $x\in \overline{\Omega}\mapsto R_{ \Omega}(x, y)$, and $x\in \overline{\Omega}\mapsto R_{\partial \Omega}(x, z)$  defined as the unique (weak) solutions in $H^1(\Omega)$ to the following  the following Neumann problems:
\be\label{eq:Rok}
\left\{
\begin{aligned}
    \Delta R_k^0 &= -\frac{\pi}{|\Omega|}&&\text{in }\Omega,\\
    \partial_{n_\Omega(\zeta)} R_k^0(\zeta) &= -\partial_{n(\zeta)} \ln|\zeta-x^{(k)}| &&\text{on }\partial\Omega,\\
    R_k^0(x^{(k)}) &= 0,
\end{aligned}
\right.
\ee
\be\label{eq:RGreens}
\left\{
    \begin{aligned}
        \Delta_x R_\Omega(x,y) &= 0 &&\text{in }\Omega,\\ 
        \partial_{n_\Omega(\zeta)} R_\Omega(\zeta,y) &= w_\Omega(\zeta,y) &&\text{on }\partial\Omega,\\
        \int_{\partial\Omega} R_\Omega(\zeta,y) d\sigma(\zeta) &= -\frac1{2\pi}\int_{\partial\Omega}  \ln|\zeta-y| d\sigma(\zeta),
    \end{aligned}
\right. 
\ee
\be\label{eq.RpaOmega}
\left\{
\begin{aligned}
    \Delta_x R_{\partial\Omega}(x,z) &= 0&&\text{in }\Omega,\\
    \partial_{n_\Omega(\zeta)} R_{\partial\Omega}(\zeta,z) &= w_{\partial\Omega}(\zeta,z)  &&\text{on }\partial\Omega,\\
    \int_{\partial\Omega}R_{\partial\Omega}(\zeta,z)d\sigma(\zeta) &= -\frac1\pi\int_{\partial\Omega}\ln|\zeta-z|d\sigma(\zeta).
\end{aligned}
\right.
\ee 
In addition, by the elliptic regularity theory for Neumann problems, the  functions $x\in \overline{\Omega}\mapsto R_k^0(x)$, $x\in \overline{\Omega}\mapsto R_{ \Omega}(x, y)$, and $x\in \overline{\Omega}\mapsto R_{\partial \Omega}(x, z)$  are  $\mathcal C^\infty$.  
We note that these  functions are generally not explicit\footnote{Except in some  specific geometric settings such as the two-dimensional disk (see Section \ref{sub:disk}).}.
We then finally set, for  $y \in \Omega$ and $z \in \partial\Omega$,   for all $x\in \overline{\Omega}$\footnote{We set $\ln 0^+=+\infty$.}:
\be\label{eq:Greens2}
G_\Omega(x,y)=\frac1{2\pi}\ln|x-y| + R_\Omega(x,y),\ \ \ G_{\partial\Omega}(x,z)=\frac1{\pi}\ln|x-z| + R_{\partial\Omega}(x,z).
\ee   
 The results of Section \ref{sec.IPPs} below will in particular prove that these    functions  are fundamental solutions  of:
\be\label{eq:Greens1}
\left\{
    \begin{aligned}
        \Delta_x G_\Omega(x,y) &= \delta_y  &&\text{in }\Omega,\\
        \partial_{n_\Omega(\zeta)} G_\Omega(\zeta,y) &= \frac{1}{|\partial\Omega|} &&\text{on }\partial\Omega,\\
        \int_{\partial\Omega} G_\Omega(\zeta,y) d\sigma(\zeta) &= 0,
    \end{aligned}
\right.
\
\text{ and }  \ 
\left\{
    \begin{aligned}
        \Delta_x G_{\partial\Omega}(x,z) &= \delta_z  &&\text{in }\Omega,\\
        \partial_{n_\Omega(\zeta)} G_{\partial\Omega}(\zeta,z) &= \frac{1}{|\partial\Omega|} &&\text{on }\partial\Omega,\\
        \int_{\partial\Omega} G_{\partial\Omega}(\zeta,z) d\sigma(\zeta) &= 0.
    \end{aligned}
\right.
\ee        
 The functions    $R_{\Omega}$ and  $G_{\Omega}$ (resp. $R_{\partial\Omega}$ and $G_{\partial\Omega}$) are symmetric over $\Omega \times \Omega$  (resp. over $\partial\Omega \times \partial\Omega$), see Lemma \ref{le.sym} and \eqref{eq:Greens2}.   
Lemma \ref{lem:symmetry} below  shows that for every $x\in\Omega$ and $z\in\partial\Omega$, 
    $G_\Omega(z,x) = G_{\partial\Omega}(x,z)$.

 \subsubsection{Main results} 
Our  main result regarding the mean escape time is the following.

\begin{theorem}\label{thm:1}
{For all  $\delta\in (0,1)$,  the following asymptotic expansion holds as  $|\bm \varepsilon|\to 0$, uniformly in   $x\in\overline\Omega_{\bm \varepsilon}$:}
    \be\label{eq:thmE} \E_x\big[\tau_{\overline\Sigma_\ddd^{\bm \varepsilon}}\big ] = \frac{|\Omega|}{\pi\sum_{k=1}^{N}K_k^{\varepsilon_k}}\Big[1 + \sum_{k=1}^{N}K_k^{\varepsilon_k}\big (\ln|x-x^{(k)}| + g_k^{\bm \varepsilon}(x)\big )\Big] + O (|\mathbf K^{\bm \varepsilon}|^{1-\delta}),
    \ee
    where  for the function $(g_k^{\bm \varepsilon} )_{k\in\lb1,N\rb}$ is given by, for all  {and $x\in \overline{\Omega}$} (see \eqref{eq:Rok}):
    \be\label{eq:espThomas}
    g_k^{\bm \varepsilon}(x) = R_k^0(x) - \frac{\sum_{j=1,j\neq k}^N K_j^{\varepsilon_j} \big[\ln|x^{(j)}-x^{(k)}| + R_j^0(x^{(k)}) \big]}{\sum_{j=1}^NK_j^{\varepsilon_j}}.
    \ee 
\end{theorem}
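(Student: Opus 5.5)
We write $u^{\bm\varepsilon}(x)=\E_x[\tau_{\overline\Sigma_\ddd^{\bm\varepsilon}}]$ and follow the approximation method (M1). By the Feynman--Kac formula, $u^{\bm\varepsilon}$ is the weak $H^1(\Omega)$ solution of the Zaremba problem
\[
-\Delta u=1 \text{ in } \Omega,\qquad \partial_n u=0 \text{ on } \Sigma_\nnn^{\bm\varepsilon},\qquad u=0 \text{ on } \Sigma_\ddd^{\bm\varepsilon}.
\]
The plan is to build an explicit quasimode $v^{\bm\varepsilon}$, equal to the right-hand side of \eqref{eq:thmE} away from the windows and suitably modified near them. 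We then control $u^{\bm\varepsilon}-v^{\bm\varepsilon}$ by a maximum-principle or energy argument.

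\textbf{Step 1: the ansatz.} Away from the windows, set
\[
v(x)=\frac{|\Omega|}{\pi\sum_k K_k}\Big[1+\sum_k K_k\big(\ln|x-x^{(k)}|+R_k^0(x)\big)\Big]+C^{\bm\varepsilon}.
\]
By \eqref{eq:Rok} and $\Delta\ln|x-x^{(k)}|=0$, this gives $\Delta v=-\frac{|\Omega|}{\pi\sum K_k}\sum_k K_k\frac{\pi}{|\Omega|}=-1$. The normal derivative of each term $\ln|\cdot-x^{(k)}|+R_k^0$ vanishes on $\partial\Omega\setminus\{x^{(k)}\}$, so the Neumann condition holds exactly off the windows.

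Near $x^{(k)}$ we replace $\ln|x-x^{(k)}|$ by the exact window profile. After flattening the boundary, this is the real part of $\ln\big(w+\sqrt{w^2-\varepsilon_k^2}\big)$ on the half-plane, which vanishes on the segment of length $2\varepsilon_k$, has zero Neumann data outside it, and behaves like $\ln|w|-\ln(\varepsilon_k/2)+O(\varepsilon_k^2/|w|^2)$. This explains the constant $K_k=1/|\ln(\varepsilon_k/2)|$. Requiring $v=0$ on each $\Sigma_\ddd^{k}$ yields $N$ linear conditions. Using $R_j^0(x^{(j)})=0$, the leading terms near $x^{(k)}$ are $\ln|x^{(k)}-x^{(j)}|+R_j^0(x^{(k)})$ for $j\neq k$. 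Solving these conditions to first order in $|\mathbf K|$ produces exactly the correction $g_k^{\bm\varepsilon}-R_k^0$ in \eqref{eq:espThomas}.

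\textbf{Step 2: the residual.} We glue the local profiles to the global ansatz with cutoffs on annuli of radius comparable to $K_k^{\varepsilon_k}$, or any scale between $\varepsilon_k$ and $1$. We then estimate the residual $r=u-v$, which solves a mixed problem with small interior source, small Neumann defect and small Dirichlet defect. The error is $O(\varepsilon_k^2/\rho^2)$ on the gluing region, plus the $O(|\mathbf K|^2)$ neglected terms in the linear system multiplied by the prefactor $1/\sum K$, giving $O(|\mathbf K|)$.

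To turn these defects into a uniform pointwise bound we will invoke the maximal gradient regularity for mixed problems at angle $\pi$ quoted in the paper, namely $W^{1,p}$ with $p$ slightly above $2$ (Gröger/Brown). Combined with a capacity/Poincaré estimate, with constant $\sim|\ln\varepsilon|\sim 1/K$ for functions vanishing on a window of size $\varepsilon$, this bounds $\|r\|_\infty$ by $C|\mathbf K|^{1-\delta}$. The loss $\delta$ comes from the $p$-dependence of these constants and from the logarithmic factors.

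\textbf{Step 3: uniformity in $x$.} The restriction $x\in\overline\Omega_{\bm\varepsilon}$, that is $|x-x^{(k)}|>K_k$, ensures that the inner-profile correction $O(\varepsilon_k^2/|x-x^{(k)}|^2)$ and the difference between the true profile and $\ln|x-x^{(k)}|$ are negligible there. Hence the explicit formula holds uniformly on this set.

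The main obstacle is Step 2: obtaining a sup-norm bound on $r$ that is uniform up to the windows despite the singular gradients ($\nabla u\sim \mathrm{dist}^{-1/2}$ at the window endpoints). It also requires that the stability constant of the Zaremba problem, of order $1/\sum K$, not destroy the gain from the quasimode. I would first try the weak maximum principle, which is valid in $H^1$. I would bound $r$ by explicit barriers built from the same harmonic profiles, and use the regularity results only to justify the integrations by parts and the Feynman--Kac identification.
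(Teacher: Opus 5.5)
Your overall plan matches the paper's approximation method: an explicit quasimode compared with $\mathbf e^{\bm\varepsilon}$, with the arcsine profile $\Re\ln(w+\sqrt{w^2-\varepsilon_k^2})$ explaining $K_k^{\varepsilon_k}=1/|\ln(\varepsilon_k/2)|$. However, there is a gap exactly where you flag one, and part of it starts in Step~1.

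\textbf{The ansatz has too few unknowns.} As displayed, your ansatz has a single free constant $C^{\bm\varepsilon}$ and frozen coefficients $\frac{|\Omega|}{\pi\sum_jK_j^{\varepsilon_j}}K_k^{\varepsilon_k}$. Since $1+K_k^{\varepsilon_k}\ln(\varepsilon_k/2)=0$, its trace on $\Sigma_\ddd^{k,\varepsilon_k}$ is $\frac{|\Omega|}{\pi\sum_jK_j^{\varepsilon_j}}\sum_{j\neq k}K_j^{\varepsilon_j}f_j^0(x^{(k)})+C^{\bm\varepsilon}+o(1)$. This is an $O(1)$ quantity that depends on $k$, so one constant cannot make the Dirichlet defect $O(|\mathbf K^{\bm\varepsilon}|)$ on all $N$ windows. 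You need one unknown per window. The paper uses the coefficients $K_k^{\varepsilon_k}\big(1+\sum_{j\neq k}K_j^{\varepsilon_j}f_j^0(x^{(k)})\big)$, with a prefactor $c_{\bm\varepsilon}$ readjusted so that $\Delta\phi^{\bm\varepsilon}=-1$ stays exact. It recovers \eqref{eq:thmE} only afterwards, by discarding terms such as $c_{\bm\varepsilon}K_k^{\varepsilon_k}K_j^{\varepsilon_j}f_k^0(x)$. These are $O(|\mathbf K^{\bm\varepsilon}|^{1-\delta})$ on $\overline\Omega_{\bm\varepsilon}$ only because $|\ln|x-x^{(k)}||\le|\ln K_k^{\varepsilon_k}|+C$ there. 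That is where the loss $\delta$ comes from, not from the $p$-dependence of any regularity constant.

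\textbf{Step~2 does not close as proposed.} In a barrier argument, the Zaremba solution operator acts on interior sources and Neumann data with size of order $(\sum_kK_k^{\varepsilon_k})^{-1}$, since the natural barrier is $\mathbf e^{\bm\varepsilon}$ itself. So every interior or Neumann defect created by your cutoffs is amplified by $|\mathbf K^{\bm\varepsilon}|^{-1}$. The Gr\"oger/Brown $W^{1,q}$ bounds and capacity arguments do not rescue this: their constants depend on the Dirichlet/Neumann partition and degenerate as the windows shrink, so they give no bound uniform in $\bm\varepsilon$. Also, the gluing mismatch is $O(\varepsilon_k^2/\rho^2)$ only if you cut off the decaying part of the profile alone. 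Gluing the whole profile onto $\ln|x-x^{(k)}|+R_k^0$ leaves an $O(\rho)$ term from curvature and $R_k^0$, i.e.\ a source of size $\rho^{-1}$. The barrier cannot absorb this at $\rho\sim K_k^{\varepsilon_k}$.

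\textbf{The missing idea.} Make the quasimode exact everywhere except on the windows, so that no stability constant enters. The paper takes $f_k^{\varepsilon_k}=S_k^{\varepsilon_k}+R_k^{\varepsilon_k}$. Here $S_k^{\varepsilon_k}$ is the logarithmic potential, on all of $\Omega$, of the normalized cut-off arcsine density $w_k^{\varepsilon_k}$ carried by the curved window. It is the global version of your profile and is harmonic in $\Omega$, so no gluing is needed. The term $R_k^{\varepsilon_k}$ is the Neumann corrector \eqref{eq:Rkepd}: it cancels $\partial_nS_k^{\varepsilon_k}$ on $\partial\Omega\setminus\overline\Sigma_\ddd^{k,\varepsilon_k}$ and carries the source $-\pi/|\Omega|$.

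Then $\phi^{\bm\varepsilon}-\mathbf e^{\bm\varepsilon}$ is harmonic, has zero Neumann data on $\Sigma_\nnn^{\bm\varepsilon}$, and has $O(|\mathbf K^{\bm\varepsilon}|)$ Dirichlet data. The Feynman--Kac representation, i.e.\ the maximum principle, then gives $\|\phi^{\bm\varepsilon}-\mathbf e^{\bm\varepsilon}\|_{L^\infty}=O(|\mathbf K^{\bm\varepsilon}|)$ at no cost. This needs only continuity on $\overline\Omega$ (Lemma~\ref{lem:fk} for $\phi^{\bm\varepsilon}$, \eqref{eq:holder} for $\mathbf e^{\bm\varepsilon}$) and smoothness off $\overline\Sigma_\ddd^{\bm\varepsilon}$. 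Your fallback of the weak maximum principle plus barriers, with regularity used only for justification, is the right instinct. But it works only once all defects sit on $\Sigma_\ddd^{\bm\varepsilon}$, or are $O(|\mathbf K^{\bm\varepsilon}|^{2-\delta})$ in sup norm elsewhere.
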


\noindent
A few remarks and further details concerning Theorem \ref{thm:1} are in order.

 \begin{enumerate}
\item[\textbf{(r1)}] We adopt the convention that $\sum_{\emptyset} = 0$, which  arises in Theorem \ref{thm:1}   in the specific case $N=1$. 
\vspace{0.2cm}

\item[\textbf{(r2)}] As it will be clear from our analysis, the remainder term $O (|\mathbf K^{\bm \varepsilon}|^{1-\delta})$ in \eqref{eq:thmE} can be refined as $O(|\mathbf K^{\bm \varepsilon}|)$ if one restricts  the initial conditions $x$ in a   fixed (i.e. independent of $\bm \varepsilon$) open set $D$ such that $\overline{D} \subset \overline{\Omega} \setminus \{x^{(1)}, \ldots, x^{(N)}\}$.   
\vspace{0.2cm}

\item[\textbf{(r3)}] One can verify that every function $g_k^{\bm \varepsilon} $ is  bounded over $\overline\Omega$, uniformly in  $|\bm \varepsilon|>0$ small enough. Indeed,  using \eqref{eq:espThomas}, we deduce that for all $x\in \overline \Omega$, 
 $$|g_k^{\bm \varepsilon}(x)| \le |R_k^0(x)|+   \sum_{j=1,j\neq k}^N \big (\, \big | \ln|x^{(j)}-x^{(k)}| \big | +  |R_j^0(x^{(k)}) | \, \big ),$$
 where we recall that $R_k^0 \in \mathcal{C}^\infty(\overline{\Omega})$.  
On the other hand,  for any $x\in \overline\Omega_{\bm \varepsilon}$, we have $ |x-x^{(k)}|\ge K_k^{\varepsilon_k}$ and therefore, using also that $ |x-x^{(k)}|\le C$ for some $C>0$ independent of $x\in \overline{\Omega}$, we get that:
$$\lim_{|\bm \varepsilon| \to 0} K_k^{\varepsilon_k}\big [\ln|x-x^{(k)}| + g_k^{\bm \varepsilon}(x)\big ]=0, \ \text{         uniformly in $x\in \overline\Omega_{\bm \varepsilon}$. }$$
Hence, it follows from \eqref{eq:thmE} that for all  $x\in\overline\Omega_{\bm \varepsilon}$, we have as $|\bm{\varepsilon}|\to 0$ and uniformly in $x\in \overline\Omega_{\bm \varepsilon}$: 
    \be\label{eq:thmE2}
    \E_x\big[\tau_{\overline\Sigma_\ddd^{\bm \varepsilon}}\big ] = \frac{|\Omega|}{\pi\sum_{k=1}^{N}K_k^{\varepsilon_k}} \Big[1 +  \sum_{k=1}^{N} O ( K_k^{\varepsilon_k}|\ln K_k^{\varepsilon_k}| )+ O (|\mathbf K^{\bm \varepsilon}|^{2-\delta})\Big].
        \ee 
        The two error terms in \eqref{eq:thmE2} cannot be compared without imposing additional assumptions on the relative rates at which the window sizes $\varepsilon_k$ vanish. Nonetheless, within quite general frameworks (such as the linear and polynomial scaling regimes considered in Corollaries \ref{cor:1} and \ref{cor:2}), we have $O(K_k^{\varepsilon_k}|\ln K_k^{\varepsilon_k}| ) + O(|\mathbf{K}^{\bm{\varepsilon}}|^{2-\delta} ) = O(K_k^{\varepsilon_k}|\ln K_k^{\varepsilon_k}| )$. 
            Note that at   fixed window sizes, the leading-order term in  \eqref{eq:thmE2}  decreases as the number of escape windows increases, as physically expected. 
Let us also emphasize that  if  $x$ is in a   fixed open set $D$ such that $\overline{D} \subset \overline{\Omega} \setminus \{x^{(1)}, \ldots, x^{(N)}\}$,
the error term  in \eqref{eq:thmE2} is $O(|\mathbf{K}^{\bm{\varepsilon}}|)$. 
\vspace{0.2cm}

\item[\textbf{(r4)}]  The asymptotic expansion \eqref{eq:thmE} reveals a so-called \emph{leveling result} for the mean escape time, namely, as the escape window sizes approach zero, the leading-order mean escape time becomes independent of the initial position. More precisely, 
we have    as  $|\bm \varepsilon|\to 0$ and  uniformly in   $x,y\in\overline\Omega_{\bm \varepsilon}$, 
$$\frac{\E_x\big[\tau_{\overline\Sigma_\ddd^{\bm \varepsilon}}\big ] }{\E_y\big[\tau_{\overline\Sigma_\ddd^{\bm \varepsilon}}\big ] } = 1 +  \sum_{k=1}^{N} O ( K_k^{\varepsilon_k}|\ln K_k^{\varepsilon_k}| ) + O (|\mathbf K^{\bm \varepsilon}|^{2-\delta}).
$$
 The same phenomena is observed for the escape location probabilities, see the discussion after Corollary \ref{cor:2}. 
 \vspace{0.2cm}

\item[\textbf{(r5)}]  We finally mention that in the case where all the escape windows have the same size,  the  formula \eqref{eq.Cor1} was first derived in \cite{SiScHoEi06,SiScHo06,PiWaPeKo10} and in~\cite[Theorem 3.6]{AmKaLe09}.   
     
     \end{enumerate}
          
 Our second main results concern the escape location probabilities when $X_0=x\in \overline\Omega_{\bm \varepsilon}$ in the limit where all the escape windows $\Sigma^j_\ddd$ shrink to $0$  (i.e. when $|\bm \varepsilon|\to 0$).

\begin{theorem}\label{thm:2}
    Let $i\in\lb1,N\rb$. {Then,   for all  $\delta\in (0,2)$, the following asymptotic expansion holds as  $|\bm \varepsilon|\to 0$, uniformly in   $x\in\overline\Omega_{\bm \varepsilon}$:}
    \be\label{eq:thmP}
    \begin{split} 
        \P_x\big[X_{\tau_{\overline\Sigma_\ddd^{\bm \varepsilon}}}\in\overline\Sigma_\ddd^{i, \varepsilon_i}\big] &= \frac{K^{\varepsilon_i}_i}{\sum_{k=1}^{N}K_k^{\varepsilon_k}} + \frac{K^{\varepsilon_i}_i}{(\sum_{k=1}^{N}K_k^{\varepsilon_k})^2}\sum_{k=1}^N\sum_{j\notin\{i,k\}}K_k^{\varepsilon_k}K_j^{\varepsilon_j}\ln\frac{|x^{(k)}-x^{(i)}|}{|x^{(k)}-x^{(j)}|}\\
        &\quad+\frac{K^{\varepsilon_i}_i}{(\sum_{k=1}^{N}K_k^{\varepsilon_k})^2}\sum_{k=1}^NK_k^{\varepsilon_k}(K_k^{\varepsilon_k}-K_i^{\varepsilon_i})\ln|x^{(i)}-x^{(k)}|\\
        &\quad+ \frac{K^{\varepsilon_i}_i}{\sum_{k=1}^{N}K_k^{\varepsilon_k}}\sum_{k=1}^NK_k^{\varepsilon_k}\ln\frac{|x-x^{(k)}|}{|x-x^{(i)}|}\\
        &\quad+ \frac{K^{\varepsilon_i}_i}{(\sum_{k=1}^{N} K_k^{\varepsilon_k})^2}\sum_{j,k=1}^N K_j^{\varepsilon_j}K_k^{\varepsilon_k}g_{i,j,k}(x) + O(|\mathbf K^{\bm \varepsilon}|^{2-\delta}),
    \end{split}
    \ee
   where for all every $i,j,k\in\lb1,N\rb$,  $g_{i,j,k}(x)$ is given by either of the two formulas for $x\in \overline \Omega$:
    \be\label{eq:probaThomas}
    g_{i,j,k}(x) = R_i^0(x^{(k)}) - R_j^0(x^{(k)}) + R_j^0(x) - R_i^0(x)
    \ee 
    or
    \be\label{eq:probaBoris}
    g_{i,j,k}(x) = \pi R_{\partial\Omega}(x^{(i)},x^{(k)}) - \pi R_{\partial\Omega}(x^{(j)},x^{(k)}) + \pi R_{\partial\Omega}(x,x^{(j)}) - \pi R_{\partial\Omega}(x,x^{(i)}).
    \ee
\end{theorem}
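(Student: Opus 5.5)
We follow the approximation method (M1). By Feynman--Kac, $u_i(x)=\P_x[X_{\tau}\in\overline\Sigma_\ddd^{i,\varepsilon_i}]$ is the weak solution of the Zaremba problem: $\Delta u_i=0$ in $\Omega$, $u_i=\delta_{i,j}$ on $\Sigma_\ddd^{j,\varepsilon_j}$, and $\partial_n u_i=0$ on $\Sigma_\nnn^{\bm\varepsilon}$. We look for an approximation of the form
\[
\tilde u_i(x)=c_0+\sum_{k=1}^N a_k\bigl(\ln|x-x^{(k)}|+R_k^0(x)\bigr)+\text{(local corrections near each $x^{(k)}$)} .
\]
The corrections are the local profiles of \cite{CaLeNo26}: near $x^{(k)}$ we use the explicit harmonic function of the half-plane problem with Dirichlet data on $(-\varepsilon_k,\varepsilon_k)$. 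Its behaviour at infinity is $\ln|x-x^{(k)}|-\ln(\varepsilon_k/2)$, which is exactly where the constant $K_k^{\varepsilon_k}$ comes from. Since $\Delta(\ln|x-x^{(k)}|+R_k^0)=-\pi/|\Omega|$, harmonicity of $\tilde u_i$ forces the constraint $\sum_k a_k=0$. The Neumann condition off the windows holds by construction of $R_k^0$, up to curvature errors localized near the windows.

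The constants are fixed by the Dirichlet conditions. On $\Sigma_\ddd^{j,\varepsilon_j}$ the approximation is, to leading order,
\[
c_0+a_j\ln(\varepsilon_j/2)+\sum_{k\ne j}a_k\bigl(\ln|x^{(j)}-x^{(k)}|+R_k^0(x^{(j)})\bigr)+a_jR_j^0(x^{(j)}) ,
\]
where $R_j^0(x^{(j)})=0$. Requiring this to equal $\delta_{i,j}$ gives an $(N+1)\times(N+1)$ linear system in $(c_0,a_1,\dots,a_N)$ together with $\sum_k a_k=0$. Write $a_j=K_j^{\varepsilon_j}(c_0-\delta_{i,j})+K_j^{\varepsilon_j}\sum_{k\ne j}a_k M_{jk}$ with $M_{jk}=\ln|x^{(j)}-x^{(k)}|+R_k^0(x^{(j)})$. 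We solve this by a Neumann series in $\mathbf K^{\bm\varepsilon}$ to second order. At first order, $c_0=K_i/\sum_kK_k$ and $a_j=K_j(c_0-\delta_{ij})$. At second order we obtain the double-sum terms in \eqref{eq:thmP}. Evaluating $\tilde u_i(x)$ then produces the term $\sum_k a_k\ln|x-x^{(k)}|$, which yields the fourth line, and the $R^0$ terms, which yield the $g_{i,j,k}$ in \eqref{eq:probaThomas}.

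The terms are reorganized into the displayed form using symmetrization and $\sum_k a_k=0$. The identity $R_k^0(x^{(j)})+\ln|x^{(j)}-x^{(k)}|$ is not symmetric in $(j,k)$, which is why the combination $g_{i,j,k}$ appears rather than a symmetric kernel. The equivalence with \eqref{eq:probaBoris} follows by comparing $R_k^0$ with $\pi R_{\partial\Omega}(\cdot,x^{(k)})$. Their difference solves a Neumann problem with data $-\pi/|\Omega|$ in $\Omega$ and $-\pi/|\partial\Omega|$ on the boundary, so it is independent of $k$ up to an additive constant. These constants cancel in the alternating combination defining $g_{i,j,k}$, using the symmetry of $R_{\partial\Omega}$ from Lemma \ref{le.sym} and Lemma \ref{lem:symmetry}.

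The main obstacle is the rigorous error estimate $\sup_{\overline\Omega_{\bm\varepsilon}}|u_i-\tilde u_i|=O(|\mathbf K^{\bm\varepsilon}|^{2-\delta})$. The difference $e=u_i-\tilde u_i$ solves a mixed problem with small Dirichlet residuals $O(|\mathbf K|^2)$ (from truncating the series and from variation of $R_k^0$ along a window of size $\varepsilon_k$). It also has Neumann residuals supported near the windows. Its interior forcing vanishes, since $\tilde u_i$ is harmonic.

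To control $e$ we would use the maximum principle for the Dirichlet part. For the Neumann residual we would use a duality or Green argument against the mean escape time function of Theorem \ref{thm:1}, which is $O(1/|\mathbf K|)$: a Neumann flux of size $\eta$ on the reflecting boundary contributes at most $\eta\,\E_x[\tau]$-type quantities. This argument requires the maximal gradient regularity results (Brown, Gröger, Savaré) to justify Feynman--Kac and integration by parts near the window endpoints, where $u_i\notin H^2$.

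The loss $|\mathbf K|^{-\delta}$ arises when $x$ approaches the windows, at distance $\ge K_k^{\varepsilon_k}$. There $\ln|x-x^{(k)}|$ is as large as $|\ln K_k|$, and the local profile corrections decay only like $\varepsilon_k/|x-x^{(k)}|$. One must check that these are absorbed into $|\mathbf K|^{2-\delta}$ uniformly; this is the step we would verify most carefully.
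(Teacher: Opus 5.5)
Your strategy is correct. Like the paper's approximation method (M1), you build an exactly harmonic combination $c_0+\sum_k a_k f_k$ with $\sum_k a_k=0$ and compare it with $\mathbf p_i^{\bm\varepsilon}$ through the Feynman--Kac representation. Two ingredients differ.

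\textbf{How the coefficients are found.} You fix $(c_0,a_1,\dots,a_N)$ by solving the $(N+1)\times(N+1)$ system directly. The paper instead takes $\psi_i^{\bm\varepsilon}$ to be the normalized difference $\phi_i^{\bm\varepsilon}-\phi^{\bm\varepsilon}$ of two mean-escape-time approximants, with and without window $i$. This gives the zero-sum structure and the Dirichlet data $\indic_{\Sigma_\ddd^{i,\varepsilon_i}}+O(|\mathbf K^{\bm\varepsilon}|^2)$ directly from Proposition~\ref{prop:phi}. Expanding your system to second order reproduces exactly the paper's $\mathscr P_i(\mathbf K^{\bm\varepsilon})(x)$ and $\mathscr Q_i(\mathbf K^{\bm\varepsilon})$ in \eqref{eq:pieceThomas1}. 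Your route is more transparent; the paper's reuses the Theorem~\ref{thm:1} construction.

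\textbf{How the Neumann condition is handled.} You keep the fixed corrector $R_k^0$ and accept a Neumann residual. The paper uses the $\bm\varepsilon$-dependent corrector $R_k^{\varepsilon_k}$ of \eqref{eq:Rkepd}, so the Neumann condition holds exactly and only the maximum principle on Dirichlet data is needed. The paper pays with the $W^{1,3}$ estimate $R_k^{\varepsilon_k}=R_k^0+O(\varepsilon_k^{1/3})$ behind Lemma~\ref{le.Pes}. You pay with a boundary local-time bound.

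That local-time bound is where you must be quantitative. Your estimate $|e(x)|\le\sup|\text{Dirichlet residual}|+\eta\,\E_x[\ell_\tau]$ is sound. It\^o's formula applied to the smooth function $\theta$ with $\Delta\theta=|\partial\Omega|/|\Omega|$ and $\partial_n\theta=1$ gives $\E_x[\ell_\tau]\le\frac{|\partial\Omega|}{|\Omega|}\E_x[\tau]+2\|\theta\|_\infty=O(|\mathbf K^{\bm\varepsilon}|^{-1})$. However, this only helps if $\eta=o(|\mathbf K^{\bm\varepsilon}|^{3})$.

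That condition holds if your ``local profile'' is the logarithmic potential $S_k^{\varepsilon_k}$ of \eqref{eq.Ske}, with its density carried by the curved arc. It would not hold for a half-plane solution transplanted through a chart and glued in with a spatial cutoff.
\begin{itemize}
\item With $S_k^{\varepsilon_k}$, the combination $c_0+\sum_k a_k(S_k^{\varepsilon_k}+R_k^0)$ is exactly harmonic.
\item On $\Sigma_\nnn^{\bm\varepsilon}$ its normal derivative is $\sum_k a_k\big(\partial_n S_k^{\varepsilon_k}-\partial_n\ln|\cdot-x^{(k)}|\big)=O(|\bm\varepsilon|\,|\mathbf K^{\bm\varepsilon}|)$. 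This holds because $(\zeta,y)\mapsto(\zeta-y)\cdot n(\zeta)/|\zeta-y|^2$ is smooth on $\partial\Omega\times\partial\Omega$ and $|\psi_k(t)-x^{(k)}|\le C\varepsilon_k$.
\item So the residual is global but tiny, not ``localized near the windows'', and it contributes only $O(|\bm\varepsilon|)$.
\item A chart-and-cutoff profile would instead create interior forcing and $O(1)$ Neumann errors near the windows, and the crude $\eta\,\E_x[\tau]$ bound would fail.
\end{itemize}

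The other checks you flag are routine. Your proof that \eqref{eq:probaThomas} and \eqref{eq:probaBoris} agree is the paper's argument in slightly different form.
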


 We now comment the results of Theorem \ref{thm:2}.
\begin{enumerate}
 \item[\textbf{(R1)}] Formula \eqref{eq:thmP} provides a general asymptotic framework showing that the escape location for $X_0=x$ depends on the relative positioning of the starting point $x$ and the various window centers $x^{(k)}$.
 \vspace{0.2cm}
 
 \item[\textbf{(R2)}]  
The result of Theorem \ref{thm:2}  can be pushed forward by giving a so-called   expansions in powers of $|\mathbf K^{\bm \varepsilon}|$ up to an error of order $|\mathbf K^{\bm \varepsilon}|^{q-\delta}$ for any $q \ge 2$ of the espace location probabilities. Indeed, within the approximation method, this is achieved by constructing higher-order approximating functions, following the approach of \cite{CaLeNo26}, see Remark \ref{re.am}. In the Green method, such expansions are obtained by developing the asymptotic expansions of key quantities appearing in Section \ref{sec:thmBoris}. A similar remark also holds for Theorem  \ref{thm:1}. 
 \vspace{0.2cm}

 \item[\textbf{(R3)}]   The remainder term $O(|\mathbf K^{\bm \varepsilon}|^{2-\delta})$ in \eqref{eq:thmP} can be refined as $O(|\mathbf K^{\bm \varepsilon}|^2)$  if one restricts  the initial conditions $x$ in a   fixed open set $D$ such that $\overline{D} \subset \overline{\Omega} \setminus \{x^{(1)}, \ldots, x^{(N)}\}$. 
  \vspace{0.2cm}
 
 \item[\textbf{(R4)}]  
Unlike the expression for the expected exit time in \eqref{eq:thmE}, the formula   \eqref{eq:thmP} does not yield a direct asymptotic equivalent for the escape location  probabilities   without further constraints on  the relative rates at which the window sizes $\varepsilon_k$ vanish.  
We provide sufficiently general  assumptions  to  extract from \eqref{eq:thmE} both a leveling result and precise   asymptotic  equivalents  of  the escape location probabilities. This is the purposes of  Corollaries  \ref{cor:3}, \ref{cor:1}, and \ref{cor:2} below.   
\end{enumerate}


\begin{corollary}\label{cor:3}
Let $i\in\lb1,N\rb$.   Assume that there is $\delta\in (0,2)$ such that  when  $|\bm{\varepsilon}| \to 0$:
\begin{equation}\label{eq:assH}
\gamma_i^{\bm \varepsilon}(\delta) :=  \frac{ |\mathbf{K}^{\bm{\varepsilon}}|^{2-\delta}\sum_{k=1}^N K_k^{\varepsilon_k} }{ K_i^{\varepsilon_i}}  \to 0.  
\end{equation} 
Then,  
we have as $|\bm{\varepsilon}| \to 0$ and uniformly in  $x  \in \overline{\Omega}_{\bm{\varepsilon}}$:  
  $$\P_x\big[X_{\tau_{\overline\Sigma_\ddd^{\bm \varepsilon}}}\in\overline\Sigma_\ddd^{i, \varepsilon_i}\big]= \frac{K_i^{\varepsilon_i} }{ \sum_{k=1}^N K_k^{\varepsilon_k}} \Big [ 1  + \sum_{k=1}^N O\big (\mu_{k,i}^{\bm{\varepsilon}}\big )   + O(\gamma_i^{\bm \varepsilon}(\delta)) \Big],$$
where  $\mu_{k,i}^{\bm{\varepsilon}}:=K_k^{\varepsilon_k}| \ln (K_k^{\varepsilon_k}  K_i^{\varepsilon_i})| \to 0$. In addition,   as $|\bm{\varepsilon}| \to 0$ and uniformly in  $x, y \in \overline{\Omega}_{\bm{\varepsilon}}$: 
\be\label{eq.LR}
  \frac{\P_x\big[X_{\tau_{\overline\Sigma_\ddd^{\bm \varepsilon}}}\in\overline\Sigma_\ddd^{i, \varepsilon_i}\big]}{\P_y\big[X_{\tau_{\overline\Sigma_\ddd^{\bm \varepsilon}}}\in\overline\Sigma_\ddd^{i, \varepsilon_i}\big]} =1 + \sum_{k=1}^N O\big (\mu_{k,i}^{\bm{\varepsilon}}\big )   + O(\gamma_i^{\bm \varepsilon}(\delta)), 
\ee 
  \end{corollary}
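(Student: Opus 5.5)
The plan is to derive the corollary directly from Theorem \ref{thm:2} by factoring out the leading term $K_i^{\varepsilon_i}/S$, where $S:=\sum_{k=1}^N K_k^{\varepsilon_k}$. After factoring, we bound each remaining term of \eqref{eq:thmP} relative to $1$. Write \eqref{eq:thmP} as $\frac{K_i^{\varepsilon_i}}{S}[1+T_1+T_2+T_3+T_4] + O(|\mathbf K^{\bm\varepsilon}|^{2-\delta})$. Here $T_1,T_2,T_4$ are the double sums divided by $S$, and $T_3=\sum_k K_k^{\varepsilon_k}\ln\frac{|x-x^{(k)}|}{|x-x^{(i)}|}$. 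The remainder equals $\frac{K_i^{\varepsilon_i}}{S}\,O(\gamma_i^{\bm\varepsilon}(\delta))$ by the very definition \eqref{eq:assH}. It therefore suffices to show $|T_1|+|T_2|+|T_3|+|T_4| = \sum_k O(\mu_{k,i}^{\bm\varepsilon})$ uniformly in $x\in\overline\Omega_{\bm\varepsilon}$.

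\emph{Step 1: the $x$-independent terms.} The points $x^{(k)}$ are distinct and fixed, so all the logarithms $\ln|x^{(k)}-x^{(j)}|$ ($k\neq j$) appearing in $T_1,T_2$ are bounded constants. The diagonal terms of $T_2$ vanish, since $K_k^{\varepsilon_k}-K_i^{\varepsilon_i}=0$ for $k=i$ and the logarithm is only evaluated at $k\neq i$. We then use $K_j^{\varepsilon_j}/S\le 1$ and $|K_k^{\varepsilon_k}-K_i^{\varepsilon_i}|/S\le 1$, which give $|T_1|+|T_2|\le C\sum_k K_k^{\varepsilon_k}$. Similarly, $g_{i,j,k}$ given by \eqref{eq:probaThomas} is bounded on $\overline\Omega$ because $R_\ell^0\in\mathcal C^\infty(\overline\Omega)$. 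Hence $|T_4|\le C\sum_{j,k}K_j^{\varepsilon_j}K_k^{\varepsilon_k}/S\le C\sum_k K_k^{\varepsilon_k}$. Finally $K_k^{\varepsilon_k}\le \mu_{k,i}^{\bm\varepsilon}$ as soon as $K_k^{\varepsilon_k}K_i^{\varepsilon_i}\le e^{-1}$, which holds for $|\bm\varepsilon|$ small.

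\emph{Step 2: the term $T_3$, which is the only delicate one.} For $x\in\overline\Omega_{\bm\varepsilon}$ we have $K_\ell^{\varepsilon_\ell}\le|x-x^{(\ell)}|\le \operatorname{diam}\Omega$ for all $\ell$. This gives $|\ln|x-x^{(\ell)}||\le C+|\ln K_\ell^{\varepsilon_\ell}|$. Therefore
\[
|T_3|\le \sum_k K_k^{\varepsilon_k}\big(2C+|\ln K_k^{\varepsilon_k}|+|\ln K_i^{\varepsilon_i}|\big)\le C'\sum_k K_k^{\varepsilon_k}\,|\ln(K_k^{\varepsilon_k}K_i^{\varepsilon_i})|.
\]
This uses that both logarithms are negative for small $\bm\varepsilon$, so their absolute values add. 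This is exactly $\sum_k O(\mu_{k,i}^{\bm\varepsilon})$, and it explains the form of $\mu_{k,i}^{\bm\varepsilon}$. We should also check $\mu_{k,i}^{\bm\varepsilon}\to0$. The part $K_k^{\varepsilon_k}|\ln K_k^{\varepsilon_k}|\to0$ is trivial. For the cross part $K_k^{\varepsilon_k}|\ln K_i^{\varepsilon_i}|$, assumption \eqref{eq:assH} gives $K_i^{\varepsilon_i}\ge |\mathbf K^{\bm\varepsilon}|^{2-\delta}K_k^{\varepsilon_k}/\gamma_i^{\bm\varepsilon}\ge |\mathbf K^{\bm\varepsilon}|^{3}$ eventually. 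Hence $|\ln K_i^{\varepsilon_i}|\le 3|\ln|\mathbf K^{\bm\varepsilon}||$, and $K_k^{\varepsilon_k}|\ln|\mathbf K^{\bm\varepsilon}||\le |\mathbf K^{\bm\varepsilon}|\,|\ln|\mathbf K^{\bm\varepsilon}||\to0$. This step is the main point to handle carefully: without \eqref{eq:assH}, $K_i^{\varepsilon_i}$ could be super-polynomially small relative to the others, and $\mu_{k,i}^{\bm\varepsilon}$ would fail to vanish.

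\emph{Step 3: the leveling statement.} Applying the first assertion at $x$ and at $y$ gives, for each point, the common leading factor $K_i^{\varepsilon_i}/S$ times $1+o(1)$, where the $o(1)$ is of the stated form uniformly. Taking the quotient and using $(1+a)/(1+b)=1+O(|a|+|b|)$ for $a,b\to0$ yields \eqref{eq.LR}, uniformly in $x,y\in\overline\Omega_{\bm\varepsilon}$.
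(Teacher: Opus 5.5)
Your proposal is correct and follows the paper's proof. Both arguments split \eqref{eq:thmP} into the $x$-independent part, the $\ln\frac{|x-x^{(k)}|}{|x-x^{(i)}|}$ part, the $g_{i,j,k}$ part and the remainder, and factor out $K_i^{\varepsilon_i}/\sum_k K_k^{\varepsilon_k}$. They then bound the logarithmic term by $\sum_k O(\mu_{k,i}^{\bm\varepsilon})$ using $|x-x^{(k)}|\ge K_k^{\varepsilon_k}$ on $\overline\Omega_{\bm\varepsilon}$, absorb the $O(|\mathbf K^{\bm\varepsilon}|)$ terms into $\sum_k O(\mu_{k,i}^{\bm\varepsilon})$, rewrite the remainder as $O(\gamma_i^{\bm\varepsilon}(\delta))$, and take the quotient for \eqref{eq.LR}. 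Your explicit checks that $K_k^{\varepsilon_k}\le\mu_{k,i}^{\bm\varepsilon}$ and that $\mu_{k,i}^{\bm\varepsilon}\to0$ under \eqref{eq:assH} fill in details the paper only asserts; for the lower bound on $K_i^{\varepsilon_i}$, use $\sum_k K_k^{\varepsilon_k}\ge|\mathbf K^{\bm\varepsilon}|$ rather than a single $K_k^{\varepsilon_k}$.
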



The proof of the Corollary \ref{cor:3} is performed in Section \ref{sec:cor}. 
Notice that \eqref{eq:assH} indeed  implies that   $\mu_{k,i}^{\bm{\varepsilon}}$   goes to $0$ as $|\bm{\varepsilon}|\to 0$.
In full generality, even if $\gamma_i^{\bm{\varepsilon}}(\delta) \to 0$, the error terms $O (\mu_{k,i}^{\bm{\varepsilon}} )$ and $O (\gamma_i^{\bm{\varepsilon}}(\delta) )$ cannot be compared  without further constraints on  the relative rates at which the window sizes $\varepsilon_k$ vanish. 
Assumption  \eqref{eq:assH} formalizes the requirement that the $i$-th escape window must not be excessively small compared to the others.
In connection with Remark \textbf{(R2)} above, it is worth noting that assumption \eqref{eq:assH} can be weakened   if one extends the asymptotic expansion in powers of $|\mathbf{K}^{\bm{\varepsilon}}|$ to higher orders than $2$  in Theorem \ref{thm:2}.  
  In addition, note that \eqref{eq.LR} provides a leveling result for the escape location probabilities:   the relative impact of the starting positions  vanishes at leading order.     
It is worth noting that, even under the assumptions of Corollary \ref{cor:3}, the   ratio $K_i^{\varepsilon_i}/\sum_{k=1}^N K_k^{\varepsilon_k}$ is not guaranteed to converge as $|\bm{\varepsilon}| \to 0$.
%

 Finally, we include a remark that points toward some interesting questions for future work.  In the regime where a specific escape window size $\varepsilon_i$ is negligible compared to the remaining sizes, more precisely, when $K_i^{\bm{\varepsilon}} = O(|\mathbf K^{\bm{\varepsilon}}|^\infty)$, an  equivalent  for  the escape probability through 
$\Sigma_\ddd^{i, \varepsilon_i}$ 
  cannot be established by means of the formula \eqref{eq:thmP}. Under these conditions, the Taylor expansion of the hitting probability in powers of $|K^{\bm{\varepsilon}}|$ vanishes identically. Consequently,  it would be interesting    to capture the hidden, smaller corrections that fall outside the reach of the techniques used in this work.

\subsection{Linear and polynomial regimes}
\label{sec.LP}

In this section, we apply Theorem \ref{thm:2} to derive precise asymptotic equivalents for the escape location probabilities under two regimes. Specifically, we examine configurations where the window sizes $\varepsilon_k$ exhibit either linear or polynomial scaling relative to one another. In these cases, assumption \eqref{eq:assH} is satisfied but  we can be much more precise in the order remaining  terms than those  given in Corollary \ref{cor:3}. 
Let us also mention that while we focus here on the escape location probabilities, we note that the mean escape time given by Theorem \ref{thm:1} can be computed in an analogous fashion under these exact same regimes. As the proofs demonstrate, these scenarios serve as illustrative examples; our framework is sufficiently flexible to accommodate other regimes, such as e.g. a combination of these two scaling regimes. The proofs of the following corollaries are performed in Section~\ref{sec:cor}. 

\begin{corollary}\label{cor:1} (Linear regime)
    Let $i\in\lb1,N\rb$. Assume there exists $(\alpha_k)_{2\leq k\leq N}\subset\R_+^*$ such that for all $k\in\lb2,N\rb$, $\varepsilon_k = \alpha_k\varepsilon_1$. Then, {for all  $\delta\in (0,2)$, the following asymptotic expansion holds as  $|\bm \varepsilon|\to 0$, uniformly in   $x\in\overline\Omega_{\bm \varepsilon}$:}
    \be\label{eq.Cor1}
    \begin{split}
        \P_x\big[X_{\tau_{\overline\Sigma_\ddd^{\bm \varepsilon}}}\in\overline\Sigma_\ddd^{i, \varepsilon_i}\big] &= \frac1N + \frac{1}{N^2\ln\varepsilon_1}\sum_{k=1}^N\ln\frac{\alpha_k\prod_{j\neq k}|x^{(j)}-x^{(k)}|}{\alpha_i\prod_{j\neq i}|x^{(j)}-x^{(i)}|}\\
        &\quad- \frac{1}{N\ln\varepsilon_1}\sum_{k=1}^N\ln\frac{|x-x^{(k)}|}{|x-x^{(i)}|} - \frac{1}{N^2\ln\varepsilon_1}\sum_{j,k=1}^Ng_{i,j,k}(x) + \frac{O_{L^\infty(\Omega_{\bm \varepsilon})}(1)}{|\ln\varepsilon_1|^{2-\delta}},
    \end{split}
    \ee
    where we recall that $g_{i,j,k}$ is given by \eqref{eq:probaThomas} or \eqref{eq:probaBoris}, and where we set $\alpha_1=1$ for consistency.
\end{corollary}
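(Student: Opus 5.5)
Corollary \ref{cor:1} follows from Theorem \ref{thm:2} by expanding each $K_k^{\varepsilon_k}$ in powers of $L:=1/|\ln\varepsilon_1|$ under the assumption $\varepsilon_k=\alpha_k\varepsilon_1$. Since $\varepsilon_1\to0$, we have $\ln(\varepsilon_k/2)=\ln\varepsilon_1+\ln(\alpha_k/2)<0$. Writing $c_k:=\ln(\alpha_k/2)$, this gives
\[
K_k^{\varepsilon_k}=\frac{1}{|\ln\varepsilon_1|-c_k}=L+c_kL^2+O(L^3),
\]
uniformly in $k$. In particular every $K_k^{\varepsilon_k}$ is of order $L$ and $|\mathbf K^{\bm\varepsilon}|\asymp L$. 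Hence $O(|\mathbf K^{\bm\varepsilon}|^{2-\delta})=O(L^{2-\delta})$, which is the announced remainder since $L^{2-\delta}=|\ln\varepsilon_1|^{-(2-\delta)}$.

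We then expand each term of \eqref{eq:thmP} up to $O(L^{2-\delta})$.

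\textbf{First term.} We have $\sum_k K_k^{\varepsilon_k}=NL+(\sum_k c_k)L^2+O(L^3)$. Therefore
\[
\frac{K_i^{\varepsilon_i}}{\sum_k K_k^{\varepsilon_k}}=\frac1N+\frac{L}{N^2}\Big(Nc_i-\sum_k c_k\Big)+O(L^2).
\]
The constant $\ln 2$ cancels, leaving $\frac{L}{N^2}\sum_k\ln(\alpha_i/\alpha_k)$. Because $L=-1/\ln\varepsilon_1$, this equals $\frac{1}{N^2\ln\varepsilon_1}\sum_k\ln(\alpha_k/\alpha_i)$.

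\textbf{Remaining explicit terms.} In the second, third and fifth terms of \eqref{eq:thmP}, the prefactor $K_i/(\sum K)^2$ multiplied by a double sum of products $K K$ is $O(L)$. So it suffices to replace every $K$ by $L$, at a cost of $O(L^2)$ times bounded quantities. The logarithms of distances between the distinct points $x^{(k)}$ are fixed constants, and $g_{i,j,k}$ is bounded on $\overline\Omega$, as noted in \textbf{(r3)}.
\begin{itemize}
\item The third term vanishes at order $L$, since $K_k-K_i=O(L^2)$; it is therefore absorbed into $O(L^2)$.
\item The second term becomes $\frac{L}{N^2}\sum_k\sum_{j\notin\{i,k\}}\ln\frac{|x^{(k)}-x^{(i)}|}{|x^{(k)}-x^{(j)}|}$.
\item The fifth term becomes $\frac{L}{N^2}\sum_{j,k}g_{i,j,k}(x)$, which equals $-\frac{1}{N^2\ln\varepsilon_1}\sum_{j,k}g_{i,j,k}(x)$.
\end{itemize}

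The main point requiring care is the fourth, $x$-dependent term. The quantity $\ln\frac{|x-x^{(k)}|}{|x-x^{(i)}|}$ is not bounded uniformly on $\overline\Omega_{\bm\varepsilon}$. Since $|x-x^{(k)}|\ge K_k^{\varepsilon_k}\gtrsim L$, it is only $O(|\ln L|)$. Replacing $K_i/\sum_m K_m$ by $1/N$ and $K_k$ by $L$ therefore costs $O(L^2|\ln L|)=O(L^{2-\delta})$ for every $\delta>0$, which is still acceptable. This yields $-\frac{1}{N\ln\varepsilon_1}\sum_k\ln\frac{|x-x^{(k)}|}{|x-x^{(i)}|}$, uniformly in $x$.

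Finally, we check that the constant parts combine to the product form in \eqref{eq.Cor1}. The second term contributes
\[
\sum_k\sum_{j\notin\{i,k\}}\Big(\ln|x^{(k)}-x^{(i)}|-\ln|x^{(k)}-x^{(j)}|\Big).
\]
Set $P_k:=\sum_{j\neq k}\ln|x^{(j)}-x^{(k)}|$. The first part, $\sum_{k\ne i}(N-2)\ln|x^{(k)}-x^{(i)}|$, equals $(N-2)P_i$. For the second part, $\sum_{k}\sum_{j\notin\{i,k\}}\ln|x^{(k)}-x^{(j)}|=\sum_k P_k-2P_i$. Altogether the contribution is $NP_i-\sum_kP_k=\sum_k(P_i-P_k)$, and multiplying by $L=-1/\ln\varepsilon_1$ gives $\frac{1}{N^2\ln\varepsilon_1}\sum_k(P_k-P_i)$.

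Adding the contribution from the first term, $\frac{1}{N^2\ln\varepsilon_1}\sum_k\ln(\alpha_k/\alpha_i)$, gives exactly $\frac{1}{N^2\ln\varepsilon_1}\sum_k\ln\frac{\alpha_k\prod_{j\ne k}|x^{(j)}-x^{(k)}|}{\alpha_i\prod_{j\ne i}|x^{(j)}-x^{(i)}|}$. All remaining errors are $O(L^{2-\delta})$ uniformly on $\overline\Omega_{\bm\varepsilon}$, which is \eqref{eq.Cor1}.
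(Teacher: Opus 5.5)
Your proof is correct and is essentially the paper's argument. The paper likewise expands $K_k^{\varepsilon_k}=-1/\ln\varepsilon_1+\ln(\alpha_k/2)/|\ln\varepsilon_1|^2+O(|\ln\varepsilon_1|^{-3})$ (its general expansion with $a_k=-1$, $b_k=\ln(\alpha_k/2)$), uses $K_k^{\varepsilon_k}-K_i^{\varepsilon_i}=O(|\ln\varepsilon_1|^{-2})$ to discard the third term, substitutes into Theorem~\ref{thm:2}, and controls the $x$-dependent logarithms through the bound $O(\ln|\ln\varepsilon_1|)$ on $\overline\Omega_{\bm\varepsilon}$, exactly as you do.

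One bookkeeping remark: in evaluating the second double sum you silently drop the $k=i$ terms. In the printed formula of Theorem~\ref{thm:2} these read $\ln 0$, but in the form $\ln\frac{|x^{(j)}-x^{(i)}|}{|x^{(j)}-x^{(k)}|}$ that the proof actually produces they vanish, so your convention is the right one and your evaluation of the constant terms is correct.
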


 Under this asymptotic regime, the asymptotic escape distribution is independent of the coefficients $\alpha_k$ and converges to a uniform distribution over the escape windows. 
 More precisely if we define the discrete random variable $Y(x)\in \lb1,N\rb$ by 
\begin{equation}\label{eq.Y=law}
Y(x)=i \ \text{ when } \ X_{\tau_{\overline\Sigma_\ddd^{\bm \varepsilon}}}(x)\in\overline\Sigma_\ddd^{i, \varepsilon_i},
\end{equation}
  then uniformly in  $x\in\overline\Omega_{\bm \varepsilon}$, it holds that 
 $$ \lim_{ |\bm \varepsilon|\to 0} Y(x) \overset{\text{in law}}{=} Y_{\text{uni}} ,$$
 where $Y_{\text{uni}}$ is the uniform distribution over  $  \lb1,N\rb$.  
 Moreover, as it is expected,  the formula  \eqref{eq.Cor1} is fully consistent with the fact that it always true that (this could also be directly observed from the general formula \eqref{eq:thmP}):
\begin{equation}\label{eq.sum1}
\sum_{i=1}^N \P_x\big[X_{\tau_{\overline\Sigma_\ddd^{\bm \varepsilon}}}\in\overline\Sigma_\ddd^{i, \varepsilon_i}\big] = 1, \ \forall x\in\overline\Omega .
\end{equation}
Indeed, direct computation shows that the sum of all error terms in \eqref{eq.Cor1} cancels out exactly. Namely, for all $x\in\overline\Omega_{\bm \varepsilon}
$, it holds that:
$$\sum_{i,k=1}^N \ln \frac{\alpha_k \prod_{j \neq k} |x^{(j)} - x^{(k)}|}{\alpha_i \prod_{j \neq i} |x^{(j)} - x^{(i)}|}   = 0, \  \sum_{i,k=1}^N  \ln \frac{|x - x^{(k)}|}{|x - x^{(i)}|}  =0, \text{ and }  \sum_{i,j,k=1}^N  g_{i,j,k}(x) = 0. $$
Note also that the formula  \eqref{eq.Cor1} is expressed in terms of the size of the first escape window, that is,  in terms of $\varepsilon_1$ and with the relative scaling of the other windows w.r.t. $\varepsilon_1$ (namely $\varepsilon_k = \alpha_k\varepsilon_1$). This choice is entirely arbitrary; consequently, formula \eqref{eq.Cor1} must remain invariant regardless of which window size, say the $j_0$-th, is selected as the reference.    This is indeed the case since  we have that for all  $i\in\lb1,N\rb$:
     \bes
  \varepsilon_i= (\alpha_i / \alpha_{j_0}) \varepsilon_{j_0}, \  \frac{1}{\ln\varepsilon_1} = \frac{1}{\ln\varepsilon_{j_0} - \ln \alpha_{j_0}} = \frac{1}{\ln\varepsilon_{j_0}} + \frac{O(1)}{|\ln\varepsilon_{j_0}|^2},\ \text{ and }\ \frac{\alpha_i}{\alpha_k} = \frac{\alpha_i/\alpha_{j_0}}{\alpha_k/\alpha_{j_0}}.
    \ees
    Hence for all $i\in\lb1,N\rb$, as expected, we also deduce that  {for all  $\delta\in (0,2)$,  the following asymptotic expansion holds as  $|\bm \varepsilon|\to 0$, uniformly in   $x\in\overline\Omega_{\bm \varepsilon}$:}
\bes 
    \begin{split}
        \P_x\big[X_{\tau_{\overline\Sigma_\ddd^{\bm \varepsilon}}}\in\overline\Sigma_\ddd^{i, \varepsilon_i}\big] &= \frac1N + \frac{1}{N^2\ln\varepsilon_{j_0}}\sum_{k=1}^N\ln\frac{\frac{\alpha_k}{\alpha_{j_0}}\prod_{j\neq k}|x^{(j)}-x^{(k)}|}{\frac{\alpha_i}{\alpha_{j_0}}\prod_{j\neq i}|x^{(j)}-x^{(i)}|} - \frac{1}{N\ln\varepsilon_{j_0}}\sum_{k=1}^N\ln\frac{|x-x^{(k)}|}{|x-x^{(i)}|}\\
        &\quad- \frac{1}{N^2\ln\varepsilon_{j_0}}\sum_{j,k=1}^Ng_{i,j,k}(x) + \frac{O(1)}{|\ln\varepsilon_{j_0}|^{2-\delta}}.
    \end{split}
    \ees 
    Note that we have used that    in the limit $|\bm \varepsilon|\to 0$  and uniformly in $x\in\overline\Omega_{\bm \varepsilon}$: 
    $$\frac{1}{|\ln\varepsilon_{j_0}|^2} \sum_{k=1}^N\ln\frac{|x-x^{(k)}|}{|x-x^{(i)}|}=  \frac{O(1)}{|\ln\varepsilon_{j_0}|^{2-\delta}}.$$ 
 

\begin{corollary}\label{cor:2}  (Polynomial regime)
    Let $i\in\lb1,N\rb$. Assume there exists $(\beta_k)_{2\leq k\leq N}\subset\R_+^*$ such that for all $k\in\lb2,N\rb$, $\varepsilon_k = \varepsilon_1^{1/\beta_k}$. Then, { for all  $\delta\in (0,2)$, the following asymptotic expansion holds as  $|\bm \varepsilon|\to 0$, uniformly in  $x\in\overline\Omega_{\bm \varepsilon}$:}
    \be \label{eq.Cor2}
    \begin{split}
        \P_x\big[X_{\tau_{\overline\Sigma_\ddd^{\bm \varepsilon}}}\in\overline\Sigma_\ddd^{i, \varepsilon_i}\big] &= \frac{\beta_i}{\sum_{k=1}^N\beta_k} + \frac{1}{\ln\varepsilon_1}\frac{\beta_i}{(\sum_{k=1}^N\beta_k)^2}\sum_{k=1}^N\beta_k(\beta_i - \beta_k)\ln(2|x^{(k)}-x^{(i)}|)\\
        &\quad- \frac{1}{\ln\varepsilon_1}\frac{\beta_i}{(\sum_{k=1}^N\beta_k)^2}\sum_{k=1}^N\sum_{j\notin\{i,k\}}\beta_k\beta_j\ln\frac{|x^{(j)}-x^{(i)}|}{|x^{(j)}-x^{(k)}|}\\
        &\quad- \frac{1}{\ln\varepsilon_1}\frac{\beta_i}{\sum_{k=1}^N\beta_k}\sum_{k=1}^N\beta_k\ln\frac{|x-x^{(k)}|}{|x-x^{(i)}|}\\
        &\quad- \frac{1}{\ln\varepsilon_1}\frac{\beta_i}{(\sum_{k=1}^N\beta_k)^2}\sum_{j,k=1}^N\beta_j\beta_kg_{i,j,k}(x) + \frac{O(1)}{|\ln\varepsilon_1|^{2-\delta}},
    \end{split}
    \ee
    where  we recall that $g_{i,j,k}$ is given by \eqref{eq:probaThomas} or \eqref{eq:probaBoris}, and where we set  $\beta_1=1$ for consistency.
\end{corollary}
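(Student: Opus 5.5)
The plan is to plug the polynomial scaling into Theorem \ref{thm:2} and expand every ratio of the $K_k^{\varepsilon_k}$ in powers of $1/|\ln\varepsilon_1|$. Set $L=|\ln\varepsilon_1|=-\ln\varepsilon_1$. Since $\varepsilon_k=\varepsilon_1^{1/\beta_k}$, we have $\ln(\varepsilon_k/2)=-(L/\beta_k+\ln 2)$. Hence
\[
K_k^{\varepsilon_k}=\frac{\beta_k}{L+\beta_k\ln 2}=\frac{\beta_k}{L}\Big(1-\frac{\beta_k\ln2}{L}+O(L^{-2})\Big).
\]
In particular $|\mathbf K^{\bm\varepsilon}|\asymp L^{-1}$, so the remainder $O(|\mathbf K^{\bm\varepsilon}|^{2-\delta})$ of \eqref{eq:thmP} becomes $O(L^{-(2-\delta)})$. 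Write $S=\sum_k\beta_k$ and $T=\sum_k\beta_k^2$. Then
\[
\sum_k K_k^{\varepsilon_k}=\frac{S}{L}-\frac{T\ln2}{L^2}+O(L^{-3}),
\]
and therefore
\[
\frac{K_i^{\varepsilon_i}}{\sum_kK_k^{\varepsilon_k}}=\frac{\beta_i}{S}\Big(1-\frac{\beta_i\ln 2}{L}+\frac{T\ln2}{SL}\Big)+O(L^{-2}).
\]
The correction term here equals $\frac{\ln2}{L}\frac{\beta_i}{S^2}\sum_k\beta_k(\beta_k-\beta_i)$. This is exactly what converts $\ln|x^{(k)}-x^{(i)}|$ into $\ln(2|x^{(k)}-x^{(i)}|)$ in the second line of \eqref{eq.Cor2}, once we recall $1/\ln\varepsilon_1=-1/L$ and check signs.

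Next I treat the remaining terms of \eqref{eq:thmP}. The two double-sum terms and the $g_{i,j,k}$ term carry prefactors of the form $K_iK_kK_j/(\sum K)^2$ or $K_iK_k(K_k-K_i)/(\sum K)^2$. These are homogeneous of degree one in $\mathbf K^{\bm\varepsilon}$. Replacing each $K_k^{\varepsilon_k}$ by $\beta_k/L$ therefore costs only $O(L^{-2})$, since the functions involved are bounded: $\ln|x^{(k)}-x^{(j)}|$ are constants for $k\ne j$, and $g_{i,j,k}$ is bounded on $\overline\Omega$ by smoothness of $R^0_k$. This produces the third and fifth lines of \eqref{eq.Cor2}. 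For the third line, the sign and orientation of the logarithm ratio must match after relabeling $j\leftrightarrow k$. The plan is to rewrite $\ln\frac{|x^{(k)}-x^{(i)}|}{|x^{(k)}-x^{(j)}|}$ and compare it with $-\ln\frac{|x^{(j)}-x^{(i)}|}{|x^{(j)}-x^{(k)}|}$ summed symmetrically, using $|x^{(k)}-x^{(j)}|=|x^{(j)}-x^{(k)}|$.

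The main obstacle is the $x$-dependent term $\frac{K_i}{\sum K}\sum_kK_k\ln\frac{|x-x^{(k)}|}{|x-x^{(i)}|}$. Its logarithms are unbounded on $\overline\Omega_{\bm\varepsilon}$: they can be as large as $|\ln K_k^{\varepsilon_k}|\asymp\ln L$ near the windows. Replacing $K_k$ by $\beta_k/L$ and $K_i/\sum K$ by $\beta_i/S$ creates errors of order $L^{-2}\cdot\ln L$. This must be absorbed into $O(L^{-(2-\delta)})$, which works because $\ln L=O(L^{\delta})$ for any $\delta>0$. The same remark as after Corollary \ref{cor:1} applies. Finally, I collect all $O(L^{-2})$ and $O(L^{-2}\ln L)$ errors together with the remainder of Theorem \ref{thm:2}. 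All of these bounds are uniform in $x\in\overline\Omega_{\bm\varepsilon}$, which gives \eqref{eq.Cor2} with the convention $\beta_1=1$.
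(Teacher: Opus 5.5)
Your proposal is correct and follows the paper's own argument. You expand $K_k^{\varepsilon_k}=\beta_k/(|\ln\varepsilon_1|+\beta_k\ln 2)$, use \eqref{eq:average} to obtain the $\ln 2$ correction in $K_i^{\varepsilon_i}/\sum_k K_k^{\varepsilon_k}$, merge it with the $K_k^{\varepsilon_k}(K_k^{\varepsilon_k}-K_i^{\varepsilon_i})$ term, and substitute into \eqref{eq:thmP}; your handling of the unbounded $\ln|x-x^{(k)}|$ term and of the $j\leftrightarrow k$ relabeling is more explicit than the paper's. One small caution for that relabeling: the double sum in \eqref{eq:thmP} as printed contains spurious $k=i$ terms involving $\ln|x^{(i)}-x^{(i)}|$ (these terms are identically zero in the derivation \eqref{eq:pieceThomas5}), so sum over $k\neq i$; the index set $\{(k,j):\ j,k\neq i,\ j\neq k\}$ is then symmetric, and the identification with the third line of \eqref{eq.Cor2} is immediate.
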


In contrast with the linear case, the asymptotic exit distribution is no longer uniform because  the relative sizes of the windows now enter through the weights $p_i:= {\beta_i}/{\sum_{j=1}^N \beta_j}$. More precisely,  uniformly in  $x\in\overline\Omega_{\bm \varepsilon}$, it holds that (recall the definition of $Y(x)$ in \eqref{eq.Y=law}):
 $$ \lim_{ |\bm \varepsilon|\to 0} Y(x) \overset{\text{in law}}{=} Y ,$$
 where the law of $Y\in   \lb1,N\rb$ is given by $\mathbb P[Y=i]=p_i$. 
 
In line with the observations made for the linear regime, we highlight two analogous properties of the expansion \eqref{eq.Cor2}. 

First,  the formula  \eqref{eq.Cor2} is again     perfectly consistent with the normalization condition \eqref{eq.sum1}.  
Indeed,   
direct computations shows that for all $x\in\overline\Omega_{\bm \varepsilon}
$,  the sum of each error terms over $i\in\lb1,N\rb$ in \eqref{eq.Cor2} vanishes. 

Secondly, the formula  \eqref{eq.Cor2} is expressed in terms of $\varepsilon_1$  and with the relative scaling of the other windows w.r.t. $\varepsilon_1$. 
Again, since the choice of reference escape window is arbitrary,    formula \eqref{eq.Cor2} must be   unchanged even if the $j_0$-th window size is used as the basis for the scaling.    This is indeed the case since  for every $i\in\lb1,N\rb$, it holds that:
     \bes
   \varepsilon_i = \varepsilon_{j_0}^{\beta_{j_0}/\beta_i}, \   \frac{\beta_i}{\sum_{k=1}^N\beta_k} = \frac{\beta_i/\beta_{j_0}}{\sum_{k=1}^N\beta_k/\beta_{j_0}}, \ \text{ and } \ \frac{\beta_i}{\ln\varepsilon_1} = \frac{\beta_k/\beta_{j_0}}{\ln\varepsilon_{j_0}}.
   \ees
 This demonstrates the consistency of the formula \eqref{eq.Cor2}.
   


\subsection{The case of the two-dimensional unit disk}\label{sub:disk}
In the case when  $\Omega = B(0,1)$ is the two-dimensional  unit disk, we are able to explicit all the functions appearing in the previous asymptotic formulas, and especially the function $R_k^0$, $R_{\partial \Omega}$ (see Section \ref{sec.RkR}), and $G_\Omega$, $G_{\partial \Omega}$   (see \eqref{eq:Greens1} and \eqref{eq:Greens2} below).   Note that these functions indeed appear in the definitions of $g_k^{\bm \varepsilon}$ (see  \eqref{eq:espThomas}) and $g_{i,j,k}$ (see  \eqref{eq:probaThomas} or \eqref{eq:probaBoris}).   In this geometric setting, 
by a direct computations,  one has (see \eqref{eq.Nde}):
$${\forall z,\zeta \in \partial \Omega, \ \ \partial_{n(\zeta)} \ln|\zeta-z| =\frac 12} \text{ and } \int_{\partial\Omega}\ln|\zeta-z|d\sigma(\zeta)=0.$$ 
It is thus straightforward to obtain that for every $k \in \lb 1, N \rb$, $x \in \overline{\Omega}$, and $z \in \partial \Omega$, it holds that:
\bes
R_k^0(x) = \frac{1-|x|^2}{4}\  \ \text{and} \ \ R_{\partial\Omega}(x,z) = 0.
\ees
Therefore, from \eqref{eq:probaThomas}, one deduces that for all $i,j,k\in\lb1,N\rb$ and every $x\in \overline \Omega$,
$$g_{i,j,k}(x)=0,$$
an identity that one also recovers from  \eqref{eq:probaBoris}. 
In addition, using \eqref{eq:espThomas}, we deduce that for all $\forall k\in\lb1,N\rb$ and every $x\in \overline \Omega$:
\begin{equation}\label{eq.gkDISK}
 g_k^{\bm \varepsilon}(x) = \frac{1-|x|^2}{4} - \frac{\sum_{j\neq k}K_j^{\varepsilon_j} \ln|x^{(j)}-x^{(k)}|}{\sum_{j=1}^NK_j^{\varepsilon_j}},
\end{equation}


\subsection{Organization of the paper}
The remainder of this manuscript is organized as follows. 

In Section~\ref{sec:feynmankac}, we establish the rigorous mathematical link between the stochastic process \eqref{eq:SDE} and the underlying elliptic partial differential equations. We state and prove the Feynman-Kac formula under mixed boundary value conditions, while paying close attention to the local regularity issues at the junctions where the boundary condition changes type. 

Section~\ref{sec:approx} is devoted to our first approach, the approximation method. We construct explicit high-order approximating functions of the mean escape time and the escape location probabilities over the domain, study their regularity and their behavior as $|\bm \varepsilon |\to 0$. We then  employ them to provide a first complete proof of both Theorems~\ref{thm:1} and~\ref{thm:2}.

 In Section~\ref{sec:layer}, we develop our second approach, the Green method, which, we recall,  serves as a fully independent method  to derive Theorems~\ref{thm:1} and~\ref{thm:2}.  
 
  Finally, Section~\ref{sec:cor} deals with the applications of our general formulas. We show how our uniform expansions can be simplified under specific algebraic scaling regimes, providing the explicit proofs for Corollary~\ref{cor:3}   as well as Corollaries~\ref{cor:1} and \ref{cor:2} (the linear and polynomial window-scaling regimes).
%
%
%
%

\section{The Feynman-Kac formula}\label{sec:feynmankac}

 In this section, we show that the mean escape time and the espace location probabilities   satisfy elliptic mixed boundary value problems. To establish this connection, we rigorously prove a Feynman-Kac formula (see Proposition \ref{prop:FeynmanKac}). By doing so, we shift from a probabilistic framework to a deterministic study of the asymptotic behavior of solutions of mixed boundary value problems.  
 The  derivation of the Feynman-Kac formula  requires careful treatment of the solution's regularity at the junctions between Dirichlet and Neumann boundary conditions, where a well-known loss of regularity occurs, especially without so-called angle conditions.  In particular, we rely on the global continuity of the solutions over $\overline{\Omega}$ to   derive the Feynman-Kac formulas. 

\subsection{Regularity of solutions to elliptic mixed boundary value problems}\label{sub:regu}
Let $i \in \lb 1, N \rb$. For   $\bm \varepsilon\in (\mathbb R_+^*)^N$ (see \eqref{eq.epsil}), we consider the (weak) solution $\mathbf p_i^{\bm \varepsilon} \in H^1(\Omega)$ of the mixed boundary value problem:
\be\label{eq:E1}
\left\{
    \begin{aligned}
        \Delta \mathbf p_i^{\bm \varepsilon} & = 0 &&\text{ in } \Omega,\\
        \partial_n \mathbf p_i^{\bm \varepsilon} & = 0 &&\text{ on } \Sigma_\nnn^{\bm \varepsilon},\\
        \mathbf p_i^{\bm \varepsilon} &=\indic_{\Sigma_\ddd^{i, \varepsilon_i}} &&\text{ on } \Sigma_\ddd^{\bm \varepsilon}
    \end{aligned}
\right.
\ee
Note that $\mathbf p_i^{\bm \varepsilon}$ depends on the parameter $\bm \varepsilon = (\varepsilon_1, \ldots, \varepsilon_N)$ through the decomposition of $\partial \Omega$ into $\Sigma_\ddd$ and $\Sigma_\nnn^{\bm \varepsilon}$. We also introduce, for   $\bm \varepsilon\in (\mathbb R_+^*)^N$, the (weak) solution $\mathbf e^{\bm \varepsilon} \in H^1(\Omega)$ of
\be\label{eq:E2}
\left\{
    \begin{aligned}
        \Delta \mathbf e^{\bm \varepsilon} & = -1 &&\text{ in } \Omega,\\
        \partial_n \mathbf e^{\bm \varepsilon} & = 0 &&\text{ on } \Sigma_\nnn^{\bm \varepsilon},\\
        \mathbf e^{\bm \varepsilon} &=0 &&\text{ on } \Sigma_\ddd^{\bm \varepsilon}
    \end{aligned}
\right.
\ee
In general, at fixed   $\bm \varepsilon\in (\mathbb R_+^*)^N$, 
  the gradients of $\mathbf p_i^{\bm \varepsilon}$ and $\mathbf e^{\bm \varepsilon}$ exhibit singular behavior near $\partial \Sigma_\ddd$, which may lead to a loss of integrability in certain $L^p$ spaces. A classical illustration is given by the function $F : z \mapsto \Re(\sqrt{z})$, which is harmonic in the upper half-plane $\C^+ = \{z \in \C \mid \Im z > 0\}$, satisfies homogeneous Neumann boundary conditions on the positive real axis and homogeneous Dirichlet conditions on the negative real axis, yet satisfies $\nabla F \notin L^2_{\mathrm{loc}}(\partial \C^+)$. This highlights the need for careful analysis near $\partial \Sigma_\ddd$, in particular when applying integration by parts or handling traces of derivatives on $\partial \Omega$ (see, e.g. Proposition~\ref{prop:IPPhard}).

The regularity of solutions to mixed boundary value problems has been extensively studied over the past decades, especially in the absence of an angle condition between the Dirichlet and Neumann parts of the boundary ~\cite{Sa97_1,Sh68,Gr89,Br94,OtBr13,TaOtBr13}. 
Let us also mention the recent works  \cite{AlCh25,AlChMa22,Ch25} where $L^{2+\delta}$ gradient estimates are derived for  $H^1$ solutions to elliptic mixed boundary value problems  associated with operators of the form $\div( a(x)\nabla ) + b(x)\cdot \nabla $ in local Lipschitz graph domains.
 We collect regularity properties   to ensure sufficient regularity of $\nabla \mathbf p_i^{\bm \varepsilon}$ and $\nabla \mathbf e^{\bm \varepsilon}$ near $\partial \Sigma_\ddd$ that we need to perform our analysis.

 By elliptic regularity, one has (see, e.g. \cite[Example 2.2]{LaPeSh17}):
\be\label{eq:regularity}
\mathbf e^{\bm \varepsilon}, \mathbf p_i^{\bm \varepsilon} \in \ccc^\infty(\overline{\Omega} \setminus \partial \Sigma_D).
\ee
Moreover, by \cite[Theorem 1]{Gr89} (see also \cite{OtBr13,TaOtBr13}), there exists $q > 2$ such that
\be\label{eq:W1p}
\mathbf e^{\bm \varepsilon}, \mathbf p_i^{\bm \varepsilon} \in W^{1,q}(\Omega).
\ee
In addition, by \cite{Sh68} (see also \cite[Equation (0.3)]{Sa97_1}), for any $\alpha \in (0, 1/2)$, one has
\be\label{eq:holder}
\mathbf e^{\bm \varepsilon}, \mathbf p_i^{\bm \varepsilon} \in \ccc^{0,1/2-\alpha}(\overline{\Omega}).
\ee

Inversion of the boundary value systems and the application of Green's identities that we will perform require specific regularity estimates for the gradients of $\mathbf e^{\bm \varepsilon}$ and $\mathbf p_i^{\bm \varepsilon}$ on  the boundary. By \cite[Theorem 1.1]{TaOtBr13} (see also \cite[Theorem 1.2]{OtBr13}), there exists $q > 1$ such that
\be\label{eq:nontangential}
x \mapsto (\nabla \mathbf p_i^{\bm \varepsilon})^\star(x) \in L^q(\partial \Omega),
\ee
where, for a function $v : \Omega \to \mathbb{R}^2$, the non-tangential maximal function is defined by
\bes
v^\star(x)= \sup_{y\in \Gamma_\alpha(x)} | v(y)|, \ x\in \partial\Omega,
\ees
where $
\Gamma_\alpha(x)= \{y\in\Omega\ |\ |x-y|\le (1+\alpha) d(y,\partial\Omega)\}$. 
For different values of $\alpha$, the corresponding non-tangential maximal functions have equivalent $L^p$ norms \cite{To86,Br94}; hence the dependence on $\alpha$ is typically omitted.

In our setting, since $\partial \Sigma_\ddd$ has zero surface measure in $\partial \Omega$, it follows from \eqref{eq:nontangential} and \eqref{eq:regularity} that there exists $q > 1$ such that
\be\label{eq:tangential}
{\nabla \mathbf e^{\bm \varepsilon}|_{\partial \Omega} }, \nabla \mathbf p_i^{\bm \varepsilon}|_{\partial \Omega} \in L^q(\partial \Omega).
\ee
We emphasize that, in general, one cannot expect $\nabla u \in L^2(\partial \Omega)$ for solutions of mixed boundary value problems without an angle condition between the Dirichlet and Neumann parts of the boundary, as illustrated by \cite{Br94,OtBr13} and the preceding example $F$.

\subsection{The Feynman-Kac formula}
 The following proposition connects the solutions of \eqref{eq:E1} and \eqref{eq:E2} with the reflected Brownian motion  $(X_t)_{t \geq 0}$ (see \eqref{eq:SDE}). 
 
\begin{proposition}\label{prop:FeynmanKac}
For all $\bm \varepsilon = (\varepsilon_1, \ldots, \varepsilon_N) \in (\mathbb{R}_+^*)^N$, $i \in \lb 1, N \rb$, and all $x \in \overline{\Omega}$, we have:
\begin{itemize}
\item[$i)$] $\P_x[\tau_{\overline\Sigma_\ddd^{\bm \varepsilon}}<+\infty]=1$;
\item[$ii)$] $\mathbf e^{\bm \varepsilon}(x)= \E_x[\tau_{\overline\Sigma_\ddd^{\bm \varepsilon}}]$;
\item[$iii)$] $\mathbf p_i^{\bm \varepsilon}(x)= \P_x\big[X_{\tau_{\overline\Sigma_\ddd^{\bm \varepsilon}}}\in\overline\Sigma_\ddd^{i, \varepsilon_i}\big]$.
\end{itemize}
\end{proposition}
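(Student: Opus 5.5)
The plan is to apply Itô's formula to the solutions $\mathbf e^{\bm \varepsilon}$ and $\mathbf p_i^{\bm \varepsilon}$ along the reflected process \eqref{eq:SDE}. The singular points $\partial\Sigma_\ddd^{\bm \varepsilon}$ are handled by localization, and the limit is then passed using the global continuity \eqref{eq:holder}.

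\textbf{Step 1: point $i)$.} Exit is a.s. finite and, in fact, $\E_x[\tau_{\overline\Sigma_\ddd^{\bm \varepsilon}}]<\infty$. Reflected Brownian motion in a smooth bounded domain is a Feller process that is irreducible on $\overline\Omega$. Hence there exist $T>0$ and $c>0$ with $\inf_{y\in\overline\Omega}\P_y[\tau_{\overline\Sigma_\ddd^{\bm \varepsilon}}\le T]\ge c$. The lower bound at a single point $y$ follows, for instance, from the positivity of the Neumann heat kernel. Uniformity in $y$ follows from compactness of $\overline\Omega$ and lower semicontinuity of $y\mapsto \P_y[X$ enters the open relative set $\Sigma_\ddd^{\bm \varepsilon}$ before $T]$. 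The Markov property then gives $\P_x[\tau>nT]\le(1-c)^n$, so $\tau$ has exponential moments.

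\textbf{Step 2: Itô's formula away from the junctions.} Fix $\eta>0$ and let $u$ be either $\mathbf e^{\bm \varepsilon}$ or $\mathbf p_i^{\bm \varepsilon}$. By \eqref{eq:regularity}, $u$ is smooth on $\overline\Omega\setminus\partial\Sigma_\ddd$. Modify $u$ inside the $\eta$-neighbourhood $V_\eta$ of the finite set $\partial\Sigma_\ddd^{\bm \varepsilon}$ to obtain $u_\eta\in\ccc^2(\overline\Omega)$ with $u_\eta=u$ outside $V_\eta$. Set $\sigma_\eta=\tau_{\overline\Sigma_\ddd^{\bm \varepsilon}\cup \overline V_\eta}$. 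Itô's formula for \eqref{eq:SDE} gives
\[
u(X_{t\wedge\sigma_\eta})=u(X_0)+\int_0^{t\wedge\sigma_\eta}\Delta u(X_s)\,ds+\sqrt2\int_0^{t\wedge\sigma_\eta}\nabla u\cdot dB_s-\int_0^{t\wedge\sigma_\eta}\partial_n u(X_s)\,d\ell_s .
\]
Before $\sigma_\eta$, the local time only increases when $X$ is on $\Sigma_\nnn^{\bm \varepsilon}$, where $\partial_nu=0$. The gradient is bounded on the relevant region, so the stochastic integral is a martingale. Taking expectations and letting $t\to\infty$ (by Step 1 and bounded convergence) yields $\mathbf e^{\bm \varepsilon}(x)=\E_x[\sigma_\eta]+\E_x[\mathbf e^{\bm \varepsilon}(X_{\sigma_\eta})]$ and $\mathbf p_i^{\bm \varepsilon}(x)=\E_x[\mathbf p_i^{\bm \varepsilon}(X_{\sigma_\eta})]$. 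Here $x\notin \overline V_\eta$, and the points $x\in\partial\Sigma_\ddd$ are treated at the end.

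\textbf{Step 3: limit $\eta\to0$ (the main obstacle).} The delicate point is that $u$ has no $\ccc^1$ control near $\partial\Sigma_\ddd$, so the only tool available there is continuity. As $\eta\downarrow0$, $\sigma_\eta\uparrow\sigma_0\le\tau:=\tau_{\overline\Sigma_\ddd^{\bm \varepsilon}}$. The finite set $\partial\Sigma_\ddd$ lies inside $\overline\Sigma_\ddd^{\bm \varepsilon}$, so hitting it already means exit, and path continuity gives $\sigma_0=\tau$ a.s. In fact, two-dimensional reflected Brownian motion a.s. does not hit prescribed points before exiting through arcs, though this is not needed, since $\partial\Sigma_\ddd\subset\overline\Sigma_\ddd$.

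The boundary terms then converge as follows. By \eqref{eq:holder}, $u\in\ccc^0(\overline\Omega)$, so $u(X_{\sigma_\eta})\to u(X_\tau)$ a.s. and boundedly. On $\overline\Sigma_\ddd^{\bm \varepsilon}$, continuity gives $u(X_\tau)=0$ for $\mathbf e^{\bm \varepsilon}$ and $u(X_\tau)=\indic_{\overline\Sigma_\ddd^{i,\varepsilon_i}}(X_\tau)$ for $\mathbf p_i^{\bm \varepsilon}$. The latter holds because the closed windows are disjoint and $\mathbf p_i^{\bm \varepsilon}$ is continuous up to their endpoints. Monotone convergence gives $\E_x[\sigma_\eta]\to\E_x[\tau]$. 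This proves $ii)$ and $iii)$ for $x\in\overline\Omega\setminus\partial\Sigma_\ddd$.

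For $x\in\partial\Sigma_\ddd\subset\overline\Sigma_\ddd$, we have $\tau=0$, and both sides equal $0$, respectively $\indic_{\overline\Sigma_\ddd^{i,\varepsilon_i}}(x)$, again by continuity of $u$.
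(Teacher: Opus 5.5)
Your treatment of $ii)$ and $iii)$ is essentially the paper's argument. It consists of It\^o's formula for \eqref{eq:SDE} up to a localizing stopping time, the vanishing of the local-time term because $\partial_n u=0$ on $\Sigma_\nnn^{\bm\varepsilon}$, and passage to the limit using only the global continuity \eqref{eq:holder}. The localization differs slightly. The paper stops at the hitting time of the closed $r$-neighbourhood $\vvv_r$ of all of $\overline\Sigma_\ddd^{\bm\varepsilon}$, so it only uses smoothness of $u$ away from $\overline\Sigma_\ddd^{\bm\varepsilon}$. You excise only the junction points and use smoothness up to the open Dirichlet arcs. Both are fine given \eqref{eq:regularity}, and your explicit cutoff $u_\eta\in\ccc^2(\overline\Omega)$ makes the use of It\^o's formula more transparent than in the paper. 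You also correctly identify $\mathbf p_i^{\bm\varepsilon}(X_\tau)=\indic_{\overline\Sigma_\ddd^{i,\varepsilon_i}}(X_\tau)$, where the paper's text has a slip.

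The genuine difference, and the weak point, is Step 1. You justify the uniform lower bound by lower semicontinuity of the probability of entering $\Sigma_\ddd^{\bm\varepsilon}$, on the grounds that this set is open. But it is open only in $\partial\Omega$, not in the state space $\overline\Omega$. A path that touches an arc at a single instant is a uniform limit of paths staying in $\Omega$, so the Feller argument for open target sets does not apply. Similarly, positivity of the Neumann heat kernel only puts mass on open subsets of $\Omega$. You would still need a local argument showing that, from points near the interior of an arc, the process reaches the arc quickly with positive probability.

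Both points are repairable. Given that local argument, $\P_y[\tau\le T]\ge \E_y[\phi(X_s)]$ with $\phi(z)=\P_z[\tau\le T-s]$, and the right-hand side is continuous and positive in $y$ by the strong Feller property. However, none of this is needed. The finite-$t$ identity of your Step 2, $\mathbf e^{\bm\varepsilon}(x)-\E_x[\mathbf e^{\bm\varepsilon}(X_{t\wedge\sigma_\eta})]=\E_x[t\wedge\sigma_\eta]$, together with boundedness of $\mathbf e^{\bm\varepsilon}$, gives $\E_x[t\wedge\sigma_\eta]\le 2\|\mathbf e^{\bm\varepsilon}\|_{L^\infty(\Omega)}$ uniformly in $t$ and $\eta$. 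Hence $\E_x[\tau]<\infty$ by monotone convergence.

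That is how the paper proves $i)$, using only \eqref{eq:regularity} and \eqref{eq:holder}. Your route would additionally yield exponential tails for $\tau$, which the proposition does not require.
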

While this formula is often treated as intuitive through formal computations, providing a rigorous proof is a more delicate matter. Our derivation relies fundamentally on the non-trivial regularity properties established in \eqref{eq:regularity} and \eqref{eq:holder}. Since these two properties hold for any dimension $d \geq 2$, the assertions in Proposition \ref{prop:FeynmanKac} remain valid in an arbitrary dimensional setting

\begin{proof}
We first prove $i)$. Since $\tau_{\overline\Sigma_\ddd^{\bm \varepsilon}}(x)=0$ a.s. for all $x\in \overline\Sigma_\ddd^{\bm \varepsilon}$,  we  only have to consider initial conditions  $x \in \overline{\Omega} \setminus \overline\Sigma_\ddd^{\bm \varepsilon}$.
For $r > 0$, we set 
\bes
\vvv_r = \{x\in\overline\Omega\ |\ d(x,\overline \Sigma_\ddd^{\bm \varepsilon})\leq r\},
\ees
where $d(x,A)$ stands for the Euclidean distance between $x$ and $A\subset \mathbb R^2$. 
In what follows,  $x \in \overline{\Omega} \setminus \overline\Sigma_\ddd^{\bm \varepsilon}$ and $r > 0$ is chosen small enough such that $x \notin \vvv_r$.  
Since $\mathbf e^{\bm \varepsilon}$ is smooth in  $\overline{\Omega}\setminus\overline\Sigma_\ddd^{\bm \varepsilon}$ (see indeed \eqref{eq:regularity}),  using  It\=o’s formula together with the fact that   $\int_0^{t\wedge\tau_{\vvv_r}} \partial_n \mathbf e^{\bm \varepsilon}(X_s) d\ell_s = 0$ (see \eqref{eq:E2}),  we deduce that   for all $t \geq 0$: 
\bes
\E_x[\mathbf e^{\bm \varepsilon}(X_{t\wedge\tau_{\vvv_r}})] - \mathbf e^{\bm \varepsilon}(x) = \E_x\Big[\int_0^{t\wedge\tau_{\vvv_r}}\Delta\mathbf e^{\bm \varepsilon}(X_s)ds\Big].
\ees
Using \eqref{eq:E2} and \eqref{eq:regularity}, we have $\Delta \mathbf e^{\bm \varepsilon} = -1$ on $\overline{\Omega} \setminus \partial \Sigma_\ddd$, and therefore
\be\label{eq:e0}
\mathbf e^{\bm \varepsilon}(x) - \E_x[\mathbf e^{\bm \varepsilon}(X_{t\wedge\tau_{\vvv_r}})] = \E_x[t\wedge\tau_{\vvv_r}].
\ee
Since $\mathbf e^{\bm \varepsilon}$ is bounded on $\overline{\Omega}$ by \eqref{eq:holder}, it follows from \eqref{eq:e0} that there exists $C > 0$ such that, for all sufficiently small $r > 0$ and all $t \geq 0$,
\bes
\E_x[t \wedge \tau_{\vvv_r}] \leq C.
\ees
Letting $t\to +\infty$, we deduce that $\E_x[ \tau_{\vvv_r}] \leq C$. 
Moreover, $\tau_{\vvv_r} \to \tau_{\overline\Sigma_\ddd^{\bm \varepsilon}}$ from below almost surely as $r \to 0$. By the monotone convergence theorem, it thus follows that  $\E_x[\tau_{\overline\Sigma_\ddd^{\bm \varepsilon}}] < +\infty$. Hence, for all $x \in \overline{\Omega} \setminus \overline\Sigma_\ddd^{\bm \varepsilon}$, it holds that:
\be\label{eq:Pfini}
\P_x[\tau_{\overline\Sigma_\ddd^{\bm \varepsilon}}<+\infty] = 1.
\ee
We now prove $ii)$. Returning to \eqref{eq:e0}, we apply the dominated convergence theorem to the left-hand side (using \eqref{eq:holder}) and the monotone convergence theorem to the right-hand side, which yields by continuity of the trajectories of the process $(X_t)_{t\ge 0}$:
\bes
\mathbf e^{\bm \varepsilon}(x) - \E_x[\mathbf e^{\bm \varepsilon}(X_{t\wedge\tau_{\overline\Sigma_\ddd^{\bm \varepsilon}}})] = \E_x[t\wedge\tau_{\overline\Sigma_\ddd^{\bm \varepsilon}}].
\ees
As $t \to +\infty$ and using \eqref{eq:Pfini}, we it holds a.s. that $X_{t \wedge \tau_{\overline\Sigma_\ddd^{\bm \varepsilon}}} \to X_{\tau_{\overline\Sigma_\ddd^{\bm \varepsilon}}} \in \overline\Sigma_\ddd^{\bm \varepsilon}$. Since $\mathbf e^{\bm \varepsilon} = 0$ on $\overline\Sigma_\ddd^{\bm \varepsilon}$, we obtain
\bes
\mathbf e^{\bm \varepsilon}(x) = \E_x[\tau_{\overline\Sigma_\ddd^{\bm \varepsilon}}],
\ees
for all $x \in \overline{\Omega} \setminus \overline\Sigma_\ddd^{\bm \varepsilon}$. This equality  extends to every $x \in \overline\Sigma_\ddd^{\bm \varepsilon}$. Indeed, $\mathbf e^{\bm \varepsilon}=0$ on $\overline\Sigma_\ddd^{\bm \varepsilon}$  (by \eqref{eq:E2} and by the continuity of $\mathbf e^{\bm \varepsilon}$ over $\overline \Omega$).

We now prove $iii)$.  
By continuity of  $\mathbf p_i^{\bm \varepsilon}$ over $\overline \Omega$ (see \eqref{eq:holder}) and since $\mathbf p_i^{\bm \varepsilon}$ is smooth  in  $\overline{\Omega}\setminus \overline \Sigma_\ddd^{\bm \varepsilon}$ (see indeed \eqref{eq:regularity}), we obtain that for every  $x \in \overline{\Omega} \setminus \overline\Sigma_\ddd^{\bm \varepsilon}$ and sufficiently small $r > 0$,  
\bes
\mathbf p_i^{\bm \varepsilon}(x) = \E_x\big[\mathbf p_i^{\bm \varepsilon}(X_{\tau_{\vvv_r}})\big] = \P_x\big[X_{\tau_{\overline\Sigma_\ddd^{\bm \varepsilon}}}\in\overline\Sigma_\ddd^{i, \varepsilon_i}\big].
\ees  
Note that we have used that 
$$X_{\tau_{\overline\Sigma_\ddd^{\bm \varepsilon}}} \in \overline\Sigma_\ddd^{\bm \varepsilon} \text{ and }  \mathbf p_i^{\bm \varepsilon}=0 \text{ over } \overline\Sigma_\ddd^{\bm \varepsilon}$$
This equality extends to all $x \in \overline{\Omega}$ by continuity.
The proof of the proposition is complete. 
\end{proof}


\section{Proofs of Theorems \ref{thm:1} and \ref{thm:2} via the approximation method}
\label{sec:approx}

The goal of this section is to  to prove Theorems \ref{thm:1} and \ref{thm:2} by constructing sufficiently accurate approximations of $\mathbf e^{\bm \varepsilon}$ and $\mathbf p_i^{\bm \varepsilon}$ (see Proposition \ref{prop:FeynmanKac}). This provides the first approach we use to derive sharp asymptotics for both the mean escape time and the escape location probabilities. The method relies on constructions introduced in \cite{CaLeNo26} which  is in particular  based on a careful study of the spectral properties as $|\bm \varepsilon|\to 0$ of the generator of $(X_t)_{t\ge 0}$ (namely the Laplacian)   with  mixed boundary conditions.

\subsection{Construction of approximating functions for  the mean escape time and the escape location probabilities}

\subsubsection{Approximation of the mean escape time} 
The main ingredient in the approximation of $\mathbf e^{\bm \varepsilon}$ is the existence,  which is the objective of this section,  of a sufficiently smooth function $\phi^{\bm \varepsilon}:\overline \Omega \to \mathbb R$ (which depends on  $\bm \varepsilon$) satisfying
\be\label{eq:phi}
\left\{
    \begin{aligned}
        \Delta \phi^{\bm \varepsilon} &= -1 &&\text{in } \Omega,\\
        \partial_{n_\Omega} \phi^{\bm \varepsilon} &= 0 &&\text{on } \Sigma_\nnn^{\bm \varepsilon},\\
        \phi^{\bm \varepsilon} &= O_{L^\infty(\Sigma_\ddd^{\bm \varepsilon})}(|\mathbf K^{\bm \varepsilon}|) &&\text{on } \Sigma_\ddd^{\bm \varepsilon}
    \end{aligned}
\right.
\ee
As will be shown in Corollary~\ref{cor:phie}, such a function $\phi^{\bm \varepsilon}$ yields an accurate approximation of $\mathbf e^{\bm \varepsilon}$. It also constitutes the main building block in the approximation of the exit location probabilities $\mathbf p_i^{\bm \varepsilon}$. The construction of $\phi^{\bm \varepsilon}$ relies on the framework introduced in \cite{CaLeNo26}, which we now briefly recall.

To construct $\phi^{\bm \varepsilon}$, we introduce a family of functions $(f_k^{\varepsilon_k})_{k \in \lb 1, N \rb} \subset L^2(\Omega)$, which is the purpose of what follows.  The function $f_k^{\varepsilon_k}$ is defined on $\overline{\Omega}$ using two auxiliary functions:  a function $R_k^{\varepsilon_k}$ and $w_k^{\varepsilon_k}$.  The function $w_k^{\varepsilon_k}$ is defined with the help of a  coordinates system $\psi_k$  in a neighborhood of $x^{(k)}$  in $\partial \Omega$.  

\subsubsection{Coordinate systems $\psi_k$ and the functions $w_k^{\varepsilon_k}$}
\label{sec.psik}
To defined the functions $w_k^{\varepsilon_k}$,  we use a  coordinate systems $\Psi_k$ defined as follows. 
Let $k   \in \lb 1, N \rb$.  Since $\partial \Omega$ is $\mathcal C^\infty$ , we can consider  two  open neighborhoods,   $U_0$ and $U_k$,  of respectively   $0$ in $\mathbb{R}^2$ and  $x^{(k)}$ in $\mathbb{R}^2$, and  a $\mathcal C^\infty$ diffeomorphism  map $\Psi_k : \overline U_0 \to \overline  U_k$  such that $\Psi_k(0) = x^{(k)}$, $d_0\Psi_k$ is unitary, and 
$$
\Omega \cap U_k = \Psi_k\left( \left(\mathbb{R}^*_- \times \mathbb{R}\right) \cap U_0 \right) \quad \text{and} \quad \Sigma_\ddd^{k, \varepsilon_k} = \Psi_k\left( \{0\} \times \left(T _{k,\bm \varepsilon}^-, T _{k,\bm \varepsilon}^+\right) \right),
$$
  for some real numbers $T _{k,\bm \varepsilon}^-$, $T _{k,\bm \varepsilon}^+$ satisfying 
$|T_{k,\varepsilon_k}^-| \le T_{k,\varepsilon_k}^+$, and 
$$
T_{k,\varepsilon_k}^- = -\varepsilon_k + O(|\bm \varepsilon|^2) \quad \text{and} \quad T_{k,\varepsilon_k}^+ = \varepsilon_k + O(|\bm \varepsilon|^2).
 $$
 Above, $\mathbb{R}^*_- = \{s\in \mathbb R, s<0\}$.
 For $t \in \left(T_{k,\varepsilon_k}^-, T_{k,\varepsilon_k}^+\right)$, we set 
 $$
 \psi_k(t) = \Psi_k(0,t),$$
  which is nothing but a $\mathcal C^\infty$ parametrization of $ \Sigma_\ddd^{k, \varepsilon_k}$.  
Let $\chi_k^{\varepsilon_k} :\mathbb R\to [0, 1]$ be  a $\mathcal C^\infty$  function  such that:
$$
\chi_k^{\varepsilon_k} \equiv 1 \text{ on } \left[T_{k,\varepsilon_k}^- + \varepsilon_k^2, \, |T_{k,\varepsilon_k}^-| - \varepsilon_k^2\right] \ \text{and } \  \text{supp } \chi_k^{\varepsilon_k} \subset \left[T_{k,\varepsilon_k}^- + \varepsilon_k^2/2, \, |T_{k,\varepsilon_k}^-| - \varepsilon_k^2/2\right].
$$
For $t \in [T_{k,\varepsilon_k}^-, T_{k,\varepsilon_k}^+]$,  we set 
\be\label{eq.wke}
w_k^{\varepsilon_k}(t) = \frac{ \chi_k^{\varepsilon_k}(t)  }{\sqrt{ 1-(t/ T_{k,\varepsilon_k}^+)^2} }.    
\ee
The function   $w_k^{\varepsilon_k}$  is extended by $0$ outside of the support of $\chi_k^{\varepsilon_k}$. Note that $w_k^{\varepsilon_k}$ is  bounded over $\mathbb R$ {as it is a $\mathcal C^\infty$ function}. 
Moreover, direct computations show that (see \cite[Lemma 2.7]{CaLeNo26}):
$$
\|w_k^{\varepsilon_k}\|_{L^1(\mathbb R)} = \pi\varepsilon_k \left(1 + O \left(|{\bm \varepsilon}|^{1/2}\right)\right).
$$

\subsubsection{The functions $R_k^{\varepsilon_k}$}
The construction of $R_k^{\varepsilon_k}$ is more involved than the one for $w_k^{\varepsilon_k}$. According to the results of Section 2.2 in \cite{CaLeNo26}, the following hold true. There exists a function $R_k^{\varepsilon_k}\in W^{1,3}(\Omega) \hookrightarrow \mathcal{C}^{0,1/3}(\overline{\Omega})$ solving the following Neumann problem:
\be\label{eq:Rkepd}
\left\{
    \begin{aligned}
        \Delta R_k^{\varepsilon_k} &= -\frac{\pi}{|\Omega|} &&\text{in }\Omega,\\
        \partial_{n(\zeta)}R_k^{\varepsilon_k}(\zeta) &= \frac{-1}{\|w_k^{\varepsilon_k}\|_{L^1(\mathbb R)}} \int_{T_{k,\varepsilon_k}^-}^{T_{k,\bm  \varepsilon}^+} w_k^\varepsilon(t)\partial_{n_\Omega(\zeta)} \ln|\zeta-\psi_k(t)|   \,dt  &&\text{on }\partial\Omega\setminus \overline \Sigma_\ddd^{k, \varepsilon_k},\\
        R_k^{\varepsilon_k}(x^{(k)}) &= 0.
    \end{aligned}
\right.
\ee
Moreover, it holds $R_k^{\varepsilon_k}$ is uniformly bounded in $W^{1,3}(\Omega)$ (and thus also in $\mathcal{C}^{0,1/3}(\overline{\Omega})$) uniformly in $|\bm \varepsilon|>0$ small enough, namely  there exists $C>0$ such that for all $|\bm \varepsilon|>0$ small enough:
$$
\|R_k^{\varepsilon_k}\|_{W^{1,3}(\Omega)} \le C.
$$
Note also that the mapping (recall $\psi_k$ is a parametrization of $\Sigma_\ddd^{k, \varepsilon_k}$):
$$ \zeta \mapsto \int_{T_{k,\varepsilon_k}^-}^{T_{k,\bm  \varepsilon}^+} w_k^\varepsilon(t)\partial_{n_\Omega(\zeta)} \ln|\zeta-\psi_k(t)|   \,dt \text{ is $\mathcal C^\infty$ on $\partial \Omega\setminus \overline \Sigma_\ddd^{k, \varepsilon_k}$}. 
$$
 Hence, since  $R_k^{\varepsilon_k}\in H^1(\Omega)$, by the elliptic regularity theory for Neumann problems, it holds that:
\be\label{re.regRke}
{R_k^{\varepsilon_k}\in \ccc^\infty(\overline{\Omega} \setminus \overline\Sigma_\ddd^{k, \varepsilon_k})}.
\ee
Finally, it  is shown in \cite[Lemma 2.10]{CaLeNo26} that  (see \eqref{eq:Rok}):
\be\label{eq.Rke-0}
R_k^{\varepsilon_k}= R_k^0+ O(\varepsilon_k^{1/3}) \text{ in } W^{1,3}(\Omega).
\ee
 
\subsubsection{The functions $f_k^{\varepsilon_k}$} 
For $k   \in \lb 1, N \rb$ and $x\in \overline \Omega$,  we set 
\be\label{eq:deffk}
 f_k^{\varepsilon_k}(x) = \frac{1}{\Vert w_k^{\varepsilon_k}\Vert_{L^1(  \mathbb R)}} \int_{T_{k,\varepsilon_k}^-}^{T_{k,\bm  \varepsilon}^+}  w_k^{\varepsilon_k}(t) \ln |x-\psi_k(t)|d t + R_k^{\varepsilon_k}(x).
 \ee
 We have the following result from \cite{CaLeNo26}. 
 \begin{lemma}\label{lem:fk}
 For all $k   \in \lb 1, N \rb$ and for any {$\alpha \in (0,1/2)$}:
 $$f_k^{\varepsilon_k} \in \, { \ccc^\infty(\overline{\Omega} \setminus \overline\Sigma_\ddd^{k, \varepsilon_k})}  \,  \cap\,   H^1(\Omega)\,  \cap\,  \ccc^{0, \alpha}(\overline{\Omega}).$$
  In addition,  for all $k   \in \lb 1, N \rb$,  
 $f_k^{\varepsilon_k}= f_k^0+  O(\varepsilon_k^{1/3}) \text{ in } L^2(\Omega)$,  
 where: 
\be\label{eq:fk0}
 f_k^0(x) = \ln|x-x^{(k)}|+R_k^0(x), \ x\in \overline \Omega.
\ee  
  Moreover, we have:
\be\label{eq:fk}
\left \{
    \begin{aligned}
        \Delta f_k^{\varepsilon_k} &= -\frac{\pi}{|\Omega|} &&\text{in }  \Omega,\\
        \partial_{n_\Omega(\zeta)}f_k^{\varepsilon_k}(\zeta) &= 0 &&\text{on } \Sigma_\nnn^{\bm \varepsilon},\\
        f_k^{\varepsilon_k} &= \ln\frac{\varepsilon_k}{2}\indic_{\Sigma_\ddd^{k, \varepsilon_k}} + \sum_{k'\in\lb1,N\rb \setminus \{k\}}f_k^0(x^{(k')})\indic_{\Sigma^{k'}_D} + O_{L^\infty(\Sigma_\ddd^{\bm \varepsilon})}(|\bm \varepsilon|^{1/6}) &&\text{on } \Sigma_\ddd^{\bm \varepsilon},
    \end{aligned}
\right. 
\ee
\end{lemma}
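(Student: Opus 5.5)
The plan is to split $f_k^{\varepsilon_k}$ as in \eqref{eq:deffk} into the single-layer logarithmic potential
$$S_k^{\varepsilon_k}(x)=\frac{1}{\Vert w_k^{\varepsilon_k}\Vert_{L^1(\mathbb R)}}\int_{T_{k,\varepsilon_k}^-}^{T_{k,\varepsilon_k}^+} w_k^{\varepsilon_k}(t)\ln|x-\psi_k(t)|\,dt$$
plus $R_k^{\varepsilon_k}$. Each property of Lemma \ref{lem:fk} is then obtained by treating the two pieces separately and using the properties of $R_k^{\varepsilon_k}$ already recorded above.

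\emph{Regularity.} The weight $w_k^{\varepsilon_k}$ is smooth and compactly supported inside $(T^-_{k,\varepsilon_k},T^+_{k,\varepsilon_k})$, so $S_k^{\varepsilon_k}$ is $\mathcal C^\infty$ away from $\psi_k(\operatorname{supp} w_k^{\varepsilon_k})\subset\overline\Sigma_\ddd^{k,\varepsilon_k}$. Together with \eqref{re.regRke} this gives smoothness on $\overline\Omega\setminus\overline\Sigma_\ddd^{k,\varepsilon_k}$. Because the density is smooth along a smooth curve, classical single-layer theory gives $S_k^{\varepsilon_k}\in W^{1,p}_{\rm loc}$ for every $p<\infty$; in fact the gradient only has jump discontinuities across the curve. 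Hence $S_k^{\varepsilon_k}\in H^1(\Omega)\cap\ccc^{0,\alpha}(\overline\Omega)$. Since $R_k^{\varepsilon_k}\in W^{1,3}\hookrightarrow\ccc^{0,1/3}$, the H\"older claim follows for $\alpha\le 1/3$. To reach $\alpha<1/2$, I would use that $R_k^{\varepsilon_k}$ solves a Neumann problem whose data are bounded near the window and smooth elsewhere, which upgrades it to $W^{1,p}$ for all $p$.

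\emph{Equations.} $\ln|\cdot-\psi_k(t)|$ is harmonic in $\Omega$, so $\Delta f_k^{\varepsilon_k}=\Delta R_k^{\varepsilon_k}=-\pi/|\Omega|$. On $\partial\Omega\setminus\overline\Sigma_\ddd^{k,\varepsilon_k}$ the normal derivative can be taken under the integral, and it exactly cancels the Neumann datum in \eqref{eq:Rkepd}. This yields $\partial_n f_k^{\varepsilon_k}=0$ on $\Sigma_\nnn^{\bm\varepsilon}$, and also on the other windows $\Sigma_\ddd^{k',\varepsilon_{k'}}$, although that is not needed. For the $L^2$ convergence, the difference
$$S_k^{\varepsilon_k}(x)-\ln|x-x^{(k)}|=\frac{1}{\Vert w_k^{\varepsilon_k}\Vert_{L^1(\mathbb R)}}\int w_k^{\varepsilon_k}(t)\ln\frac{|x-\psi_k(t)|}{|x-x^{(k)}|}\,dt$$
is $O(\varepsilon_k/|x-x^{(k)}|)$ for $|x-x^{(k)}|\ge 2\varepsilon_k$. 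Near $x^{(k)}$ it is bounded by $C|\ln\varepsilon_k|+|\ln|x-x^{(k)}||$. Integrating both regions gives an $O(\varepsilon_k|\ln\varepsilon_k|)$ bound in $L^2$, and adding \eqref{eq.Rke-0} gives $O(\varepsilon_k^{1/3})$.

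\emph{Dirichlet values.} On $\Sigma_\ddd^{k',\varepsilon_{k'}}$ with $k'\ne k$, both $S_k^{\varepsilon_k}$ and $R_k^{\varepsilon_k}$ are smooth there. Moreover $S_k^{\varepsilon_k}=\ln|\cdot-x^{(k)}|+O(\varepsilon_k)$ there, and $R_k^{\varepsilon_k}$ differs from $R_k^0$ by $O(\varepsilon_k^{1/3})$ in $W^{1,3}\hookrightarrow\ccc^{0,1/3}$. A Lipschitz bound over a window of size $\varepsilon_{k'}$ then gives $f_k^{\varepsilon_k}=f_k^0(x^{(k')})+O(|\bm\varepsilon|^{1/3})$.

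\emph{Main obstacle: the window $\Sigma_\ddd^{k,\varepsilon_k}$ itself.} For $x=\psi_k(s)$, I would write $|\psi_k(s)-\psi_k(t)|=|s-t|(1+O(|\bm\varepsilon|))$, which reduces $S_k^{\varepsilon_k}$ to $\frac{1}{\pi\varepsilon_k}\int w_k^{\varepsilon_k}(t)\ln|s-t|\,dt$ up to small errors. The key input is the classical identity
$$\int_{-a}^a\frac{\ln|s-t|}{\sqrt{a^2-t^2}}\,dt=\pi\ln\frac a2\qquad\text{for all } |s|<a.$$
Applied with $a=T^+_{k,\varepsilon_k}$, it produces $\ln(\varepsilon_k/2)$ plus errors. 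These errors come from the cutoff $\chi_k^{\varepsilon_k}$, the asymmetry $T^-\ne -T^+$, and the normalization $\Vert w_k^{\varepsilon_k}\Vert_{L^1}=\pi\varepsilon_k(1+O(|\bm\varepsilon|^{1/2}))$. The cutoff removes mass of order $\int_{a-\varepsilon^2}^a(a^2-t^2)^{-1/2}\,dt\sim\varepsilon^{1/2}$; after normalization this removed mass contributes a relative error $O(\varepsilon^{1/2}|\ln\varepsilon|)$. Finally, $R_k^{\varepsilon_k}(\zeta)=R_k^{\varepsilon_k}(x^{(k)})+O(\varepsilon_k^{1/3})=O(\varepsilon_k^{1/3})$ by H\"older continuity. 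Controlling these cutoff errors uniformly in $s$, including $s$ near the window endpoints, is the delicate point, and it is what limits the bound to the final $O(|\bm\varepsilon|^{1/6})$.
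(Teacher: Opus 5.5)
Your argument is sound in substance, but it follows a different and more self-contained route than the paper.

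\emph{How the paper proceeds.} It imports most of the lemma from \cite{CaLeNo26}: the membership $S_k^{\varepsilon_k}\in H^1(\Omega)$, the $L^2$ rate, and the values on $\Sigma_\ddd^{\bm\varepsilon}$. The last of these is Corollary 2.11 there, which is where the exponent $1/6$ comes from. The paper's own argument concerns only the H\"older claim. It views $f_k^{\varepsilon_k}$ as a solution of a mixed problem: $\Delta f_k^{\varepsilon_k}$ is constant, $\partial_n f_k^{\varepsilon_k}=0$ off $\overline\Sigma_\ddd^{k,\varepsilon_k}$, and the trace on $\Sigma_\ddd^{k,\varepsilon_k}$ is bounded. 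A J\"orgens-type analysis of $s\mapsto\int w_k^{\varepsilon_k}(t)\ln|s-t|\,dt$ supplements this. It then applies Shamir's theorem \cite{Sh68}, which is what produces the threshold $\alpha<1/2$.

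\emph{How you proceed, and what it buys.} You use that $w_k^{\varepsilon_k}\in\mathcal C_c^\infty$, so $S_k^{\varepsilon_k}$ is a single layer with smooth density and is regular up to $\partial\Omega$ from inside. You prove the $L^2$ rate by near/far estimates. You compute the window values directly from $\int_{-a}^a\ln|s-t|\,(a^2-t^2)^{-1/2}\,dt=\pi\ln(a/2)$.

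This is more transparent and gives sharper rates. The set where $\chi_k^{\varepsilon_k}\neq1$ has length $O(\varepsilon_k^2)$ at each end. Your cutoff error is therefore $O(\varepsilon_k^{1/2}|\ln\varepsilon_k|)$ uniformly in $s$, endpoints included, so the total error on the windows is $O(|\bm\varepsilon|^{1/3})$, dominated by the H\"older-$1/3$ oscillation of $R_k^{\varepsilon_k}$. Your closing remark that endpoint control is what forces $|\bm\varepsilon|^{1/6}$ is therefore off; that exponent is simply inherited from the citation.

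\emph{The step you must make explicit.} This is the upgrade of $R_k^{\varepsilon_k}$ beyond $W^{1,3}$. As written, \eqref{eq:Rkepd} prescribes the Neumann datum only on $\partial\Omega\setminus\overline\Sigma_\ddd^{k,\varepsilon_k}$, and the paper only asserts $R_k^{\varepsilon_k}\in W^{1,3}(\Omega)$. Knowing that the datum is bounded \emph{near} the window does not feed Neumann $W^{1,p}$ theory; you need it \emph{on} the window.

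You should state that $R_k^{\varepsilon_k}$ solves the pure Neumann problem with the same formula on all of $\partial\Omega$. This is consistent: by \eqref{eq.CondC2} the formula integrates over $\partial\Omega$ to exactly $-\pi=\int_\Omega\Delta R_k^{\varepsilon_k}$. The kernel in \eqref{eq.Nde} is jointly smooth on $\partial\Omega\times\partial\Omega$, so this datum is $\mathcal C^\infty(\partial\Omega)$. Then $R_k^{\varepsilon_k}$, and hence $f_k^{\varepsilon_k}$, lie in $\mathcal C^\infty(\overline\Omega)$ for fixed $\bm\varepsilon$, which is far more than $\alpha<1/2$.

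Without that identification your route stops at $\alpha\le 1/3$. The paper's mixed-problem argument, by contrast, needs no information about $\partial_n R_k^{\varepsilon_k}$ along the window.
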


\begin{proof} 
The fact that $f_k^{\varepsilon_k} \in \, { \ccc^\infty(\overline{\Omega} \setminus \overline\Sigma_\ddd^{k, \varepsilon_k})}  \,  \cap\,   H^1(\Omega)$ 
follows from \eqref{re.regRke},  \eqref{eq:deffk}, and the fact that $R_k^{\varepsilon_k}\in H^1(\Omega)$ and (see~\cite[Lemma 2.9]{CaLeNo26}):
\be\label{eq.Ske}
S_k^{\varepsilon_k}: x\mapsto \frac{1}{\Vert w_k^{\varepsilon_k}\Vert_{L^1(  \mathbb R)}}   \int_{T_{k,\varepsilon_k}^-}^{T_{k,\bm  \varepsilon}^+}  w_k^{\varepsilon_k}(t) \ln |x-\psi_k(t)|d t \in \ccc^\infty(\overline{\Omega} \setminus \overline\Sigma_\ddd^{k, \varepsilon_k})\cap H^1(\Omega).
\ee
It is also proved in \cite{CaLeNo26}  that 
$$S_k^{\varepsilon_k}= \ln|\cdot-x^{(k)}|+  O(\varepsilon_k^{1/3}) \text{ in } L^2(\Omega), \text{ and } R_k^{\varepsilon_k}= R_0^{\bm \varepsilon}+  O(\varepsilon_k^{1/3})\text{ in } W^{1,3}(\Omega).$$
This implies that $f_k^{\varepsilon_k}= f_k^0+  O(\varepsilon_k^{1/3}) $ in $L^2(\Omega)$. 

Let us now show that for every {$\alpha \in (0,1/2)$},
$f_k^{\varepsilon_k} \in    \ccc^{0,\alpha}(\overline{\Omega})$. 
To this end, we use \cite[Theorem 5.3]{Sh68}. 
We note that by construction 
\begin{equation}\label{eq.bdfk}
\Delta f_k^{\varepsilon_k} = -\frac{\pi}{|\Omega|}  \text{ in } \Omega, \  \text{ and } \ \partial_{n_\Omega(\zeta)} f_k^{\varepsilon_k}(\zeta) =0  \ \text{ on }\partial\Omega\setminus \overline \Sigma_\ddd^{k, \varepsilon_k}.
\end{equation} 
According to \eqref{eq.bdfk} and since $f_k^{\varepsilon_k}\in H^1(\Omega)$, it thus only remains    to study the regularity of the   function 
$$ \zeta\in \Sigma_\ddd^{k, \varepsilon_k} \mapsto  f_k^{\varepsilon_k}( \zeta)=S_k^{\varepsilon_k}( \zeta)+ R_k^{\varepsilon_k}( \zeta).$$ 
On the one hand, $R_k^{\varepsilon_k}|_{\partial \Omega}$ is continuous. On the other hand, according to the analysis led in the proof of~\cite[Lemma 2.8]{CaLeNo26}, $S_k^{\varepsilon_k}\in L^\infty(\Sigma_\ddd^{k, \varepsilon_k})$. 
Note  actually   that   $S_k^{\varepsilon_k}$ enjoys more regularity over $\Sigma_\ddd^{k, \varepsilon_k}$. Indeed, 
consider the mapping $$ u \in (T_{k,\varepsilon_k}^-, T_{k,\bm  \varepsilon}^+)   \mapsto S_k^{\varepsilon_k}(\psi_k(u))=\frac{1}{\Vert w_k^{\varepsilon_k}\Vert_{L^1(  \mathbb R)}} \int_{T_{k,\varepsilon_k}^-}^{T_{k,\bm  \varepsilon}^+}  w_k^{\varepsilon_k}(t) \ln |\psi_k(u)-\psi_k(t)|d t.$$
Note that 
\bes
\begin{split}
S_k^{\varepsilon_k}(\psi_k(s)) &= \frac{1}{2\Vert w_k^{\varepsilon_k}\Vert_{L^1(  \mathbb R)}} \int_{T_{k,\varepsilon_k}^-}^{T_{k,\bm  \varepsilon}^+}  w_k^{\varepsilon_k}(t) \ln\frac{ |\psi_k(s)-\psi_k(t)|^2}{|s-t|^2}\, d t\\
&\quad +\frac{1}{\Vert w_k^{\varepsilon_k}\Vert_{L^1(  \mathbb R)}} \int_{T_{k,\varepsilon_k}^-}^{T_{k,\bm  \varepsilon}^+}  w_k^{\varepsilon_k}(t) \ln |s-t| \, d t. 
\end{split}
\ees
According  to the analysis led in \cite[Section 13.6]{Jo82} and since $w_k^{\varepsilon_k} \in L^p((T_{k,\varepsilon_k}^-, T_{k,\bm  \varepsilon}^+))$ for every $p\in (1,+\infty)$,  
 the mapping 
$$ u \in (T_{k,\varepsilon_k}^-, T_{k,\bm  \varepsilon}^+)   \mapsto \int_{T_{k,\varepsilon_k}^-}^{T_{k,\bm  \varepsilon}^+}  w_k^{\varepsilon_k}(t) \ln |s-t|\, d t,$$
is   absolutely continuous function and its  derivative lies  in $L^p((T_{k,\varepsilon_k}^-, T_{k,\bm  \varepsilon}^+))$ for every $1<p<2$.  
In addition,  
 the mapping 
$$ s \in [T_{k,\varepsilon_k}^-, T_{k,\bm  \varepsilon}^+]   \mapsto \int_{T_{k,\varepsilon_k}^-}^{T_{k,\bm  \varepsilon}^+}  w_k^{\varepsilon_k}(t) \ln \frac{ |\psi_k(s)-\psi_k(t)|^2}{|s-t|^2} \, d t \text{ is $\mathcal C^\infty$.}$$ 
Indeed, 
$$\Phi_t(s):s\in [T_{k,\varepsilon_k}^-, T_{k,\bm  \varepsilon}^+]\mapsto \frac{ |\psi_k(s)-\psi_k(t)|^2}{|s-t|^2}  \text{ is } \mathcal C^\infty \text{ and } \exists c>0,  \inf_{s,t\in     [T_{k,\varepsilon_k}^-, T_{k,\bm  \varepsilon}^+]} 
 |\Phi_t(s)|\ge c.$$  
In conclusion, $S_k^{\varepsilon_k}:\Sigma_\ddd^{k, \varepsilon_k}\to \mathbb R$  is   absolutely continuous and  $S_k^{\varepsilon_k}\in W^{1,p}(\Sigma_\ddd^{k, \varepsilon_k})$ for every $1<p<2$.

We  now   conclude the proof concerning the Hölder continuity of $f_k^{\varepsilon_k}$. 
 Because $S_k^{\varepsilon_k}\in L^\infty(\Sigma_\ddd^{k, \varepsilon_k})$, we also get that $f_k^{\varepsilon_k}\in L^\infty(\Sigma_\ddd^{k, \varepsilon_k})$.  Since  for every $p\in (1,+\infty)$, $ L^\infty(\Sigma_\ddd^{k, \varepsilon_k})\hookrightarrow W^{-1/p, p}(\Sigma_\ddd^{k, \varepsilon_k})$,  we deduce that $f_k^{\varepsilon_k}\in  W^{-1/p, p}(\Sigma_\ddd^{k, \varepsilon_k})$. 
It thus follows from  \cite[Theorem 5.3]{Sh68} that 
$$f_k^{\varepsilon_k}\in W^{s,p}(\Omega), \ \forall p\in (4,+\infty) \text{ and } s< \frac 12 +\frac 2p.$$
Recall that $W^{s,p}(\Omega) \hookrightarrow \mathcal{C}^{0,\alpha}(\overline{\Omega})$ for all $0< \alpha \le  s -  {2}/{p}$. 
Hence, 
for every {$\alpha \in (0,1/2)$},
$f_k^{\varepsilon_k} \in    \ccc^{0,\alpha}(\overline{\Omega})$. 

Finally, the last boundary condition in \eqref{eq:fk} is proved in~\cite[Corollary 2.11]{CaLeNo26}. 
The proof of the lemma is complete.  
\end{proof}

 \subsubsection{The function $\phi^{\bm \varepsilon}$}

Using the family of functions $(f_k^{\varepsilon_k})_{k \in \lb 1, N \rb}$, we can now construct the function $\phi^{\bm \varepsilon}$ so that it satisfies  \eqref{eq:phi}. Recall that we want $\phi^{\bm \varepsilon}$ to approximate the mean escape time $\mathbf e^{\bm \varepsilon}$ over $\Omega_{\bm \varepsilon}$. We look for $\phi^{\bm \varepsilon}$ over $\overline \Omega$ in the form:
\be\label{eq:defphi}
\phi^{\bm \varepsilon} = c_{\bm \varepsilon} \Big[1 + \sum_{k=1}^N K_k^{\varepsilon_k} f_k^{\varepsilon_k} \Big(1 + \sum_{j=1}^N b_{j,k} K_j^{\varepsilon_j} \Big)\Big],
\ee
where $c_{\bm \varepsilon}$ is a constant to be determined, and where the constants $b_{j,k}$ are   independent of $\bm \varepsilon$  and are also to be determined. 

\begin{remark}\label{re.am}
Following the insights from this section's analysis, extending the asymptotic expansions of $\mathbf p_i^{\bm \varepsilon}$ and $\mathbf e^{\bm \varepsilon}$ in Theorems \ref{thm:1} and \ref{thm:2} involves generalizing the definition of $\phi^{\bm \varepsilon}$. Specifically, the term $1 + \sum_{j=1}^N b_{j,k} K_j^{\varepsilon_j}$ is replaced by a higher-degree polynomial function of the variable $\mathbf K^{\bm \varepsilon}=(K_1^{\varepsilon_1},\ldots, K_N^{\varepsilon_N})$, see the analysis  carried out in \cite{CaLeNo26}. 
\end{remark}

To find $c_{\bm \varepsilon}$ and the constants $b_{j,k}$, we use Lemma \ref{lem:fk} and we proceed as follows. 
First we have that  $\partial_{n_\Omega} \phi^{\bm \varepsilon} = 0$ on $\Sigma_\nnn^{\bm \varepsilon}$, whatever the choices of the contants $c_{\bm \varepsilon}$ and $b_{j,k}$. On the other hand,  we have: 
\bes
\Delta\phi^{\bm \varepsilon} = -c_{\bm \varepsilon}\frac{\pi}{|\Omega|}\sum_{k=1}^NK_k^{\varepsilon_k}\Big(1+\sum_{j=1}^Nb_{j,k}K_j^{\varepsilon_j}\Big) \text{ in } \Omega. 
\ees
We thus choose $c_{\bm \varepsilon}$ such that:
\be\label{eq:defceps}
c_{\bm \varepsilon}^{-1} = \frac{\pi}{|\Omega|}\sum_{k=1}^NK_k^{\varepsilon_k}\Big(1+\sum_{j=1}^Nb_{j,k}K_j^{\varepsilon_j}\Big)
\ee
so that   $\Delta \phi^{\bm \varepsilon} = -1$ in $\Omega$. Note that $c_{\bm \varepsilon} > 0$ for every sufficiently small  $|\bm \varepsilon|>0$ and that 
\be\label{eq.est0c}
c_{\bm \varepsilon}^{-1}=  \frac{\pi}{|\Omega|} \sum_{k=1}^NK_k^{\varepsilon_k} (1+ O(|\mathbf K^{\bm \varepsilon}|)),
\ee 
which  shows that $c_{\bm \varepsilon}^{-1}$ is of order $|\mathbf K^{\bm \varepsilon}|$ as $|\bm \varepsilon\to 0$.  
We now choose the constants  $b_{j,k}$   so that
\bes
\phi^{\bm \varepsilon}|_{\Sigma_\ddd^{\bm \varepsilon}} = O_{L^\infty(\Sigma_\ddd^{\bm \varepsilon})}(|\mathbf K^{\bm \varepsilon}|).
\ees
Note that thanks to \eqref{eq.est0c},  the previous condition is equivalent to 
\bes
c_{\bm \varepsilon}^{-1}\phi^{\bm \varepsilon}|_{\Sigma_\ddd^{\bm \varepsilon}} = O_{L^\infty(\Sigma_\ddd^{\bm \varepsilon})}(|\mathbf K^{\bm \varepsilon}|^2).
\ees
Let $k \in \lb 1, N \rb$. Using \eqref{eq:defphi} and \eqref{eq:fk}, we deduce that  on $\Sigma_\ddd^{k, \varepsilon_k}$:
\be\label{eq:phibord1}
\begin{split}
    c_{\bm \varepsilon}^{-1}\phi^{\bm \varepsilon}|_{\Sigma_\ddd^{k, \varepsilon_k}} &= 1 + \ln\frac{\varepsilon_k}{2} \, K_k^{\varepsilon_k}\Big(1+\sum_{j=1}^Nb_{j,k}K_j^{\varepsilon_j}\Big)\\
    &\quad+ \sum_{\ell\neq k}K^{ \varepsilon_\ell}_\ell f_\ell ^0(x^{(k)})\Big(1+\sum_{j=1}^Nb_{j,\ell}K_j^{\varepsilon_j}\Big) + O_{L^\infty(\Sigma_\ddd^{\bm \varepsilon})}(|\bm \varepsilon|^{1/6}).
\end{split}
\ee
Since by definition  $K_k^{\varepsilon_k} = \big|\ln(\varepsilon_k/2)\big|^{-1}$ (see \eqref{eq:defK}), we obtain that on $\Sigma_\ddd^{k, \varepsilon_k}$:
\be\label{eq:phibord2}
\begin{split}
    c_{\bm \varepsilon}^{-1}\phi^{\bm \varepsilon}|_{\Sigma_\ddd^{k, \varepsilon_k}} &= -\sum_{j=1}^Nb_{j,k}K_j^{\varepsilon_j} + \sum_{j\neq k} K_j^{\varepsilon_j}f_j^0(x^{(k)}) + O_{L^\infty(\Sigma_\ddd^{\bm \varepsilon})}(|\mathbf K^{\bm \varepsilon}|^{2})\\
    &= - b_{k,k}K_k^{\varepsilon_k} + \sum_{j\neq k} K_j^{\varepsilon_j}(f_j^0(x^{(k)}) - b_{j,k}) + O_{L^\infty(\Sigma_\ddd^{\bm \varepsilon})}(|\mathbf K^{\bm \varepsilon}|^{2}).
\end{split}
\ee
We then finally choose for every  $j,k\in\lb1,N\rb$:
\bes
b_{j,k} = f_j^0(x^{(k)})  \text{ if }  j\neq k,  \text{ and } b_{j,j} = 0.
\ees
We thus have the following result. 

\begin{proposition}\label{prop:phi}
Let $(f_k^{\varepsilon_k})_{k \in \lb 1, N \rb}$ be defined in \eqref{eq:deffk}. The function
$$
\phi^{\bm \varepsilon} = c_{\bm \varepsilon}\Big[1+\sum_{k=1}^{N}K_k^{\varepsilon_k}f_k^{\varepsilon_k}\Big(1+\!\!\!\!\sum_{j=1,j\neq k}^N K_j^{\varepsilon_j}f_j^0(x^{(k)})\Big)\Big],  \text{ where }
c_{\bm \varepsilon}^{-1} = \frac{\pi}{|\Omega|}\sum_{k=1}^{N}K_k^{\varepsilon_k}\Big(1+\sum_{j=1,j\neq k}^NK_j^{\varepsilon_j}f_j^0(x^{(k)})\Big),
$$
satisfies \eqref{eq:phi}. Moreover, $ \phi^{\bm \varepsilon}\in\ccc^\infty(\overline\Omega\setminus\overline\Sigma_\ddd^{\bm \varepsilon}) \cap H^1(\Omega)$ and, {for all  $\alpha\in (0,1/2)$},  $\phi^{\bm \varepsilon} \in  \ccc^{0,\frac12-\alpha}(\overline\Omega)$. 
\end{proposition}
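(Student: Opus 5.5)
The proof follows from Lemma~\ref{lem:fk} together with the computation carried out just before the statement. The function $\phi^{\bm \varepsilon}$ is an affine combination, with constant coefficients, of the constant function $1$ and of the functions $f_k^{\varepsilon_k}$, $k\in\lb1,N\rb$. The plan is to check the three conditions of \eqref{eq:phi} in order, and then to transfer the regularity of the $f_k^{\varepsilon_k}$ to $\phi^{\bm \varepsilon}$.

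\textbf{Equation in $\Omega$.} By \eqref{eq:fk}, $\Delta f_k^{\varepsilon_k}=-\pi/|\Omega|$ in $\Omega$. Hence
\[
\Delta\phi^{\bm \varepsilon}=-c_{\bm \varepsilon}\frac{\pi}{|\Omega|}\sum_k K_k^{\varepsilon_k}\Big(1+\sum_{j\neq k}K_j^{\varepsilon_j}f_j^0(x^{(k)})\Big)=-1,
\]
by the definition of $c_{\bm \varepsilon}$.

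Here $c_{\bm \varepsilon}$ is well defined and positive for $|\bm \varepsilon|$ small. The reason is that the $f_j^0(x^{(k)})$, $j\neq k$, are finite constants, since $x^{(k)}\neq x^{(j)}$ and $R_j^0$ is smooth. Consequently \eqref{eq.est0c} holds: $c_{\bm \varepsilon}^{-1}\asymp|\mathbf K^{\bm \varepsilon}|$.

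\textbf{Neumann condition.} The condition $\partial_n\phi^{\bm \varepsilon}=0$ on $\Sigma_\nnn^{\bm \varepsilon}$ follows directly from the second line of \eqref{eq:fk}.

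\textbf{Dirichlet part.} This is the only step needing care, and it is already essentially done in \eqref{eq:phibord1}--\eqref{eq:phibord2}. Fix $k$ and restrict to $\Sigma_\ddd^{k,\varepsilon_k}$. Insert the boundary values from \eqref{eq:fk}: $f_k^{\varepsilon_k}=\ln(\varepsilon_k/2)+O(|\bm\varepsilon|^{1/6})$ there, and $f_\ell^{\varepsilon_\ell}=f_\ell^0(x^{(k)})+O(|\bm\varepsilon|^{1/6})$ for $\ell\neq k$. Then use $K_k^{\varepsilon_k}\ln(\varepsilon_k/2)=-1$.

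Two terms cancel, namely the constant $1$ and the order-$K_j^{\varepsilon_j}$ terms $K_j^{\varepsilon_j}(f_j^0(x^{(k)})-b_{j,k})$. What remains is bounded by $C|\mathbf K^{\bm\varepsilon}|^2+C|\bm\varepsilon|^{1/6}$, where the coefficients are uniform because the $b_{j,k}$ are fixed.

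The main point to check is that the error $O(|\bm \varepsilon|^{1/6})$ is $O(|\mathbf K^{\bm \varepsilon}|^2)$. This holds because $|\bm\varepsilon|^{1/6}$ decays polynomially while $|\mathbf K^{\bm \varepsilon}|^2$ decays only logarithmically. More precisely, $\max_k\varepsilon_k^{1/6}\le C\,(\max_k|\ln(\varepsilon_k/2)|^{-1})^{2}$, since $s^{1/6}|\ln(s/2)|^2\to0$ as $s\to0$. We then multiply by $c_{\bm\varepsilon}=O(|\mathbf K^{\bm\varepsilon}|^{-1})$. This yields $\phi^{\bm\varepsilon}=O_{L^\infty(\Sigma_\ddd^{\bm\varepsilon})}(|\mathbf K^{\bm\varepsilon}|)$, as required.

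\textbf{Regularity.} Lemma~\ref{lem:fk} gives $f_k^{\varepsilon_k}\in\ccc^\infty(\overline\Omega\setminus\overline\Sigma_\ddd^{k,\varepsilon_k})\cap H^1(\Omega)\cap\ccc^{0,\alpha}(\overline\Omega)$ for every $\alpha\in(0,1/2)$. Since $\overline\Omega\setminus\overline\Sigma_\ddd^{\bm\varepsilon}\subset\overline\Omega\setminus\overline\Sigma_\ddd^{k,\varepsilon_k}$ for each $k$, each $f_k^{\varepsilon_k}$ is smooth on $\overline\Omega\setminus\overline\Sigma_\ddd^{\bm\varepsilon}$. A finite linear combination of such functions, plus a constant, stays in each of these spaces. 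Writing $\frac12-\alpha$ in place of $\alpha$ gives exactly the stated Hölder class $\ccc^{0,\frac12-\alpha}(\overline\Omega)$.
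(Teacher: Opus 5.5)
Your proof is correct and follows the paper's own argument: the choice of $c_{\bm\varepsilon}$ from $\Delta f_k^{\varepsilon_k}=-\pi/|\Omega|$, the cancellation in \eqref{eq:phibord1}--\eqref{eq:phibord2} via $K_k^{\varepsilon_k}\ln(\varepsilon_k/2)=-1$ and $b_{j,k}=f_j^0(x^{(k)})$, multiplication by $c_{\bm\varepsilon}=O(|\mathbf K^{\bm\varepsilon}|^{-1})$, and regularity inherited from Lemma~\ref{lem:fk}. Your explicit check that $|\bm\varepsilon|^{1/6}=O(|\mathbf K^{\bm\varepsilon}|^2)$, via the maximal $\varepsilon_k$, fills in a step the paper uses silently when passing from \eqref{eq:phibord1} to \eqref{eq:phibord2}.
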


Proposition~\ref{prop:phi} implies the following corollary which shows that  $\phi^{\bm \varepsilon}$ is a sufficiently good approximation of $\mathbf e^{\bm \varepsilon}$ (see also Remark \ref{re.am}). 

\begin{corollary}\label{cor:phie}
Let $\mathbf e^{\bm \varepsilon}$ be  the solution to \eqref{eq:E2} in $H^1(\Omega)$.
Then, we have:
\bes
\phi^{\bm \varepsilon} = \mathbf e^{\bm \varepsilon} + O_{L^\infty(\Omega)}(|\mathbf K^{\bm \varepsilon}|).
\ees 
\end{corollary}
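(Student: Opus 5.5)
The natural approach is a maximum principle for the mixed problem applied to the difference $u^{\bm \varepsilon} := \phi^{\bm \varepsilon} - \mathbf e^{\bm \varepsilon}$. By Proposition~\ref{prop:phi} and \eqref{eq:E2}, $u^{\bm \varepsilon}\in H^1(\Omega)\cap \ccc^{0,\frac12-\alpha}(\overline\Omega)\cap \ccc^\infty(\overline\Omega\setminus\overline\Sigma_\ddd^{\bm \varepsilon})$ solves
\bes
\Delta u^{\bm \varepsilon} = 0 \text{ in } \Omega,\qquad \partial_n u^{\bm \varepsilon}=0 \text{ on } \Sigma_\nnn^{\bm \varepsilon},\qquad u^{\bm \varepsilon} = \phi^{\bm \varepsilon}|_{\Sigma_\ddd^{\bm \varepsilon}} = O_{L^\infty(\Sigma_\ddd^{\bm \varepsilon})}(|\mathbf K^{\bm \varepsilon}|) \text{ on } \Sigma_\ddd^{\bm \varepsilon}.
\ees
The statement therefore reduces to the bound $\sup_{\overline\Omega}|u^{\bm \varepsilon}|\le \sup_{\Sigma_\ddd^{\bm \varepsilon}}|u^{\bm \varepsilon}|$.

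I would prove this bound probabilistically, reusing the argument of Proposition~\ref{prop:FeynmanKac}~$iii)$. Fix $x\in\overline\Omega\setminus\overline\Sigma_\ddd^{\bm \varepsilon}$ and $r>0$ small. Since $u^{\bm \varepsilon}$ is smooth on $\overline\Omega\setminus\overline\Sigma_\ddd^{\bm \varepsilon}$, harmonic there, and has vanishing normal derivative on the reflecting part, It\^o's formula gives $u^{\bm \varepsilon}(x)=\E_x[u^{\bm \varepsilon}(X_{t\wedge\tau_{\vvv_r}})]$; the local-time term vanishes. Next let $t\to\infty$ and $r\to0$. Dominated convergence applies because $u^{\bm \varepsilon}$ is bounded and continuous on $\overline\Omega$; we also use $\tau_{\overline\Sigma_\ddd^{\bm \varepsilon}}<\infty$ a.s. (Proposition~\ref{prop:FeynmanKac}~$i)$) and the continuity of trajectories. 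This yields $u^{\bm \varepsilon}(x)=\E_x[u^{\bm \varepsilon}(X_{\tau_{\overline\Sigma_\ddd^{\bm \varepsilon}}})]$, and since $X_{\tau}\in\overline\Sigma_\ddd^{\bm \varepsilon}$ we get
\bes
|u^{\bm \varepsilon}(x)|\le \sup_{\overline\Sigma_\ddd^{\bm \varepsilon}}|u^{\bm \varepsilon}| = \sup_{\overline\Sigma_\ddd^{\bm \varepsilon}}|\phi^{\bm \varepsilon}|.
\ees
The supremum over the closure equals the essential supremum over $\Sigma_\ddd^{\bm \varepsilon}$ by continuity of $\phi^{\bm \varepsilon}$. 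The inequality extends to $x\in\overline\Sigma_\ddd^{\bm \varepsilon}$ trivially, since there $\mathbf e^{\bm \varepsilon}=0$.

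The step needing care is confirming that the $L^\infty$ bound on $\Sigma_\ddd^{\bm \varepsilon}$ in \eqref{eq:phi} is genuinely uniform, with a constant independent of $\bm \varepsilon$. This comes from \eqref{eq:phibord2}, which gives $c_{\bm \varepsilon}^{-1}\phi^{\bm \varepsilon}=O(|\mathbf K^{\bm \varepsilon}|^2)$, combined with \eqref{eq.est0c}, which gives $c_{\bm \varepsilon}=O(|\mathbf K^{\bm \varepsilon}|^{-1})$. One must also check that the $O(|\bm\varepsilon|^{1/6})$ remainder from Lemma~\ref{lem:fk} is $O(|\mathbf K^{\bm \varepsilon}|^2)$. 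It is, since $|\bm\varepsilon|^{1/6}$ decays faster than any power of $1/|\ln|\bm\varepsilon||$; but this requires each $K_k^{\varepsilon_k}$ to be comparable to a power of $|\mathbf K^{\bm\varepsilon}|$. The only obstacle I anticipate is exactly this bookkeeping: if some $\varepsilon_k\ll\varepsilon_j$, the term $|\bm\varepsilon|^{1/6}$ is still bounded by $e^{-c/|\mathbf K^{\bm\varepsilon}|}$, which is $O(|\mathbf K^{\bm\varepsilon}|^2)$ regardless of the ratios, so the bound is fine. Given this uniform Dirichlet-data estimate, the maximum principle above concludes the proof.
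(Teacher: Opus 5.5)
Your proposal is correct and follows essentially the paper's route. The paper reruns the It\^o/Dynkin argument of Proposition~\ref{prop:FeynmanKac}~$ii)$ directly on $\phi^{\bm\varepsilon}$ to get $\phi^{\bm\varepsilon}(x)=\E_x[\tau_{\overline\Sigma_\ddd^{\bm\varepsilon}}]+\E_x[\phi^{\bm\varepsilon}(X_{\tau_{\overline\Sigma_\ddd^{\bm\varepsilon}}})]$, while you apply the same argument to the harmonic difference $\phi^{\bm\varepsilon}-\mathbf e^{\bm\varepsilon}$, which is the same computation rearranged; your extra check that the $O(|\bm\varepsilon|^{1/6})$ remainder is $O(|\mathbf K^{\bm\varepsilon}|^2)$ (indeed $|\bm\varepsilon|^{1/6}\le Ce^{-1/(6|\mathbf K^{\bm\varepsilon}|)}$) is valid and makes explicit a step the paper uses silently between \eqref{eq:phibord1} and \eqref{eq:phibord2}.
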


\begin{proof}
Following the proof of Proposition~\ref{prop:FeynmanKac} $ii)$,  and since $\Delta \phi^{\bm \varepsilon} = -1$ in $\Omega$, $\phi^{\bm \varepsilon}\in\ccc^\infty(\overline\Omega\setminus\overline\Sigma_\ddd^{\bm \varepsilon})$,  and $\phi^{\bm \varepsilon}$ is continuous on $\overline{\Omega}$ (see Proposition \ref{prop:phi}), we deduce that: 
\bes
\forall x\in\overline\Omega\setminus\overline\Sigma_\ddd^{\bm \varepsilon},\ \phi^{\bm \varepsilon}(x) = \E_x[\tau_{\overline\Sigma_\ddd^{\bm \varepsilon}}] + O_{L^\infty(\Omega)}(|\mathbf K^{\bm \varepsilon}|).
\ees
The estimate trivially extends to $x \in \overline\Sigma_\ddd^{\bm \varepsilon}$, which completes the proof of the corollary.
\end{proof}


\subsubsection{Approximation of the escape location probabilities $\mathbf p_i^{\bm \varepsilon}$} 
This section introduces a construction of a sufficiently accurate approximation for the escape location probabilities $\mathbf p_i^{\bm \varepsilon}$ (see Proposition~\ref{prop:FeynmanKac} and  \eqref{eq:E1}), taking advantage of the properties of $\phi^{\bm \varepsilon}$. 
To do so, we proceed as follows. 

Since $\mathbf{p}_1 \equiv 1$ for $N=1$, we restrict our analysis to the case $N \ge 2$. For each $i \in \lb 1, N \rb$, let $\phi^{\bm \varepsilon}_i$ denote the function constructed to approximate the mean escape time from $\Omega$ when the $i$-th window, $\Sigma_\ddd^{i,\varepsilon_i}$, is removed. In this configuration, the remaining $N-1$ escape windows are $\{\Sigma_\ddd^{j, \varepsilon_j}\}_{j \neq i}$. By the previous construct, we thus consider 
\bes
\phi^{\bm \varepsilon}_i = c_{\bm \varepsilon}^i\Big[1+\sum_{k=1,k\neq i}^NK_k^{\varepsilon_k}f_k^{\varepsilon_k}\Big(1+\sum_{j=1,j\notin\{i,k\}}^NK_j^{\varepsilon_j}f_j^0(x^{(k)})\Big)\Big],
\ees
where
\bes
(c_{\bm \varepsilon}^i)^{-1} = \frac{\pi}{|\Omega|}\sum_{k=1,k\neq i}^NK_k^{\varepsilon_k}\Big(1+\sum_{j=1,j\notin\{i,k\}}^NK_j^{\varepsilon_j}f_j^0(x^{(k)})\Big),
\ees
where we recall that by convention $\sum_{\emptyset}=0$.
Note that  $(c_{\bm \varepsilon}^i)^{-1} = O (|\mathbf K^{\bm \varepsilon}|)$. 
 By Proposition~\ref{prop:phi}, we have:
\bes
\phi^{\bm \varepsilon}_i = O_{L^\infty(\Sigma_\ddd^{\bm \varepsilon}\setminus\Sigma_\ddd^{i, \varepsilon_i})}(|\mathbf K^{\bm \varepsilon}|)\qquad \text{ on } \Sigma_\ddd^{\bm \varepsilon}\setminus   \Sigma_\ddd^{i, \varepsilon_i} = \bigsqcup_{j=1,j\neq i}^N \Sigma_\ddd^{j, \varepsilon_j},
\ees
and $\partial_{n_\Omega} \phi^{\bm \varepsilon}_i= 0 \text{ on } \Sigma_\nnn^{\bm \varepsilon}$. 
Moreover, using \eqref{eq:fk}, we obtain that on $\Sigma_\ddd^{i, \varepsilon_i}$:
\be\label{eq:phiIbord}
(c_{\bm \varepsilon}^i)^{-1}{\phi^{\bm \varepsilon}_i}_{|\Sigma_\ddd^{i, \varepsilon_i}} = 1 + \sum_{k=1,k\neq i}^N K_k^{\varepsilon_k}f_k^0(x^{(i)}) + O_{L^\infty(\Sigma_\ddd^{i, \varepsilon_i})}(|\mathbf K^{\bm \varepsilon}|^2).
\ee 
We consider the function $\psi^{\bm \varepsilon}_i$ defined by:
\bes
\psi^{\bm \varepsilon}_i = \frac{\phi^{\bm \varepsilon}_i - \phi^{\bm \varepsilon}}{c_{\bm \varepsilon}^i\Big(1 + \sum_{k=1,k\neq i}^NK_k^{\varepsilon_k}f_k^0(x^{(i)})\Big)}.
\ees
Combining the previous estimates yields
\be\label{eq:psi}
\left\{
    \begin{aligned}
        \Delta \psi^{\bm \varepsilon}_i &= 0 &&\text{in } \Omega,\\
        \partial_n \psi^{\bm \varepsilon}_i &= 0 &&\text{on } \Sigma_\nnn^{\bm \varepsilon},\\
        \psi^{\bm \varepsilon}_i &= \indic_{\Sigma_\ddd^{i, \varepsilon_i}} +O_{L^\infty(\Sigma_\ddd^{\bm \varepsilon})}(|\mathbf K^{\bm \varepsilon}|^2) &&\text{on } \Sigma_\ddd^{\bm \varepsilon}
    \end{aligned}
\right.
\ee

\begin{proposition}\label{prop:psi}
For all $i \in \lb 1, N \rb$, as $\varepsilon \to 0$, we have
\bes
\mathbf p_i^{\bm \varepsilon} = \psi^{\bm \varepsilon}_i + O_{L^\infty(\Omega)}(|\mathbf K^{\bm \varepsilon}|^2).
\ees
\end{proposition}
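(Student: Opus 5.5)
The plan is to compare $\mathbf p_i^{\bm \varepsilon}$ and $\psi^{\bm \varepsilon}_i$ through a maximum principle. The mechanism is the same Feynman-Kac argument used in Proposition~\ref{prop:FeynmanKac} $iii)$ and in Corollary~\ref{cor:phie}. Set $u = \psi^{\bm \varepsilon}_i - \mathbf p_i^{\bm \varepsilon}$. By \eqref{eq:psi} and \eqref{eq:E1}, $u$ is harmonic in $\Omega$ and satisfies $\partial_n u = 0$ on $\Sigma_\nnn^{\bm \varepsilon}$. On $\Sigma_\ddd^{\bm \varepsilon}$ we have $u = O_{L^\infty(\Sigma_\ddd^{\bm \varepsilon})}(|\mathbf K^{\bm \varepsilon}|^2)$, since the indicator $\indic_{\Sigma_\ddd^{i,\varepsilon_i}}$ cancels exactly. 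The goal is therefore the bound
\[
\sup_{\overline\Omega}|u| \le \sup_{\Sigma_\ddd^{\bm \varepsilon}}|u|.
\]

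\textbf{Step 1: regularity of $\psi^{\bm \varepsilon}_i$.} First I will check that $\psi^{\bm \varepsilon}_i$ has the regularity needed to run the argument. Since $\phi^{\bm \varepsilon}$ and $\phi^{\bm \varepsilon}_i$ are built from the $f_k^{\varepsilon_k}$, Lemma~\ref{lem:fk} and Proposition~\ref{prop:phi} give the following properties for both functions, and hence for $\psi^{\bm \varepsilon}_i$ (the normalising denominator is a nonzero constant):
\[
\psi^{\bm \varepsilon}_i \in \ccc^\infty(\overline\Omega\setminus\overline\Sigma_\ddd^{\bm \varepsilon})\cap\ccc^{0,\alpha}(\overline\Omega).
\]
Combined with \eqref{eq:regularity} and \eqref{eq:holder} for $\mathbf p_i^{\bm \varepsilon}$, this shows that $u$ is continuous on $\overline\Omega$ and smooth away from $\overline\Sigma_\ddd^{\bm \varepsilon}$.

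\textbf{Step 2: probabilistic representation of $u$.} Fix $x\in\overline\Omega\setminus\overline\Sigma_\ddd^{\bm \varepsilon}$ and let $\vvv_r$ be the neighbourhoods from the proof of Proposition~\ref{prop:FeynmanKac}. Apply It\^o's formula to $u(X_{t\wedge\tau_{\vvv_r}})$. The drift term vanishes because $\Delta u=0$, and the local-time term vanishes because $\partial_n u=0$ on $\Sigma_\nnn^{\bm \varepsilon}$ and the process stays away from $\overline\Sigma_\ddd^{\bm \varepsilon}$ before $\tau_{\vvv_r}$. This gives $u(x)=\E_x[u(X_{t\wedge\tau_{\vvv_r}})]$. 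Now let $t\to\infty$ using $\tau_{\vvv_r}<\infty$ a.s. (Proposition~\ref{prop:FeynmanKac} $i)$), then let $r\to0$. Both limits pass by dominated convergence, since $u$ is bounded and continuous on $\overline\Omega$. We obtain
\[
u(x)=\E_x\big[u(X_{\tau_{\overline\Sigma_\ddd^{\bm \varepsilon}}})\big].
\]
Since $X_{\tau}\in\overline\Sigma_\ddd^{\bm \varepsilon}$, this yields $|u(x)|\le \sup_{\overline\Sigma_\ddd^{\bm \varepsilon}}|u|$. By continuity, the same bound holds for $x\in\overline\Sigma_\ddd^{\bm \varepsilon}$.

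\textbf{Step 3: the main obstacle.} The remaining difficulty is to ensure the boundary estimate from \eqref{eq:psi} really holds uniformly on the closure $\overline\Sigma_\ddd^{\bm \varepsilon}$, which is where $X_\tau$ lands. The estimate is stated in $L^\infty(\Sigma_\ddd^{\bm \varepsilon})$, while the endpoints of the windows carry the loss of regularity. Both $\psi^{\bm \varepsilon}_i$ and $\mathbf p_i^{\bm \varepsilon}$ are continuous on $\overline\Omega$, so the $L^\infty$ bound on the open window extends to its closure, since the supremum of a continuous function on an open set equals its supremum on the closure.

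To be safe, I will also verify that \eqref{eq:psi} follows cleanly from \eqref{eq:phiIbord} and Proposition~\ref{prop:phi}. The only subtle point is that the errors on $\Sigma_\ddd^{\bm \varepsilon}$ are of size $O(|\mathbf K^{\bm \varepsilon}|)$ and get multiplied by $(c^i_{\bm \varepsilon})^{-1}=O(|\mathbf K^{\bm \varepsilon}|)$. This uses that $c_{\bm \varepsilon}/c^i_{\bm \varepsilon}$ is bounded, which holds because both constants are of order $|\mathbf K^{\bm \varepsilon}|^{-1}$. With this checked, Step 2 gives $\mathbf p_i^{\bm \varepsilon}=\psi^{\bm \varepsilon}_i+O_{L^\infty(\Omega)}(|\mathbf K^{\bm \varepsilon}|^2)$, which is Proposition~\ref{prop:psi}.
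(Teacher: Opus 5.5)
Your proposal is correct and is essentially the paper's proof. The paper runs the It\^o/Feynman--Kac argument of Proposition~\ref{prop:FeynmanKac} (through Corollary~\ref{cor:phie}) on $\psi^{\bm\varepsilon}_i$, using only that it is harmonic, Neumann on $\Sigma_\nnn^{\bm\varepsilon}$, smooth off $\overline\Sigma_\ddd^{\bm\varepsilon}$ and continuous on $\overline\Omega$; you apply the same argument to the difference $\psi^{\bm\varepsilon}_i-\mathbf p_i^{\bm\varepsilon}$ as a maximum principle, which handles the window endpoints cleanly by continuity.

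One small correction to your bookkeeping: $c^i_{\bm\varepsilon}$ need not be of order $|\mathbf K^{\bm\varepsilon}|^{-1}$, since it is much larger when $K_i^{\varepsilon_i}$ dominates the other $K_k^{\varepsilon_k}$. This does not matter, because you only need three facts: $(c^i_{\bm\varepsilon})^{-1}\phi^{\bm\varepsilon}_i=O(|\mathbf K^{\bm\varepsilon}|^2)$ on $\Sigma_\ddd^{j,\varepsilon_j}$ for $j\neq i$, $c_{\bm\varepsilon}^{-1}\phi^{\bm\varepsilon}=O(|\mathbf K^{\bm\varepsilon}|^2)$ on $\Sigma_\ddd^{\bm\varepsilon}$, and $c_{\bm\varepsilon}/c^i_{\bm\varepsilon}\le 1+O(|\mathbf K^{\bm\varepsilon}|)$, which always holds.
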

\begin{proof}
Note that by Proposition~\ref{prop:phi} and the definition of $\psi^{\bm \varepsilon}_i$ just above,  $\psi^{\bm \varepsilon}\in\ccc^\infty(\overline\Omega\setminus\overline\Sigma_\ddd^{\bm \varepsilon})$  and $\psi^{\bm \varepsilon}$ is continuous on $\overline{\Omega}$. Using  \eqref{eq:psi}, we deduce the assertion of   Proposition \ref{prop:psi}   with the same arguments as those employed in the proof of Corollary~\ref{cor:phie}. 
\end{proof}


 
\subsection{End of the proofs of  Theorems \ref{thm:1} and \ref{thm:2} via the approximating method}\label{sec:thmThomas}

\subsubsection{A preliminary estimate}
Recall from Lemma \ref{lem:fk} that for all $k \in \lb 1, N \rb$, 
$$f_k^{\varepsilon_k} = f_k^0 + O(\varepsilon_k^{1/3})\text{ in } L^2(\Omega),$$
 where $f_k^0$ is given by \eqref{eq:fk0}. To prove our main theorems, however, we require estimates on the difference $f_k^{\varepsilon_k} - f_k^0$ in a stronger topology. This is the subject of the following lemma.

\begin{lemma}\label{le.Pes}
For all $k \in \lb 1, N \rb$ {and $\alpha \in (0,1/3]$}, it holds  as $\varepsilon_k\to 0$:
$$f_k^{\varepsilon_k} = f_k^0 + O_{L^\infty(\Omega\setminus \overline B(x^{(k)},\varepsilon_k^{1-\alpha}))}(\varepsilon_k^\alpha).$$
\end{lemma}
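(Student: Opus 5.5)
We split $f_k^{\varepsilon_k}-f_k^0=(S_k^{\varepsilon_k}-\ln|\cdot-x^{(k)}|)+(R_k^{\varepsilon_k}-R_k^0)$, with $S_k^{\varepsilon_k}$ as in \eqref{eq.Ske}, and bound each piece in $L^\infty$ on $\Omega\setminus\overline B(x^{(k)},\varepsilon_k^{1-\alpha})$.

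\textbf{The regular part.} By \eqref{eq.Rke-0}, $R_k^{\varepsilon_k}-R_k^0=O(\varepsilon_k^{1/3})$ in $W^{1,3}(\Omega)$. In two dimensions $W^{1,3}(\Omega)\hookrightarrow\ccc^{0,1/3}(\overline\Omega)\subset L^\infty(\Omega)$, so this difference is $O(\varepsilon_k^{1/3})$ in $L^\infty(\Omega)$. Since $\alpha\le 1/3$ and $\varepsilon_k<1$, this is $O(\varepsilon_k^{\alpha})$ on all of $\Omega$. This is the only place where the restriction $\alpha\le1/3$ enters.

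\textbf{The logarithmic part.} Recall $w_k^{\varepsilon_k}\ge 0$ is supported in $[T^-_{k,\varepsilon_k},T^+_{k,\varepsilon_k}]$, hence in $\{|t|\le 2\varepsilon_k\}$ for $\varepsilon_k$ small. The normalized measure $w_k^{\varepsilon_k}(t)\,dt/\|w_k^{\varepsilon_k}\|_{L^1}$ is a probability measure, so
\[
S_k^{\varepsilon_k}(x)-\ln|x-x^{(k)}|=\frac{1}{\|w_k^{\varepsilon_k}\|_{L^1(\mathbb R)}}\int w_k^{\varepsilon_k}(t)\,\ln\frac{|x-\psi_k(t)|}{|x-x^{(k)}|}\,dt .
\]
Since $\psi_k(0)=x^{(k)}$ and $\psi_k$ is smooth, $|\psi_k(t)-x^{(k)}|\le C|t|\le 2C\varepsilon_k$ on the support. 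For $|x-x^{(k)}|\ge\varepsilon_k^{1-\alpha}$ this gives
\[
\Big|\frac{|x-\psi_k(t)|}{|x-x^{(k)}|}-1\Big|\le\frac{|\psi_k(t)-x^{(k)}|}{|x-x^{(k)}|}\le 2C\varepsilon_k^{\alpha}.
\]
This is at most $1/2$ for $\varepsilon_k$ small, so using $|\ln(1+u)|\le 2|u|$ for $|u|\le1/2$ the integrand is bounded by $4C\varepsilon_k^{\alpha}$ uniformly in $t$ and in such $x$. Integrating against the probability measure gives the bound $O(\varepsilon_k^\alpha)$, uniformly on $\Omega\setminus\overline B(x^{(k)},\varepsilon_k^{1-\alpha})$.

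Adding the two estimates proves the lemma. There is no real obstacle; the only point needing care is that the Taylor bound on the logarithm is uniform. This holds because the excised ball radius $\varepsilon_k^{1-\alpha}$ dominates the window size $\varepsilon_k$ by the factor $\varepsilon_k^{-\alpha}$.
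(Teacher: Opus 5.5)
Your proof is correct and follows the paper's argument essentially step by step. You use the same splitting $f_k^{\varepsilon_k}=S_k^{\varepsilon_k}+R_k^{\varepsilon_k}$ and the Sobolev embedding $W^{1,3}(\Omega)\hookrightarrow\mathcal C^{0,1/3}(\overline\Omega)$ applied to \eqref{eq.Rke-0} for the regular part. You then use the same uniform estimate $\ln|x-\psi_k(t)|=\ln|x-x^{(k)}|+O(\varepsilon_k^\alpha)$ (reverse triangle inequality plus $|\ln(1+u)|\le 2|u|$), integrated against the normalized nonnegative weight $w_k^{\varepsilon_k}$. As in the paper, the restriction $\alpha\le 1/3$ enters only through $R_k^{\varepsilon_k}$, while the logarithmic part holds for every $\alpha\in(0,1)$.
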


\begin{proof}
Let $k \in \lb 1, N \rb$. Recall the expression of $f_k^{\varepsilon_k}$ from \eqref{eq:deffk} (see also \eqref{eq.Ske} and \eqref{eq.wke}):
\bes
f_k^{\varepsilon_k}  = S_k^{\varepsilon_k}  + R_k^{\varepsilon_k}, \ \text{ where } S_k^{\varepsilon_k}(\cdot) = \frac{1}{\Vert w_k^{\varepsilon_k}\Vert_{L^1(  \mathbb R)}}   \int_{T_{k,\varepsilon_k}^-}^{T_{k,\bm  \varepsilon}^+}  w_k^{\varepsilon_k}(t) \ln |\cdot -\psi_k(t)|d t.
\ees
 Using \eqref{eq.Rke-0} and  by  Sobolev embedding $W^{1,3}(\Omega) \hookrightarrow C^{0,1/3}(\overline{\Omega})$, 
\be\label{eq.Repm}
R_k^{\varepsilon_k} = R_k^0 + O_{L^\infty(\Omega)}(\varepsilon_k^{1/3}).
\ee
We now study the convergence of  $S_k^{\varepsilon_k}(\cdot) $ towards  $  \ln|\cdot-x^{(k)}|$ in $L^\infty(\Omega\setminus B(x^{(k)},\varepsilon_k^{1-\alpha}))$ for  $\alpha \in (0,1/3)$. 
More precisely, we aim to prove  that for  every $\alpha \in (0,1)$, there exists $C>0$ such that for all $\varepsilon_k>0$ small enough, we have:
\be\label{eq.s=ln}
\forall x\in \Omega\setminus \overline B(x^{(k)},\varepsilon_k^{1-\alpha}), \ \big | S_k^{\varepsilon_k}(x)-\ln |x- x^{(k)}| \big | \le C \varepsilon_k^\alpha.
\ee 
We claim that for every $\alpha \in (0,1)$, it holds for all $\varepsilon_k>0$ small enough and uniformly   in $
x\in \Omega\setminus B(x^{(k)},\varepsilon_k^{1-\alpha})$ and  in  $t\in[T^-_{k,\bm\varepsilon},T^+_{k,\bm\varepsilon}]$:
\be \label{eq.cla1}
\ln|x-\psi_k(t)|
=
\ln|x-x^{(k)}|
+
O(\varepsilon_k^\alpha).
\ee
We prove this claim below.
Admitting \eqref{eq.cla1}, we deduce that:
\begin{equation*}
\begin{aligned}
S_k^{\bm\varepsilon}(x)
&=
\frac{\ln|x-x^{(k)}|}{ \Vert w_k^{\varepsilon_k}\Vert_{L^1(  \mathbb R)}  }
 \int_{T_{k,\varepsilon_k}^-  }^{T_{k,\varepsilon_k}^+}        
  w_k^{\varepsilon_k} (t) \, dt +
\frac{O(\varepsilon_k^\alpha) }{\Vert w_k^{\varepsilon_k}\Vert_{L^1(  \mathbb R)}} 
 \int_{T_{k,\varepsilon_k}^-  }^{T_{k,\varepsilon_k}^+}        
  w_k^{\varepsilon_k} (t) 
\,dt =\ln|x-x^{(k)}| + O(\varepsilon_k^\alpha).
\end{aligned}
\end{equation*}
 This achieves the proof of \eqref{eq.s=ln}. Together with \eqref{eq.Repm}, this ends the proof of the lemma.   It   only remains to prove the claim \eqref{eq.cla1}.
 Let $\alpha\in (0,1)$. 
We have:  
$$
|x-\psi_k(t)|
=
|x-x^{(k)}|
\left|
\frac{x-x^{(k)}}{|x-x^{(k)}|}
-
\frac{\psi_k(t)-x^{(k)}}{|x-x^{(k)}|}
\right|.
$$
Set $
e(x)
:=
 (x-x^{(k)})/ {|x-x^{(k)}|}$ 
and note that  $|e(x)|=1$. Then 
$$
\ln|x-\psi_k(t)|
=
\ln|x-x^{(k)}|
+
\ln\left|
e(x)-
v(t,x)
\right|,  \text{ where } \ 
v(t,x)
:=
\frac{\psi_k(t)-x^{(k)}}{|x-x^{(k)}|}.
$$
Since $
|\psi_k(t)-x^{(k)}|
\le C_k\varepsilon_k$ (for some $C_k>0$ independent of $t\in [T^-_{k,\bm\varepsilon},T^+_{k,\bm\varepsilon}]$) and 
$ 
|x-x^{(k)}|
\ge
\varepsilon_k^{1-\alpha}
$, 
we deduce that
$ 
|v(t,x)|
\le
C_k\varepsilon_k^\alpha$. 
As a result, 
$$
\big||e(x)-v(t,x)|-1\big|
\le |v(t,x)|
\le C_k\varepsilon_k^\alpha.
$$
Consequently, it holds that:
 $
|e(x)-v(t,x)|
=
1+O(\varepsilon_k^\alpha)$, 
uniformly with respect to $
x\in\Omega\setminus B(x^{(k)},\varepsilon_k^{1-\alpha})$ and $
t\in[T^-_{k,\bm\varepsilon},T^+_{k,\bm\varepsilon}]$. 
Therefore, we have:
$$
\ln\left|
e(x)-
 v(t,x)
\right|
=
\ln\big [1+O(\varepsilon_k^\alpha)\big ].
$$
Using the estimate $|\ln (1+z)|\le 2|z|$ for $|z|<1/2$, we get that 
$\ln|
e(x)-
 v(t,x)| 
= O(\varepsilon_k^\alpha) $.  This ends the proof of    \eqref{eq.cla1}.  
\end{proof}

\subsubsection{Proof of Theorem \ref{thm:1} using the approximation method}
We are in position to end the proof of Theorem \ref{thm:1}. 
 Recall the definition of $\Omega_{\bm \varepsilon}$  in   \eqref{eq.Oe}. 
  Using Proposition~\ref{prop:phi} together with Lemma \ref{le.Pes}, we obtain as $|\bm \varepsilon|\to 0$:
$$
    c_{\bm \varepsilon} = \frac{|\Omega|}{\pi\sum_{k=1}^NK_k^{\varepsilon_k}}\Big[1 - \frac{\sum_{{j,k=1, \, j\neq k}}^N K_k^{\varepsilon_k} K_j^{\varepsilon_j} f_j^0(x^{(k)})}{\sum_{k=1}^NK_k^{\varepsilon_k}} + O(|\mathbf K^{\bm \varepsilon}|^2)\Big]
    $$
    and
    $$
    \phi^{\bm \varepsilon} = c_{\bm \varepsilon}\Big[1+\sum_{k=1}^{N}K_k^{\varepsilon_k}f_k^0\Big(1+\sum_{{j=1,j\neq k}}^NK_j^{\varepsilon_j}f_j^0(x^{(k)})\Big)\Big] + O_{L^\infty(\Omega_{\bm \varepsilon})}(|\mathbf K^{\bm \varepsilon}|^\infty).
$$
Hence, when $|\bm{\varepsilon}|\to 0$, it holds that:
\bes
\begin{split}
    \phi^{\bm \varepsilon} &= \frac{|\Omega|}{\pi\sum_{k=1}^NK_k^{\varepsilon_k}}\Big[1 - \frac{\sum_{{j,k=1, \, j\neq k}}^N K_k^{\varepsilon_k} K_j^{\varepsilon_j} f_j^0(x^{(k)})}{\sum_{k=1}^NK_k^{\varepsilon_k}} + O(|\mathbf K^{\bm \varepsilon}|^2)\Big]\\
    &\quad \times  \left ( \Big[1+\sum_{k=1}^{N}K_k^{\varepsilon_k}f_k^0\Big(1+\sum_{{j=1,j\neq k}}^NK_j^{\varepsilon_j}f_j^0(x^{(k)})\Big)\Big] + O_{L^\infty(\Omega_{\bm \varepsilon})}(|\mathbf K^{\bm \varepsilon}|^\infty) \right ).
\end{split}
\ees
Recall that $ f_k^0(x) = \ln|x-x^{(k)}|+R_k^0(x) $, see \eqref{eq:fk0}. 
Moreover, we have   $ K_k^{\varepsilon_k}\le |x - x^{(k)}|\le C$ for all $x\in \overline{\Omega}_\varepsilon$. Hence $|\ln |x - x^{(k)}||\le \max (|\ln K_k^{\varepsilon_k}|, |\ln C|)\le |\ln K_k^{\varepsilon_k}| +  |\ln C|$. Thus, for any $\delta > 0$,    $\ln|x - x^{(k)}| = O(|\mathbf K^{\bm \varepsilon}|^{-\delta})$ uniformly on $\overline{\Omega}_\varepsilon$. Consequently,  for any $\delta > 0$, 
  \be\label{eq.app}
 f_k^0(x) = O(|\mathbf K^{\bm \varepsilon}|^{-\delta}) \text{ uniformly in } \overline{\Omega}_\varepsilon.
  \ee 
We therefore deduce  that in the limit $|\bm{\varepsilon}|\to 0$:
\bes
\begin{split}
    \phi^{\bm \varepsilon} &= \frac{|\Omega|}{\pi\sum_{k=1}^NK_k^{\varepsilon_k}}\Big[1 - \frac{\sum_{j\neq k}K_k^{\varepsilon_k} K_j^{\varepsilon_j} f_j^0(x^{(k)})}{\sum_{k=1}^NK_k^{\varepsilon_k}}\Big]\Big[1+\sum_{k=1}^{N}K_k^{\varepsilon_k}f_k^0\Big(1+\sum_{j\neq k}K_j^{\varepsilon_j}f_j^0(x^{(k)})\Big)\Big] +  {O_{L^\infty(\Omega_{\bm \varepsilon})}(|\mathbf K^{\bm \varepsilon}|)}\\
    &=
  \frac{|\Omega|}{\pi\sum_{k=1}^NK_k^{\varepsilon_k}}\Big[1 - \frac{\sum_{j\neq k}K_k^{\varepsilon_k} K_j^{\varepsilon_j} f_j^0(x^{(k)})}{\sum_{k=1}^NK_k^{\varepsilon_k}} + \sum_{k=1}^{N}K_k^{\varepsilon_k}f_k^0\Big] + O_{L^\infty(\Omega_{\bm \varepsilon})}(|\mathbf K^{\bm \varepsilon}|^{1-\delta}),
  \end{split}
\ees
for any $\delta > 0$.  Together with Corollary \ref{cor:phie}, this implies  \eqref{eq:thmE}. The proof of Theorem \ref{thm:1} is complete.

\subsubsection{Proof of Theorem \ref{thm:2} using the approximation method}
Let $i\in\lb1,N\rb$. For the exit location, we first notice that for any $x\in\overline\Omega_{\bm \varepsilon}$,
\bes
\psi^{\bm \varepsilon}_i(x) = \Big[ (c_{\bm \varepsilon}^i)^{-1}\phi^{\bm \varepsilon}_i(x) - \frac{(c_{\bm \varepsilon}^i)^{-1}}{c_{\bm \varepsilon}^{-1}}c_{\bm \varepsilon}^{-1}\phi^{\bm \varepsilon}(x)\Big]\, \frac{1}{1 + \sum_{k=1,k\neq i}^NK_k^{\varepsilon_k}f_k^0(x^{(i)})}.
\ees
For ease of notation, we set for $i,k\in\lb1,N\rb$:
\bes
F_k^{\bm \varepsilon} = \sum_{j=1,j\neq k}^NK_j^{\varepsilon_j}f_j^0(x^{(k)})\text{ and } F_{i,k}^{\bm \varepsilon} = \sum_{j=1, j\notin\{i,k\}}^NK_j^{\varepsilon_j}f_j^0(x^{(k)}).
\ees
With this notation, it follows that:
\bes
c_{\bm \varepsilon}^{-1} = \frac{\pi}{|\Omega|}\sum_{k=1}^NK_k^{\varepsilon_k}\big (1 + F_k^{\bm \varepsilon}\big )\ \text{ and }\ (c_{\bm \varepsilon}^i)^{-1} = \frac{\pi}{|\Omega|}\sum_{k=1,k\neq i}^NK_k^{\varepsilon_k}\big (1 + F_{i,k}^{\bm \varepsilon}\big ).
\ees
Note that $F_k^{\bm \varepsilon} = O(|\mathbf K^{\bm \varepsilon}|)$ and $F_{i,k}^{\bm \varepsilon} = O(|\mathbf K^{\bm \varepsilon}|)$. Hence, when $|\bm{\varepsilon}|\to 0$, it holds that:
\bes
\begin{split}
    \frac{(c_{\bm \varepsilon}^i)^{-1}}{c_{\bm \varepsilon}^{-1}} &= \frac{\sum_{k=1,k\neq i}^NK_k^{\varepsilon_k}(1 + F_{i,k}^{\bm \varepsilon})}{\sum_{k=1}^NK_k^{\varepsilon_k}(1 + F_k^{\bm \varepsilon})}\\
    &= \frac{\sum_{k=1,k\neq i}^NK_k^{\varepsilon_k}}{\sum_{k=1}^NK_k^{\varepsilon_k}}\Big[1 + \frac{\sum_{k=1,k\neq i}^NK_k^{\varepsilon_k} F_{i,k}^{\bm \varepsilon}}{\sum_{k=1,k\neq i}^NK_k^{\varepsilon_k}}\Big]\Big[1 + \frac{\sum_{k=1}^NK_k^{\varepsilon_k} F_k^{\bm \varepsilon}}{\sum_{k=1}^NK_k^{\varepsilon_k}}\Big]^{-1}\\
    &= \underbrace{\frac{\sum_{k=1,k\neq i}^NK_k^{\varepsilon_k}}{\sum_{k=1}^NK_k^{\varepsilon_k}}}_{A_i^{\bm \varepsilon}} \Big [1 + \underbrace{ \frac{\sum_{k=1,k\neq i}^NK_k^{\varepsilon_k} F_{i,k}^{\bm \varepsilon}}{\sum_{k=1,k\neq i}^NK_k^{\varepsilon_k}} }_{B_i^{\bm \varepsilon}}- \underbrace{\frac{\sum_{k=1}^NK_k^{\varepsilon_k} F_k^{\bm \varepsilon}}{\sum_{k=1}^NK_k^{\varepsilon_k}}}_{ C^{\bm \varepsilon}}\Big] + O(|\mathbf K^{\bm \varepsilon}|^2).
\end{split}
\ees
 Regarding the asymptotic behavior as $|\bm \varepsilon| \to 0$, we note that the precision for $A_i^{\bm \varepsilon}$ is limited to order $O(1)$. In contrast, $B_i^{\bm \varepsilon}$ and $C^{\bm \varepsilon}$  are  $O(|\mathbf K^{\bm \varepsilon}|)$. 
  We also set for  $x\in\overline\Omega_{\bm \varepsilon}$ (see \eqref{eq:fk0}):
\bes
J^{\bm \varepsilon}(x) = \sum_{j=1}^NK_j^{\varepsilon_j}f_j^0(x)\text{ and } J_i^{\bm \varepsilon}(x) = \sum_{j=1,j\neq i}^NK_j^{\varepsilon_j}f_j^0(x).
\ees
Note that by \eqref{eq.app}, we have for any $\delta>0$:
 \be\label{eq.app2}
|J_i^{\bm \varepsilon}(x)|+  |J^{\bm \varepsilon}(x)|= O(|\mathbf K^{\bm \varepsilon}|^{1-\delta}) \text{ uniformly in } \overline{\Omega}_{\bm \varepsilon}.
  \ee 
Using  Lemma \ref{le.Pes}, Proposition \ref{prop:phi}, and \eqref{eq.app},  we have for all $x\in\overline\Omega_{\bm \varepsilon}$:
\bes
\begin{split}
    c_{\bm \varepsilon}^{-1}\phi^{\bm \varepsilon}(x) &= 1 + \sum_{k=1}^NK_k^{\varepsilon_k}f_k^0(x)(1+F_k^{\bm \varepsilon}) + O_{L^\infty(\Omega_{\bm \varepsilon})}(|\mathbf K^{\bm \varepsilon}|^\infty)\\
    &= 1 + \sum_{k=1}^NK_k^{\varepsilon_k}f_k^0(x) + O_{L^\infty(\Omega_{\bm \varepsilon})}(|\mathbf K^{\bm \varepsilon}|^{2-\delta}) = 1 + J^{\bm \varepsilon}(x) + O_{L^\infty(\Omega_{\bm \varepsilon})}(|\mathbf  K^{\bm \varepsilon}|^{2-\delta}).
\end{split}
\ees
For the same raisons, we also deduce that for all $i\in\lb1,N\rb$ and 
$x\in\overline\Omega_{\bm \varepsilon}$, it holds that:
\bes
\begin{split}
    (c_{\bm \varepsilon}^i)^{-1}\phi^{\bm \varepsilon}_i(x)   
    &= 1+J^{\bm \varepsilon}_i(x) + O_{L^\infty(\Omega_{\bm \varepsilon})}(|\mathbf K^{\bm \varepsilon}|^{2-\delta}).
\end{split}
\ees
Combining all the previous estimates and using \eqref{eq.app2},  we obtain that for $|\bm{\varepsilon}|$ small enough,  for all $x\in\overline\Omega_{\bm \varepsilon}$: 
\bes
\begin{split}
 [1 + F_i^{\bm \varepsilon} ]  \psi_i^{\bm \varepsilon}(x)&= 1+J^{\bm \varepsilon}_i(x) + O_{L^\infty(\Omega_{\bm \varepsilon})}(|\mathbf K^{\bm \varepsilon}|^{2-\delta}) - \Big[1 + J^{\bm \varepsilon}(x) + O_{L^\infty(\Omega_{\bm \varepsilon})}(|\mathbf  K^{\bm \varepsilon}|^{2-\delta}) \Big]   \\
  &\quad \times \left [  A_i^{\bm \varepsilon}\left [1 +   B_i^{\bm \varepsilon}  -  C^{\bm \varepsilon} \right] + O(|\mathbf K^{\bm \varepsilon}|^2) \right]\\
  &=  1+J^{\bm \varepsilon}_i(x)   -  A_i^{\bm \varepsilon}[1+ B_i^{\bm \varepsilon}  -  C^{\bm \varepsilon} ]- A_i^{\bm \varepsilon} J^{\bm \varepsilon}(x)  +  O_{L^\infty(\Omega_{\bm \varepsilon})}(|\mathbf K^{\bm \varepsilon}|^{2-\delta})  . 
\end{split}
\ees
Hence, using again \eqref{eq.app2},  for every $x\in\overline\Omega_{\bm \varepsilon}$, we have:
\bes
\begin{split}
    \psi_i^{\bm \varepsilon}(x) &=   \Big(  1+J^{\bm \varepsilon}_i(x)   -  A_i^{\bm \varepsilon}[1+ B_i^{\bm \varepsilon}  -  C^{\bm \varepsilon} ]- A_i^{\bm \varepsilon} J^{\bm \varepsilon}(x)    \Big)\\
    &\quad  \times\big [ 1 - F_i^{\bm \varepsilon} + O(|\mathbf K^{\bm \varepsilon}|^2 )\big] + O_{L^\infty(\Omega_{\bm \varepsilon})}(|\mathbf K^{\bm \varepsilon}|^{2-\delta})\\ 
    &= 1- A_i^{\bm \varepsilon} -  F_i^{\bm \varepsilon} + J_i^{\bm \varepsilon}(x)  +  F_i^{\bm \varepsilon} A_i^{\bm \varepsilon}  -  A_i^{\bm \varepsilon}  J^{\bm \varepsilon}(x)-   A_i^{\bm \varepsilon}  [B_i^{\bm \varepsilon}  -  C^{\bm \varepsilon} ] + O_{L^\infty(\Omega_{\bm \varepsilon})}(|\mathbf K^{\bm \varepsilon}|^{2-\delta})
       \end{split}
\ees
Note that $1-  A_i^{\bm \varepsilon}=K_i^{\varepsilon_i}/\sum_{k=1}^NK_k^{\varepsilon_k}$. Thus, it holds that:
\be\label{eq:pieceThomas1}
\psi_i(x) = \frac{K_i^{\varepsilon_i}}{\sum_{k=1}^NK_k^{\varepsilon_k}} +\mathscr  P _i(\mathbf K^{\bm \varepsilon})(x) + \mathscr Q_i(\mathbf K^{\bm \varepsilon}) + O_{L^\infty(\Omega_{\bm \varepsilon})}(|\mathbf K^{\bm \varepsilon}|^{2-\delta})
\ee
where,  
$$
\forall x\in\overline\Omega_{\bm \varepsilon}, \ \mathscr  P_i(\mathbf K^{\bm \varepsilon})(x) = J^{\bm \varepsilon}_i(x) - A_i^{\bm \varepsilon} J^{\bm \varepsilon}(x) \, \,\,\text{ and } \,\,\,
 \mathscr Q_i(\mathbf K^{\bm \varepsilon}) = -   A_i^{\bm \varepsilon}  [B_i^{\bm \varepsilon}  -  C^{\bm \varepsilon} ]    - \frac{K_i^{\varepsilon_i}}{\sum_{k=1}^NK_k^{\varepsilon_k}}\, F_i^{\bm \varepsilon}.
$$
Using that $J^{\bm \varepsilon}(x) = J^{\bm \varepsilon}_i(x) + K_i^{\varepsilon_i} f_i^0(x)$, we have by definition of $A_i^{\bm \varepsilon}$ and $J^{\bm \varepsilon}_i$:
\be\label{eq:pieceThomas2}
\begin{split}
 \mathscr P_i(\mathbf K^{\bm \varepsilon})(x) &= \frac{K_i^{\varepsilon_i}}{\sum_{k=1}^NK_k^{\varepsilon_k}}J^{\bm \varepsilon}_i(x) - \frac{\sum_{k=1,k\neq i}^NK_k^{\varepsilon_k}}{\sum_{k=1}^NK_k^{\varepsilon_k}}\, K_i^{\varepsilon_i} f_i^0(x)\\
    &= \frac{K_i^{\varepsilon_i}}{\sum_{k=1}^NK_k^{\varepsilon_k}}\sum_{k=1,k\neq i}^NK_k^{\varepsilon_k}[f_k^0(x) - f_i^0(x)]\\
    &= \frac{K_i^{\varepsilon_i}}{\sum_{k=1}^NK_k^{\varepsilon_k}}\sum_{k=1,k\neq i}^NK_k^{\varepsilon_k}\Big [R_k^0(x) - R_i^0(x) + \ln \frac{|x-x^{(k)}|}{|x-x^{(i)}|}\Big ]\\
    &=\frac{K_i^{\varepsilon_i}}{\big(\sum_{k=1}^NK_k^{\varepsilon_k}\big)^2}\sum_{j,k=1}^NK_j^{\varepsilon_k} K_k^{\varepsilon_k} [R_k^0(x) - R_i^0(x)]   
+ \frac{K_i^{\varepsilon_i}}{\sum_{k=1}^NK_k^{\varepsilon_k}}\sum_{k=1,k\neq i}^NK_k^{\varepsilon_k}  \ln \frac{|x-x^{(k)}|}{|x-x^{(i)}|} 
\end{split}
\ee
We now expand $\mathscr Q_i(\mathbf K^{\bm \varepsilon})$ to recover the asymptotic expression for $\mathbf p_i^{\bm \varepsilon}$ provided in Theorem \ref{thm:2}. First, we note that:
\bes
\begin{split}
  \Big[\sum_{k=1}^NK_k^{\varepsilon_k} \Big]A_i^{\bm \varepsilon}  [B_i^{\bm \varepsilon}  -  C^{\bm \varepsilon} ] 
   & =\sum_{\ell=1,\ell\neq i}^NK_\ell^{\varepsilon_\ell}\, \Big[\, \frac{\sum_{k=1,k\neq i}^NK_k^{\varepsilon_k} F_{i,k}^{\bm \varepsilon}}{\sum_{k=1,k\neq i}^NK_k^{\varepsilon_k}} - \frac{\sum_{k=1}^NK_k^{\varepsilon_k} F_k^{\bm \varepsilon}}{\sum_{k=1}^NK_k^{\varepsilon_k}}\,  \Big] \\
   & = \sum_{k=1,k\neq i}^NK_k^{\varepsilon_k} F_{i,k}^{\bm \varepsilon}  -  \Big[\sum_{\ell=1,\ell\neq i}^NK_\ell^{\varepsilon_\ell}  \Big]\frac{\sum_{k=1}^NK_k^{\varepsilon_k} F_k^{\bm \varepsilon}}{\sum_{k=1}^NK_k^{\varepsilon_k}} \\
    &= \frac{1}{\sum_{k=1}^NK_k^{\varepsilon_k}}\Big[\sum_{j=1}^N\sum_{k=1,k\neq i}^NK_j^{\varepsilon_j} K_k^{\varepsilon_k} F_{i,k}^{\bm \varepsilon} - \sum_{k=1}^N\sum_{j=1,j\neq i}^NK_j^{\varepsilon_j} K_k^{\varepsilon_k} F_k^{\bm \varepsilon}\Big].
\end{split}
\ees
Note that because for  $k\neq i$, $F_{i,k}^{\bm \varepsilon} - F_k^{\bm \varepsilon} = -K_i^{\varepsilon_i} f_i^0(x^{(k)})$, it holds that  
\bes
\sum_{k=1,k\neq i}^NK_k^{\varepsilon_k}(F_{i,k}^{\bm \varepsilon} - F_k^{\bm \varepsilon}) = -\sum_{k=1,k\neq i}^NK_k^{\varepsilon_k} K_i^{\varepsilon_i} f_i^0(x^{(k)}).
\ees
Writing  $F_{i,k}^{\bm\varepsilon}= F_{i,k}^{\bm\varepsilon}
-
F_k^{\bm\varepsilon}+ F_k^{\bm\varepsilon}$, we therefore deduce that:
\bes
\begin{split}
&\sum_{j=1}^{N}\sum_{k=1,k\neq i}^{N}
K_j^{\varepsilon_j}K_k^{\varepsilon_k}
F_{i,k}^{\bm\varepsilon}
-
\sum_{k=1}^{N}\sum_{j=1,j\neq i}^{N}
K_j^{\varepsilon_j}K_k^{\varepsilon_k}
F_k^{\bm\varepsilon}
\\
&=
\sum_{j=1}^{N}\sum_{k=1,k\neq i}^{N}
K_j^{\varepsilon_j}K_k^{\varepsilon_k}
\big(
F_{i,k}^{\bm\varepsilon}
-
F_k^{\bm\varepsilon}
\big)
\\
&\quad
+
\sum_{j=1}^{N}\sum_{k=1,k\neq i}^{N}
K_j^{\varepsilon_j}K_k^{\varepsilon_k}
F_k^{\bm\varepsilon}
-
\sum_{k=1}^{N}\sum_{j=1,j\neq i}^{N}
K_j^{\varepsilon_j}K_k^{\varepsilon_k}
F_k^{\bm\varepsilon}
\\
&=
-\sum_{j=1}^{N}\sum_{k=1,k\neq i}^{N}
K_j^{\varepsilon_j}
K_k^{\varepsilon_k}
K_i^{\varepsilon_i}
f_i^0(x^{(k)}) 
-\sum_{j=1}^{N}
K_j^{\varepsilon_j}
K_i^{\varepsilon_i}
F_i^{\bm\varepsilon}
+
\sum_{k=1}^{N}
K_i^{\varepsilon_i}
K_k^{\varepsilon_k}
F_k^{\bm\varepsilon}.
\end{split}
\ees
Consequently,  factorizing by $K_i^{\varepsilon_i}$, we  thus get that the term $\Big[\sum_{k=1}^{N}K_k^{\varepsilon_k}\Big]
A_i^{\bm\varepsilon}
\big(B_i^{\bm\varepsilon}-C^{\bm\varepsilon}\big)
$ is equal to:
\bes
\begin{split}
&
\frac{K_i^{\varepsilon_i}}
{\sum_{k=1}^{N}K_k^{\varepsilon_k}}
\Bigg[
-
\sum_{j=1}^{N}
\sum_{k=1,k\neq i}^{N}
K_j^{\varepsilon_j}
K_k^{\varepsilon_k}
f_i^0(x^{(k)}) 
-
\sum_{j=1}^{N}
K_j^{\varepsilon_j}
F_i^{\bm\varepsilon}
+
\sum_{k=1}^{N}
K_k^{\varepsilon_k}
F_k^{\bm\varepsilon}
\Bigg]
\\
&=
\frac{K_i^{\varepsilon_i}}
{\sum_{k=1}^{N}K_k^{\varepsilon_k}}
\Bigg[
-
\Big(\sum_{j=1}^{N}K_j^{\varepsilon_j}\Big)
\Big(
\sum_{k=1,k\neq i}^{N}
K_k^{\varepsilon_k}
f_i^0(x^{(k)})
\Big) 
-
F_i^{\bm\varepsilon}
\Big(
\sum_{j=1}^{N}
K_j^{\varepsilon_j}
\Big)
+
\sum_{k=1}^{N}
K_k^{\varepsilon_k}
F_k^{\bm\varepsilon}
\Bigg]
\\
&=
-K_i^{\varepsilon_i}
\sum_{k=1,k\neq i}^{N}
K_k^{\varepsilon_k}
f_i^0(x^{(k)})
-
K_i^{\varepsilon_i}
F_i^{\bm\varepsilon} + 
\frac{K_i^{\varepsilon_i}}
{\sum_{k=1}^{N}K_k^{\varepsilon_k}}
\sum_{k=1}^{N}
K_k^{\varepsilon_k}
F_k^{\bm\varepsilon}.
\end{split}
\ees
Hence:
\[
\begin{aligned}
A_i^{\bm\varepsilon}
\big(
B_i^{\bm\varepsilon}
-
C^{\bm\varepsilon}
\big)
&=
-
\frac{K_i^{\varepsilon_i}}
{\sum_{k=1}^{N}K_k^{\varepsilon_k}}
\sum_{k=1,k\neq i}^{N}
K_k^{\varepsilon_k}
f_i^0(x^{(k)}) 
-
\frac{K_i^{\varepsilon_i}}
{\sum_{k=1}^{N}K_k^{\varepsilon_k}}
F_i^{\bm\varepsilon}
+
\frac{K_i^{\varepsilon_i}}
{\left(
\sum_{k=1}^{N}
K_k^{\varepsilon_k}
\right)^2}
\sum_{k=1}^{N}
K_k^{\varepsilon_k}
F_k^{\bm\varepsilon}.
\end{aligned}
\]
At this step, we then deduce that:
\be\label{eq:pieceThomas3}
\begin{split}
    \mathscr Q_i(\mathbf K^{\bm \varepsilon}) &= \textcolor{blue}{-} \frac{K_i^{\varepsilon_i}}{(\sum_{k=1}^NK_k^{\varepsilon_k})^2}\Big(\sum_{k=1}^NK_k^{\varepsilon_k} F_k^{\bm \varepsilon} - \sum_{j=1}^N\sum_{k=1,k\neq i}^NK_j^{\varepsilon_j} K_k^{\varepsilon_k} f_i^0(x^{(k)})\Big).
\end{split}
\ee
Recalling that for all $k\in\lb1,N\rb$ and $x\in\overline\Omega\setminus\{x^{(k)}\}$, $f_k^0(x) = \ln|x - x^{(k)}| + R_k^0(x)$ and $R_k^0(x^{(k)}) = 0$, we have
\be\label{eq:pieceThomas4}
\begin{split}
    &\sum_{k=1}^NK_k^{\varepsilon_k} F_k^{\bm \varepsilon} - \sum_{j=1}^N\sum_{k=1,k\neq i}^NK_j^{\varepsilon_j} K_k^{\varepsilon_k} f_i^0(x^{(k)})\\
    &\quad= \sum_{k=1}^N\sum_{j=1,j\neq k}^NK_k^{\varepsilon_k} K_j^{\varepsilon_j} f_j^0(x^{(k)}) - \sum_{j=1}^N\sum_{k=1,k\neq i}^NK_j^{\varepsilon_j} K_k^{\varepsilon_k} f_i^0(x^{(k)})\\
    &\quad= \sum_{k=1}^N\sum_{j=1,j\neq k}^NK_k^{\varepsilon_k} K_j^{\varepsilon_j}\ln|x^{(j)}-x^{(k)}| - \sum_{j=1}^N\sum_{k=1,k\neq i}^NK_j^{\varepsilon_j} K_k^{\varepsilon_k}\ln|x^{(i)}-x^{(k)}|\\
    &\quad+ \sum_{j=1}^N\sum_{k=1}^NK_k^{\varepsilon_k} K_j^{\varepsilon_j}[R_j^0(x^{(k)}) - R_i^0(x^{(k)})].
\end{split}
\ee
Observe now that
\be\label{eq:pieceThomas5}
\begin{split}
    &\sum_{k=1}^N\sum_{j=1,j\neq k}^NK_k^{\varepsilon_k} K_j^{\varepsilon_j}\ln|x^{(j)}-x^{(k)}| - \sum_{j=1}^N\sum_{k=1,k\neq i}^NK_j^{\varepsilon_j} K_k^{\varepsilon_k}\ln|x^{(i)}-x^{(k)}|\\
    &\quad= \sum_{k=1}^N\sum_{j=1,j\neq k}^NK_k^{\varepsilon_k} K_j^{\varepsilon_j}\ln|x^{(j)}-x^{(k)}| - \sum_{k=1}^N\sum_{j=1,j\neq i}^NK_j^{\varepsilon_j} K_k^{\varepsilon_k}\ln|x^{(i)}-x^{(j)}|\\
    &\quad= \sum_{k=1}^N\sum_{j=1,j\notin \{i,k\}}^NK_k^{\varepsilon_k} K_j^{\varepsilon_j}\ln|x^{(j)}-x^{(k)}| + \sum_{k=1}^NK_k^{\varepsilon_k} K_i^{\varepsilon_i}\ln|x^{(i)}-x^{(k)}|\\
    &\quad- \sum_{k=1}^N\sum_{j=1,j\notin \{i,k\}}^NK_j^{\varepsilon_j} K_k^{\varepsilon_k}\ln|x^{(i)}-x^{(j)}| - \sum_{k=1}^N (K_k^{\varepsilon_k})^2\ln|x^{(i)}-x^{(k)}|\\
    &\quad= \sum_{k=1}^N\sum_{j=1,j\notin \{i,k\}}^NK_k^{\varepsilon_k} K_j^{\varepsilon_j}\ln\frac{|x^{(j)}-x^{(k)}|}{|x^{(i)}-x^{(j)}|} +  \sum_{k=1}^N K_k^{\varepsilon_k}(K_i^{\varepsilon_i}-K_k^{\varepsilon_k})\ln|x^{(i)}-x^{(k)}|.
\end{split}
\ee
In conclusion, using \eqref{eq:pieceThomas1}, \eqref{eq:pieceThomas2}, \eqref{eq:pieceThomas3}, \eqref{eq:pieceThomas4}, and \eqref{eq:pieceThomas5}, it holds for all $x\in\overline\Omega_{\bm \varepsilon}$:
\bes
\begin{split}
 \psi_i(x) &=\frac{K_i^{\varepsilon_i}}{\sum_{k=1}^NK_k^{\varepsilon_k}} + \frac{K_i^{\varepsilon_i}}{\big(\sum_{k=1}^NK_k^{\varepsilon_k}\big)^2}\sum_{j,k=1}^NK_k^{\varepsilon_k} K_j^{\varepsilon_k} [R_j^0(x) - R_i^0(x)] \\
    &\quad 
+ \frac{K_i^{\varepsilon_i}}{\sum_{k=1}^NK_k^{\varepsilon_k}}\sum_{k=1,k\neq i}^NK_k^{\varepsilon_k}  \ln \frac{|x-x^{(k)}|}{|x-x^{(i)}|}\\
&\quad - \frac{K_i^{\varepsilon_i}}{(\sum_{k=1}^NK_k^{\varepsilon_k})^2}\Bigg(   \sum_{j,k=1}^N K_k^{\varepsilon_k} K_j^{\varepsilon_j}[R_j^0(x^{(k)}) - R_i^0(x^{(k)})] \\
&\quad + \sum_{j=1}^N\sum_{k\notin\{i,j\}}K_k^{\varepsilon_k} K_j^{\varepsilon_j}\ln\frac{|x^{(j)} - x^{(k)}|}{|x^{(i)} - x^{(j)}|} +  \sum_{k=1}^N K_k^{\varepsilon_k}(K_i^{\varepsilon_i}-K_k^{\varepsilon_k})\ln|x^{(i)}-x^{(k)}|\Bigg).
\end{split}
\ees
Note that
$$- \sum_{j=1}^N\sum_{k\notin\{i,j\}}K_k^{\varepsilon_k} K_j^{\varepsilon_j}\ln\frac{|x^{(j)} - x^{(k)}|}{|x^{(i)} - x^{(j)}|} = \sum_{k=1}^N \sum_{j \notin \{i, k\}} K_k^{\varepsilon_k} K_j^{\varepsilon_j} \ln \frac{|x^{(i)} - x^{(k)}|}{|x^{(j)} - x^{(k)}|}.$$
Using Proposition \ref{prop:psi}, we finally  obtain \eqref{eq:thmP} and \eqref{eq:probaThomas}. This achieves the proof of Theorem \ref{thm:2}.

\section{Proofs of Theorems \ref{thm:1} and \ref{thm:2} via the Green method}\label{sec:layer}

This section establishes Theorems \ref{thm:1} and \ref{thm:2} through a different  methodology. Unlike the approximation-based analysis of the preceding section, the current method relies on the structural properties of the Neumann Green functions introduced in  Section \ref{sec.RkR}.

\subsection{Strategy of the proof}
\label{sec.star2}
The cornerstone of this method lies in viewing Equations \eqref{eq:E1} and \eqref{eq:E2} as perturbations of the Neumann problem as $|\bm \varepsilon |\to 0$. This perspective naturally leads to the introduction of the Neumann Green functions defined in \eqref{eq:Greens1}. The central idea, pioneered in \cite{HoSc04} (see also \cite{HoSc14}), involves performing an integration by parts between $\mathbf e^{\bm \varepsilon}(x)$   and the Green functions  to represent the mean escape time (more) explicitly in terms of the Green functions.  We proceed the same way for the escape location probabilities $\mathbf p_i^{\bm \varepsilon}$. 
This strategy also serves as the foundation for the analysis in \cite{AmKaKaLe12}. In our case, this identity is rigorously justified in Proposition \ref{prop:IPPhard} by relying on  refined regularity results for mixed boundary value problems \cite{OtBr13,TaOtBr13}. Exploiting the boundary conditions satisfied by $\mathbf e^{\bm \varepsilon}$ and $\mathbf p_i^{\bm \varepsilon}$, these integration by parts generate a system of linear equations involving the singular boundary operator $\mathscr L_p$ introduced in Section \ref{sub:key} and traces of the normal derivatives of $\mathbf e^{\bm \varepsilon}$ and $\mathbf p_i^{\bm \varepsilon}$ over $\partial \Omega$. The proof then hinges on inverting this operator. Crucially, this requires the integration by parts formula to remain valid not only in the interior of $\Omega$ but also up to its boundary (in order to exploit the boundary conditions), as detailed in Proposition \ref{prop:IPPhard}. For that reason, one should therefore care about the existence and regularity of the traces of  $\mathbf e^{\bm \varepsilon}$ and $ \mathbf p_i^{\bm \varepsilon}$ over $\partial \Omega$. 

%


\subsection{Integration by parts formulas for smooth functions} 
\label{sec.IPPs}

We start with the following  lemma which shows that  the Green functions are symmetric in their arguments.
\begin{lemma}\label{le.sym}
   For all $x,y\in\Omega$, $x\neq y$, and all $z,\zeta\in\partial\Omega$, $z\neq\zeta$,
    \bes
    G_\Omega(x,y) = G_\Omega(y,x),\ \ \ G_{\partial\Omega}(z,\zeta) = G_{\partial\Omega}(\zeta,z).
    \ees
\end{lemma}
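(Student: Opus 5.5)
The plan is the classical Green second identity on a domain with the singularities cut out by small disks, passing to the limit as the radius tends to zero. I write the details for $G_\Omega$ and then explain the modifications for $G_{\partial\Omega}$.

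\textbf{Symmetry of $G_\Omega$.} Fix $x\neq y$ in $\Omega$ and $r>0$ small. Set $D_r=\Omega\setminus(\overline B(x,r)\cup\overline B(y,r))$, and let $u=G_\Omega(\cdot,x)$, $v=G_\Omega(\cdot,y)$. By \eqref{eq:Greens2} and the smoothness of $R_\Omega(\cdot,x)$, $R_\Omega(\cdot,y)$ on $\overline\Omega$, both functions are smooth on $\overline{D_r}$. Using $\Delta\ln|\cdot-x|=0$ away from $x$ and $\Delta_x R_\Omega=0$, they are harmonic there.

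Green's second identity gives
\[
0=\int_{\partial D_r}(u\,\partial_n v-v\,\partial_n u)\,d\sigma .
\]
The boundary of $D_r$ has three pieces, which I treat in turn.

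\emph{The outer boundary $\partial\Omega$.} Differentiating \eqref{eq:Greens2} and using \eqref{eq:RGreens}, we get $\partial_n u=\partial_n v=1/|\partial\Omega|$ there. Indeed, the normal derivative of $\frac1{2\pi}\ln|\zeta-y|$ cancels the second term of $w_\Omega$. Hence the contribution of $\partial\Omega$ is
\[
\frac1{|\partial\Omega|}\int_{\partial\Omega}(u-v)\,d\sigma .
\]
This vanishes, because the normalization in \eqref{eq:RGreens} forces $\int_{\partial\Omega}G_\Omega(\zeta,y)\,d\sigma(\zeta)=0$.

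\emph{The small circle $\partial B(x,r)$.} Write $u=\frac1{2\pi}\ln r+O(1)$ and $\partial_n u=-\frac{1}{2\pi r}+O(1)$ with respect to the normal pointing into the disk. Since $v$ is smooth near $x$, the term $\int u\,\partial_n v$ is $O(r|\ln r|)\to0$. The term $-\int v\,\partial_n u$ tends to $v(x)$ (with the orientation of $D_r$).

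\emph{The small circle $\partial B(y,r)$.} Symmetrically, the contribution tends to $-u(y)$.

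Letting $r\to0$ yields $G_\Omega(x,y)=G_\Omega(y,x)$.

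\textbf{Symmetry of $G_{\partial\Omega}$.} Fix $z\neq\zeta$ on $\partial\Omega$ and remove the half-disks $\Omega\cap \overline B(z,r)$ and $\Omega\cap\overline B(\zeta,r)$. Now $u=G_{\partial\Omega}(\cdot,z)$ and $v=G_{\partial\Omega}(\cdot,\zeta)$ are smooth on $\overline{D_r}$. On $\partial\Omega\setminus(B(z,r)\cup B(\zeta,r))$ their normal derivatives equal $1/|\partial\Omega|$, using \eqref{eq.RpaOmega} and \eqref{eq.Nde}. The outer contribution therefore is
\[
\frac1{|\partial\Omega|}\int_{\partial\Omega\setminus(B(z,r)\cup B(\zeta,r))}(u-v)\,d\sigma ,
\]
and this tends to $\frac1{|\partial\Omega|}\int_{\partial\Omega}(u-v)=0$. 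The limit holds because the logarithmic singularity is integrable, and the integral vanishes by the normalization in \eqref{eq.RpaOmega}.

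On the arc $\Omega\cap\partial B(z,r)$, which has length $\pi r+O(r^2)$ by smoothness of $\partial\Omega$, we have $\partial_n u=-\frac{1}{\pi r}+O(1)$. So $-\int v\,\partial_n u\to v(z)$, while $\int u\,\partial_n v=O(r|\ln r|)$. This is precisely why the prefactor $\frac1\pi$ was chosen in \eqref{eq:Greens2}. The arc around $\zeta$ gives $-u(\zeta)$ in the same way, which proves $G_{\partial\Omega}(z,\zeta)=G_{\partial\Omega}(\zeta,z)$.

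\textbf{Main obstacle.} The delicate point is the boundary case. One must check that the arcs $\Omega\cap\partial B(z,r)$ have length $\pi r(1+O(r))$ and that the $O(1)$ corrections, including the smooth function $R_{\partial\Omega}(\cdot,z)$, remain bounded up to the boundary near $z$. Both follow from the smoothness of $\partial\Omega$ (locally a graph with curvature bounded) and the smoothness of $R_{\partial\Omega}$ on $\overline\Omega$ stated after \eqref{eq.RpaOmega}.
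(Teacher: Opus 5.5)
Your proposal is correct and follows the same route as the paper. You apply Green's second identity on $\Omega$ with small disks removed, the outer boundary term vanishes by the constant Neumann data $1/|\partial\Omega|$ together with the zero-mean normalization, and the small-circle limits produce $v(x)-u(y)$. Your half-disk treatment of the boundary case also supplies the details the paper leaves as ``made similarly'': the arc length $\pi r(1+O(r))$ pairs with the $\frac1\pi$ prefactor, and the outer integral over $\partial\Omega$ minus the two small arcs converges.
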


Note that Lemma \ref{le.sym} and \eqref{eq:Greens2} imply that   $R_{\Omega}$ (resp. $R_{\partial\Omega}$) is symmetric over $\Omega \times \Omega$  (resp. over $\partial\Omega \times \partial\Omega$).

\begin{proof}
The proof is short and rather standard. 
Let $x, y \in \Omega$ with $x \neq y$.  We apply Green's second identity to the functions $u_x(\cdot) = G_\Omega(\cdot, x)$ and $v_y(\cdot) = G_\Omega(\cdot, y)$ over the domain $\Omega$. To avoid the singularities, we consider $\Omega_{r}(x,y) = \Omega \setminus (B(x, \delta) \cup B(y, \delta))$ and let $r\to 0$.
In the domain $\Omega_\delta$, both functions are harmonic, so Green’s second identity gives:
$$0=     \int_{\partial \Omega_{r}(x,y)} \big [u_x(\zeta) \partial_{n_{\Omega_{r}(x,y)}(\zeta)} v_y(\zeta) - v_y(\zeta) \partial_{n_{\Omega_{r}(x,y)}(\zeta)} u_x(\zeta) \big] d\sigma(\zeta). $$
The boundary $\partial \Omega_{r}(x,y)$ consists of three disjoint parts: the outer boundary $\partial \Omega$, and the two small circles $S_r(x)= \partial B(x, r)$ and $S_r(y) = \partial B(y, r)$.
By \eqref{eq:Greens1}, for all  $\zeta \in \partial \Omega$,  $\partial_{n_\Omega(\zeta)} G_\Omega(\zeta, x) =\partial_{n_\Omega(\zeta)} G_\Omega(\zeta, y) = {1}/{|\partial \Omega|}$. The integral over $\partial \Omega$ is thus using the normalization in \eqref{eq:Greens1}:
$$
\frac{1}{|\partial \Omega|} \left( \int_{\partial \Omega} G_\Omega(\zeta, x) \, d\sigma - \int_{\partial \Omega} G_\Omega(\zeta, y) \, d\sigma \right)=0.$$
Moreover,  we claim that :
$$\lim_{r\to 0} I_r(y)= -u(y), \text{ with }  I_r(y):=\int_{S_r(y)}   \big [u_x(\zeta) \partial_{n_{\Omega_{r}(x,y)}(\zeta)} v_y(\zeta) - v_y(\zeta) \partial_{n_{\Omega_{r}(x,y)}(\zeta)} u_x(\zeta) \big] d\sigma(\zeta)  .$$
To prove it we write  $v(\zeta) = \frac{1}{2\pi} \ln|\zeta - y| + R_\Omega(\zeta, y)$.  We have also  for $\zeta\in  S_r(y)$, 
$$n_{\Omega_{r}(x,y)}(\zeta) = -\frac{\zeta - y}{|\zeta - y|} \text{ and }  \partial_{n_{\Omega_{r}(x,y)}(\zeta)} \ln|\zeta - y|  = -\frac{1}{r}.$$
We thus have $I_r(y)= J_r(y)+ K_r(y) + L_r(y)$, where, since both $u_x$ and $R_\Omega(\cdot,y)$ are smooth in a neighborhood of $y$, 
$$L_r(y):=\int_{S_r(y)}   \big [u_x(\zeta) \partial_{n_{\Omega_{r}(x,y)}(\zeta)} R_\Omega(\zeta,y) - R_\Omega(\zeta,y) \partial_{n_{\Omega_{r}(x,y)}(\zeta)} u(\zeta) \big] d\sigma(\zeta)  \to 0,$$
and 
$$J_r(y):= -\frac{1}{2\pi r} \int_{S_r(y)} u_x(\zeta)   d\sigma(\zeta) \text{ and } K_r(y) := \int_{S_r(y)} \frac{\ln r}{2\pi}  \partial_{n_{\Omega_{r}(x,y)}(\zeta)} u_x(\zeta) d\sigma(\zeta).$$
Since $u_x$ is smooth at  $y$,  $J_r(y)\to u(y)$, and $|\nabla u_x|$ is bounded by some $M$ in a neighborhood of $y$. Thus, $|K_r(y)| \leq   M r |\ln r|$ and therefore  $K_r(y)\to 0$. 
Similarly, we have, $\lim_{r\to 0} I_r(x) =v_y(x)$. 
Hence $u_x(y)=v_y(x)$, the desired result. This proves that $G_{\Omega}$ is symmetric over $\Omega \times \Omega$. The proof of the symmetry of $G_{\partial\Omega}$ over $\partial\Omega \times \partial\Omega$  is made similarly.   The proof of the lemma is complete. 
\end{proof}

The Green functions are particularly advantageous because they allow $u(x)$ to be expressed via integrals of $\Delta u$ and $\partial_n u$. This integral formulation is the key pillar of the approach presented here.

\begin{lemma}\label{lem:IPPeasy}
    Let $u\in \ccc^\infty(\overline\Omega)$. Then, for all $x\in\Omega$ and all $z\in\partial\Omega$,
    \bes
    u(x) = \frac{1}{|\partial\Omega|}\int_{\partial\Omega}u(\zeta)d\sigma(\zeta) + \int_\Omega G_\Omega(y,x)\Delta u(y)dy - \int_{\partial\Omega}G_\Omega(\zeta,x)\partial_{n(\zeta)}u(\zeta)d\sigma(\zeta).
    \ees
    and
    \bes
    u(z) = \frac{1}{|\partial\Omega|}\int_{\partial\Omega}u(\zeta)d\sigma(\zeta) + \int_\Omega G_{\partial\Omega}(y,z)\Delta u(y)dy - \int_{\partial\Omega}G_{\partial\Omega}(\zeta,z)\partial_{n(\zeta)}u(\zeta)d\sigma(\zeta).
    \ees
\end{lemma}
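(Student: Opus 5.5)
We apply Green's second identity to $u$ and $v=G_\Omega(\cdot,x)$ on the punctured domain $\Omega_r(x)=\Omega\setminus\overline B(x,r)$, and then let $r\to0$. This is the same excision argument as in the proof of Lemma \ref{le.sym}. Since $v$ is harmonic on $\Omega_r(x)$ and smooth up to $\partial\Omega$ (it is a smooth function plus $\frac1{2\pi}\ln|\cdot-x|$ with $x$ interior), Green's identity gives
\[
\int_{\Omega_r(x)} v\,\Delta u\,dy=\int_{\partial\Omega}\big(v\,\partial_n u-u\,\partial_n v\big)d\sigma+\int_{S_r(x)}\big(v\,\partial_{n}u-u\,\partial_{n}v\big)d\sigma ,
\]
where on $S_r(x)$ the normal $n$ is the outward normal of $\Omega_r(x)$, pointing toward $x$. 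On $\partial\Omega$ we use $\partial_n G_\Omega(\zeta,x)=1/|\partial\Omega|$, which follows from \eqref{eq:RGreens} and the identity $\partial_n\frac1{2\pi}\ln|\zeta-x|=\frac1{|\partial\Omega|}-w_\Omega(\zeta,x)$. This produces the term $-\frac{1}{|\partial\Omega|}\int_{\partial\Omega}u\,d\sigma$.

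For the small circle we copy the computation of Lemma \ref{le.sym}. On $S_r(x)$ one has $\partial_n\ln|\zeta-x|=-1/r$, so
\[
-\int_{S_r(x)} u\,\partial_n v\,d\sigma=\frac{1}{2\pi r}\int_{S_r(x)}u\,d\sigma+O(r)\longrightarrow u(x).
\]
The term $\int_{S_r(x)} v\,\partial_n u\,d\sigma$ is $O(r|\ln r|)\to0$, because $\nabla u$ is bounded. Finally, $\int_{\Omega_r}v\Delta u\to\int_\Omega G_\Omega(y,x)\Delta u(y)\,dy$ because $\ln|\cdot-x|\in L^1$. Rearranging the terms yields the first formula.

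For $z\in\partial\Omega$ we argue in the same way with $v=G_{\partial\Omega}(\cdot,z)$ on $\Omega\setminus\overline B(z,r)$. The boundary of this set consists of $\partial\Omega\setminus\overline B(z,r)$ together with the arc $\Omega\cap\partial B(z,r)$. On the first piece, $\partial_n v=1/|\partial\Omega|$ by \eqref{eq.RpaOmega} and \eqref{eq:Greens2}, since $\partial_n\frac1\pi\ln|\zeta-z|=\frac1{|\partial\Omega|}-w_{\partial\Omega}(\zeta,z)$. As $r\to0$ the integrals over $\partial\Omega\setminus\overline B(z,r)$ converge to the full boundary integrals, because $\ln|\cdot-z|$ is integrable on $\partial\Omega$.

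On the arc, $\partial_n\frac1\pi\ln|\zeta-z|=-\frac1{\pi r}$, and the arc has length $\pi r(1+O(r))$. The argument used in proving \eqref{eq.CondC2} shows that asymptotically only a half circle lies in $\Omega$. Hence
\[
\frac1{\pi r}\int_{\Omega\cap\partial B(z,r)}u\,d\sigma\to u(z),
\]
while the remaining arc terms are $O(r|\ln r|)$ and $O(r)$. The volume integral converges since $\ln|\cdot-z|\in L^1(\Omega)$. This gives the second formula.

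The only delicate point is this boundary case. One has to control the half-circle asymptotics: the length of the arc $\Omega\cap\partial B(z,r)$ is $\pi r+O(r^2)$ by smoothness of $\partial\Omega$. One also needs the convergence of $\int_{\partial\Omega\setminus B(z,r)}G_{\partial\Omega}(\zeta,z)\partial_nu\,d\sigma$, which holds because the singularity of $G_{\partial\Omega}(\cdot,z)$ on $\partial\Omega$ is only logarithmic and hence integrable.
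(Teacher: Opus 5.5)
Your proposal is correct and follows the paper's proof. The paper also excises a small ball around the singularity and applies Green's second identity, using $\partial_n G=1/|\partial\Omega|$ on $\partial\Omega$. It then passes to the limits $\frac{1}{2\pi r}\int_{\partial B(x,r)}u\to u(x)$ and $\frac{1}{\pi r}\int_{\Omega\cap\partial B(z,r)}u\to u(z)$, together with integrability of the logarithm on $\Omega$ and on $\partial\Omega$. Your treatment of the boundary case (arc length $\pi r(1+O(r))$, convergence of the truncated boundary integrals) is somewhat more explicit than the paper's, which only says ``same arguments''.
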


The proof is standard since $u$ is smooth,  we include   it for sake of completeness.

\begin{proof}
Let  $x\in \Omega$. Recall that $\Omega_r(x)=\Omega\setminus\overline B(x,r)$ (see Section \ref{sec.RkR}), which is   simply denoted in what follows by $\Omega_r$.   For all sufficiently small $r>0$ we have $\overline B(x,r)\subset\Omega$ and therefore $\Omega\cap\partial B(x,r) = \partial B(x,r)$. Because $\Delta_y G(y,x) = 0$ in $\Omega_r$ and since  $y\mapsto G(y,x)$ is smooth over $\Omega_r$, an integration by parts yields, using also \eqref{eq:Greens2}, 
\bes 
 \begin{split}
     -\int_{\Omega_r}G_\Omega(y,x)\Delta u(y)dy  
     &= \int_{\partial\Omega}\Big[u(z)\partial_{n_{\Omega}(z)}G_\Omega(z,x) - G_\Omega(z,x)\partial_{n_{\Omega}(z)}u(z)\Big]d\sigma(z)\\
     &\quad- \int_{\partial B(x,r)}\Big [u(z)\partial_{n_{B(x,r)}(z)}G_\Omega(z,x) - G_\Omega(z,x)\partial_{n_{B(x,r)}(z)}u(z)\Big]d\sigma(z)\\
    &= \frac{1}{|\partial\Omega|}\int_{\partial\Omega}u(z)d\sigma(z) - \int_{\partial\Omega}G_\Omega(z,x)\partial_{n_{\Omega}(z)}u(z)d\sigma(z)\\
   &\quad- \frac{1}{2\pi}\int_{\partial B(x,r)}\frac{u(z)}{|x-z|}d\sigma(z) - \int_{\partial B(x,r)}u(z)\partial_{n_{B(x,r)}(z)}R_\Omega(z,x)d\sigma(z)\\
 &\quad+ \int_{\partial B(x,r)}G_\Omega(z,x)\partial_{n_{B(x,r)}(z)}u(z)d\sigma(z). 
 \end{split}
\ees
We have 
$$ \int_{\partial B(x,r)}\ln |x-z|d\sigma(z) = 2\pi r\ln r\xrightarrow[r\to 0]{} 0.$$
Therefore, since  $y\in \overline \Omega \mapsto R_\Omega(y,x)$  is smooth  (see Section \ref{sec.RkR}) and by \eqref{eq:Greens2}, it holds that:
 \bes
 \lim_{r\to0}\int_{\partial B(x,r)}G_\Omega(z,x)\partial_{n_{B(x,r)}(z)}u(z)d\sigma(z)=0.
 \ees
In addition,  one has:
$$ \lim_{r\to0}\int_{\partial B(x,r)}u(z)\partial_{n_{B(x,r)}(z)}R_\Omega(z,x)d\sigma(z)=0 \text{ and } \lim_{r\to0}\frac{1}{2\pi}\int_{\partial B(x,r)}\frac{u(z)}{|x-z|}d\sigma(z)=u(x).$$
This achieves the proof of the first formula of the lemma when $x\in \Omega$.  
 The proof of the second formula when  $z\in\partial\Omega$
is made with the same arguments and using that  
 \bes
 \lim_{r\to 0} \frac 1\pi \int_{\Omega\cap\partial B(z,r)}\frac{u(\zeta)}{|z-\zeta|}d\sigma(\zeta)= u(z) \text{ and }  \int_{\partial\Omega}|\ln|z-\zeta||d\sigma(\zeta)<+\infty.
 \ees
 The proof of Lemma \ref{lem:IPPeasy} is complete. 
\end{proof}

The following lemma will be needed to unify the two formulas given in Lemma \ref{lem:IPPeasy}. 

\begin{lemma}\label{lem:symmetry}
    For all $x\in\Omega$ and $z\in\partial\Omega$, 
    $G_\Omega(z,x) = G_{\partial\Omega}(x,z)$. 
\end{lemma}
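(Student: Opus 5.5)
The plan is a Green's second identity argument on a doubly punctured domain, the same scheme as in the proof of Lemma \ref{le.sym}. Fix $x\in\Omega$ and $z\in\partial\Omega$, and set
$$u(\cdot)=G_\Omega(\cdot,x), \qquad v(\cdot)=G_{\partial\Omega}(\cdot,z).$$
For small $r>0$ we work on $D_r=\Omega\setminus\big(\overline B(x,r)\cup\overline B(z,r)\big)$. By \eqref{eq:Greens2}, both $u$ and $v$ are smooth and harmonic on $\overline{D_r}$, because $R_\Omega(\cdot,x)$ and $R_{\partial\Omega}(\cdot,z)$ are $\mathcal C^\infty(\overline\Omega)$. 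Green's second identity therefore gives
$$0=\int_{\partial D_r}\big(u\,\partial_n v-v\,\partial_n u\big)\,d\sigma .$$
The boundary $\partial D_r$ consists of three pieces: $\partial\Omega\setminus\overline B(z,r)$, the full circle $\partial B(x,r)$, and the half-circle arc $\Omega\cap\partial B(z,r)$.

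\emph{Outer boundary.} On $\partial\Omega\setminus\overline B(z,r)$ we have $\partial_n u=\partial_n v=1/|\partial\Omega|$, either directly from \eqref{eq:RGreens}, \eqref{eq.RpaOmega} and the definitions of $w_\Omega$, $w_{\partial\Omega}$, or from \eqref{eq:Greens1}. The integrand is then $(u-v)/|\partial\Omega|$. Since $\ln|\cdot-z|$ is integrable on $\partial\Omega$, this piece tends to $\frac1{|\partial\Omega|}\big(\int_{\partial\Omega}u-\int_{\partial\Omega}v\big)$. This vanishes by the normalizations in \eqref{eq:Greens1}, which follow from the mean-value conditions in \eqref{eq:RGreens}--\eqref{eq.RpaOmega}.

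\emph{Circle around $x$.} This is exactly the computation in Lemma \ref{le.sym}: $v$ is smooth near $x$, and $u$ has the singularity $\frac1{2\pi}\ln|\cdot-x|$. The piece converges to $\pm v(x)$, with the sign fixed by the orientation.

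\emph{Arc around $z$.} Here $v=\frac1\pi\ln|\cdot-z|+R_{\partial\Omega}(\cdot,z)$ and $u$ is smooth near $z$, with $u$ continuous up to $\partial\Omega$. On the arc, $|\partial_n \ln|\cdot-z||=1/r$, and the arc length is $\pi r+o(r)$ because $\partial\Omega$ is smooth, as already used in the proof of \eqref{eq.CondC2}. The term $u\,\partial_n v$ therefore contributes $\mp\frac1\pi\cdot\frac1r\int u\,d\sigma\to\mp u(z)$. The remaining terms are bounded by $Cr|\ln r|$ or $Cr$ and vanish.

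Matching orientations as in Lemma \ref{le.sym}, the two inner contributions carry opposite signs, so in the limit $r\to0$ we get $v(x)-u(z)=0$. This is $G_{\partial\Omega}(x,z)=G_\Omega(z,x)$. The main point needing care is the arc term at the boundary point $z$. One has to check that the arc is asymptotically a half-circle of length $\pi r(1+O(r))$, and that the non-singular cross terms, such as $\ln r\int_{\text{arc}}\partial_n u$ and $\int_{\text{arc}} u\,\partial_n R_{\partial\Omega}$, are $o(1)$. Both follow from the smoothness of $\partial\Omega$, $u$ and $R_{\partial\Omega}(\cdot,z)$ near $z$.
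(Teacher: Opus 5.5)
Your argument is correct, and it takes a different route from the paper. You apply Green's second identity directly to the pair $G_\Omega(\cdot,x)$, $G_{\partial\Omega}(\cdot,z)$ on the doubly punctured domain, in the same way as Lemma \ref{le.sym}. The signs do work out: the circle around $x$ contributes $+G_{\partial\Omega}(x,z)$, the half-circle arc around $z$ contributes $-G_\Omega(z,x)$, and the outer boundary vanishes by the zero-mean normalizations. In effect, this is the second formula of Lemma \ref{lem:IPPeasy} applied to the non-smooth function $G_\Omega(\cdot,x)$, with an extra excision around its pole.

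The paper never handles the two singularities together. It applies Lemma \ref{lem:IPPeasy} to auxiliary smooth Neumann solutions $u$ and $u_f$, with $\Delta u=1$, $\partial_n u=|\Omega|/|\partial\Omega|$, and $\Delta u_f=f-m_\Omega(f)$. It then lets interior points $x_n\to x^*\in\partial\Omega$ and uses continuity of these solutions to obtain $\int_\Omega G_\Omega(y,x_n)f(y)\,dy\to\int_\Omega G_{\partial\Omega}(y,x^*)f(y)\,dy$ for smooth $f$. Finally, for $f\in\mathcal C_c^\infty(\Omega)$, it identifies the left-hand limit with $\int_\Omega G_\Omega(x^*,y)f(y)\,dy$. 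That step uses the symmetry of $R_\Omega$ from Lemma \ref{le.sym} together with uniform elliptic bounds on $R_\Omega(\cdot,y)$ for $y$ in a compact set.

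What the paper's route buys is an interpretation: the boundary representation formula appears as the limit of the interior one as the pole approaches $\partial\Omega$, which is why the two formulas merge in Proposition \ref{prop:IPPeasy}. It also reuses Lemma \ref{lem:IPPeasy} as a black box instead of redoing a singular computation.

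Your route is shorter and more elementary, and it gives the identity pointwise for each pair $(x,z)$. It skips the distributional identification, which in the paper implicitly needs continuity of $y\mapsto G_\Omega(x^*,y)$ to pass from almost every $y$ to every $y$. It also needs neither the symmetry of $R_\Omega$ nor uniform Sobolev bounds. Its only genuinely new ingredient is the half-disk flux computation at $z$, which the paper already carries out in \eqref{eq.CondC2} and in Lemma \ref{lem:IPPeasy}.
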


\begin{proof}
For  $f\in L^1(\Omega)$ and $g\in L^1(\partial \Omega)$, 
we set 
\be\label{eq:mean}
m_\Omega(f) := \frac{1}{|\Omega|}\int_{\Omega}f(y)dy \text{ and } m_{\partial \Omega}(g) := \frac{1}{|\partial \Omega|}\int_{\Omega}g(z)d\sigma (z). 
\ee 
  Let $u\in H^1(\Omega)$ be a solution to
\be\label{eq:NeumannConstant1}
\left\{
    \begin{aligned}
        \Delta u &= 1 &&\text{in }\Omega,\\ 
        \partial_{n_\Omega}u &= \frac{|\Omega|}{|\partial\Omega|} &&\text{on } \partial\Omega,
    \end{aligned}
\right.
\ee
For $f\in \ccc^\infty(\overline\Omega)$, we also consider a solution    $u_f\in H^1(\Omega)$  to
\be\label{eq:NeumannConstant2}
\left\{
    \begin{aligned}
        \Delta u_f &= f - m_\Omega(f) &&\text{in }\Omega,\\ 
        \partial_{n_\Omega} u_f &= 0 &&\text{on } \partial\Omega.
    \end{aligned}
\right.
\ee
We emphasize that solutions to the two aforementioned Neumann problems exist  since the following compatibility condition are satisfied:
\bes
\int_\Omega u = \int_{\partial\Omega}\partial_{n_\Omega}u\, d\sigma  \quad  \text{ and }  \quad \int_\Omega u_f = \int_{\partial\Omega}\partial_{n_\Omega}u_f\, d\sigma,
\ees
Furthermore, the solutions $u$ and $u_f$ are uniquely defined up to an additive constant. We leave these constants unspecified, as they do not affect the subsequent analysis. 
Note also that by  the elliptic regularity theory for Neumann problems,  $u,u_f\in \ccc^\infty(\overline\Omega)$.
In what follows,  $(x_n)_{n\geq0}\subset\Omega$ is a sequence such that $x_n\to x^*\in\partial\Omega$ as $n\to+\infty$.
 Applying Lemma \ref{lem:IPPeasy} and using \eqref{eq:Greens1} yields:
 \bes
\forall x \in \Omega, u(x) = m_{\partial\Omega}(u) + \int_\Omega G_\Omega(y,x)dy \, \text{ and } \, \forall x \in \partial \Omega,  \,  u(x) = m_{\partial\Omega}(u) + \int_\Omega G_{\partial \Omega}(y,x)dy
\ees
By continuity of $u$ at $x^*$, it then follows that:
\be\label{eq.Cv-b}
\int_{\Omega} G_\Omega(y,x_n)dy \xrightarrow[n\to +\infty]{} \int_{\Omega}G_{\partial\Omega}(y,x^*)dy.
\ee 
Applying the same argument to $u_f$, we obtain
\bes
\int_{\Omega} G_\Omega(y,x_n)[f(y)-m_\Omega(f)]dy \xrightarrow[n\to +\infty]{} \int_{\Omega}G_{\partial\Omega}(y,x^*)[f(y)-m_\Omega(f)]dy.
\ees
Combining this with \eqref{eq.Cv-b}, we deduce that for all $f \in \ccc^\infty(\overline\Omega)$,
\be\label{eq.Cv-F}
\int_{\Omega} G_\Omega(y,x_n)f(y)dy \xrightarrow[n\to +\infty]{} \int_{\Omega}G_{\partial\Omega}(y,x^*)f(y)dy.
\ee 
Let us now take $f\in\ccc^\infty_c(\Omega)$. Using \eqref{eq:Greens2}, for all $n\geq 0$, we write
\bes
\int_{\Omega} G_\Omega(y,x_n)f(y)dy = \int_{\text{supp}(f)}\Big[\frac{1}{2\pi}\ln|y-x_n|+ R_\Omega(y,x_n)\Big] f(y)dy.
\ees
We want to pass to the limit $n\to +\infty$ in the previous equality. 
Since $x_n\to x^*\in \partial\Omega$, there exist $n_0\geq 0$ and constants $0<c_0<C_0$ such that 
\bes
c_0\leq |y-x_n|\leq C_0 \text{ for all }n\geq n_0\text{ and }y\in \text{supp}(f).
\ees
Hence, by the dominated convergence theorem,
\bes
\int_{\Omega}\ln|y-x_n|f(y)dy \xrightarrow[n\to +\infty]{} \int_{\Omega}\ln|y-x^*|f(y)dy.
\ees 
Moreover, by \eqref{eq:RGreens} and elliptic estimates for Neumann problems (see e.g. \cite[Theorem 4]{Mi78} or \cite[Theorem 2.2.2.5]{Gr11}), there exists $C>0$ such that for all $y\in \text{supp}(f)$:
\bes
\Vert R_\Omega(y,\cdot)\Vert^2_{H^2(\Omega)}\leq C \Vert w_{\Omega}(y,\cdot)\Vert^2_{L^2(\partial\Omega)}\leq C\Big[1 + \int_{\partial\Omega}\frac{d\sigma(z) }{|z-y|}\Big]\leq C.
\ees
By Sobolev embedding, this implies that $R_\Omega$ is uniformly bounded on $\overline\Omega \times \text{supp}(f)$. Using again the dominated convergence theorem, together with the fact that  $R_\Omega(y,x_n) = R_\Omega(x_n,y)$ (see Lemma \ref{le.sym}), we obtain:
\bes
\int_{\Omega} R_\Omega(x_n,y)f(y)dy \xrightarrow[n\to +\infty]{} \int_{\Omega}R_\Omega(x_*,y)f(y)dy.
\ees
Passing to the limit $n\to +\infty$ in  \eqref{eq.Cv-F} yields  for all $f \in \ccc^\infty_c(\Omega)$ and all $x^* \in \partial\Omega$:
\bes
\int_{\Omega} G_\Omega(x^*,y)f(y)dy = \int_{\Omega}G_{\partial\Omega}(y,x^*)f(y)dy.
\ees
This implies that for all $ x_*\in\partial\Omega$ and $y\in\Omega$, $G_\Omega(x^*,y) = G_{\partial\Omega}(y,x^*)$. 
The proof of the lemma is complete. 
\end{proof}
%


Lemma \ref{lem:symmetry} implies the following  unified formula for smooth functions.

\begin{proposition}\label{prop:IPPeasy}
    Let $u\in\ccc^\infty(\overline\Omega)$. Consider $x\in\overline\Omega$. Then, one has
    \bes
    u(x) = \frac{1}{|\partial\Omega|}\int_{\partial\Omega}u(z)d\sigma(z) + \int_\Omega G_\Omega(x,y)\Delta u(y)dy - \int_{\partial\Omega}G_{\partial\Omega}(x,z)\partial_{n_\Omega(z)}u(z)d\sigma(z).
    \ees
\end{proposition}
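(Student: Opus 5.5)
The plan is to derive Proposition \ref{prop:IPPeasy} directly from Lemma \ref{lem:IPPeasy}, using the symmetry results (Lemma \ref{le.sym} and Lemma \ref{lem:symmetry}) to bring the two formulas of Lemma \ref{lem:IPPeasy} into a common form. We split into the cases $x\in\Omega$ and $x\in\partial\Omega$.

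\emph{Case $x\in\Omega$.} The first formula of Lemma \ref{lem:IPPeasy} contains $G_\Omega(y,x)$ in the volume integral and $G_\Omega(\zeta,x)$, with $\zeta\in\partial\Omega$, in the boundary integral.
\begin{itemize}
\item For the volume integral, Lemma \ref{le.sym} gives $G_\Omega(y,x)=G_\Omega(x,y)$ for all $y\in\Omega\setminus\{x\}$, a set of full measure.
\item For the boundary integral, Lemma \ref{lem:symmetry}, applied with the interior point $x$ and the boundary point $\zeta$, gives $G_\Omega(\zeta,x)=G_{\partial\Omega}(x,\zeta)$.
\end{itemize}
Substituting these two identities gives exactly the stated formula.

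\emph{Case $x=z\in\partial\Omega$.} The second formula of Lemma \ref{lem:IPPeasy} contains $G_{\partial\Omega}(y,z)$ with $y\in\Omega$, and $G_{\partial\Omega}(\zeta,z)$ with $\zeta\in\partial\Omega$.
\begin{itemize}
\item The boundary term is handled by the symmetry of $G_{\partial\Omega}$ on $\partial\Omega\times\partial\Omega$ from Lemma \ref{le.sym}. This gives $G_{\partial\Omega}(\zeta,z)=G_{\partial\Omega}(z,\zeta)$ for $\zeta\neq z$, which holds for $\sigma$-almost every $\zeta$. The singularity at $\zeta=z$ is only logarithmic, so the integral is unaffected.
\item For the volume term, Lemma \ref{lem:symmetry} with the interior point $y$ and the boundary point $z$ gives $G_{\partial\Omega}(y,z)=G_\Omega(z,y)$. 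Here $G_\Omega(z,y)$ is understood as the value at the boundary point $z$ of the function $G_\Omega(\cdot,y)\in\ccc^\infty(\overline\Omega\setminus\{y\})$, and this is precisely the quantity $G_\Omega(x,y)$ appearing in the statement when $x=z$.
\end{itemize}
Substituting again yields the claimed formula with $x=z$.

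The main point requiring care is notational rather than analytic. For $x\in\partial\Omega$, the expressions $G_\Omega(x,y)$ and $G_{\partial\Omega}(x,z)$ must be read with the correct argument ordering, namely as the continuous boundary extension of the first variable. One must also check that the exceptional sets where symmetry is not asserted, the diagonal $y=x$ or $\zeta=z$, are null sets for the relevant measures. Both integrands are integrable because of their logarithmic singularities and the boundedness of $\Delta u$ and $\partial_n u$ for $u\in\ccc^\infty(\overline\Omega)$. With these checks in place, the proposition follows directly from the three preceding lemmas.
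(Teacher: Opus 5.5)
Your proposal is correct and takes the same route as the paper, whose proof is the one-liner that Proposition~\ref{prop:IPPeasy} follows immediately from Lemmas~\ref{lem:IPPeasy} and~\ref{lem:symmetry}; your case split simply carries out that substitution. You also explicitly invoke Lemma~\ref{le.sym}, to get $G_\Omega(y,x)=G_\Omega(x,y)$ in the interior case and $G_{\partial\Omega}(\zeta,z)=G_{\partial\Omega}(z,\zeta)$ for $\zeta\neq z$ in the boundary case. The paper leaves this step implicit, but its argument genuinely needs it.
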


\begin{proof}
This follows immediately from Lemmas \ref{lem:IPPeasy} and \ref{lem:symmetry}. 
\end{proof}

The formula provided in Proposition \ref{prop:IPPeasy} unifies the \textit{a priori} two distinct identities of Lemma \ref{lem:IPPeasy} into a single, comprehensive expression that remains valid for any $x \in \overline{\Omega}$, regardless of whether $x \in \Omega$ or $x \in \partial\Omega$. In the remainder of this work, to establish Theorems \ref{thm:1} and \ref{thm:2}, we intend to apply this integration-by-parts formula to the non-smooth functions $\mathbf e^{\bm \varepsilon}$ and $\mathbf p_i^{\bm \varepsilon}$, respectively. Consequently, the regularity assumptions must be appropriately relaxed. Special care is required in this analysis, as these functions exhibit   a lack of smoothness at the transition junctions between the Dirichlet and Neumann boundary conditions (see Section \ref{sub:regu}).

\subsection{Integration by parts formula for non-smooth functions}
\begin{proposition}\label{prop:IPPhard}
    Let $u\in H^1(\Omega)$ such that $\Delta u\in\ccc^\infty(\Omega)$ and $\partial_{n_\Omega}u\in L^q(\partial\Omega)$ for some $q>1$. Then $u\in\ccc^{0,\alpha}(\overline\Omega)$ for some $\alpha>0$, and for all $x\in\overline\Omega$,
    \bes
    u(x) = \frac{1}{|\partial\Omega|}\int_{\partial\Omega}u(z)d\sigma(z) + \int_\Omega G_\Omega(x,y)\Delta u(y)dy - \int_{\partial\Omega}G_{\partial\Omega}(x,z)\partial_{n_\Omega(z)}u(z)d\sigma(z).
    \ees
\end{proposition}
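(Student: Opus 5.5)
The plan is to approximate $u$ by smooth functions, apply Proposition~\ref{prop:IPPeasy} to each approximant, and pass to the limit. Write $f:=\Delta u$ and $g:=\partial_{n_\Omega}u\in L^q(\partial\Omega)$; for simplicity we take $f$ bounded, as in all our applications. We first record the regularity. Since $u\in H^1(\Omega)$, it is the weak Neumann solution with data $(f,g)$, so the compatibility condition $\int_\Omega f=\int_{\partial\Omega}g\,d\sigma$ holds. Because $g\in L^q(\partial\Omega)\hookrightarrow W^{-1/p,p}(\partial\Omega)$ for suitable $p>2$ (dual of $W^{1/p,p'}(\partial\Omega)\hookrightarrow L^{q'}(\partial\Omega)$), Neumann $L^p$ theory (as used via \cite{Sh68} in Lemma~\ref{lem:fk}) gives $u\in W^{1,p}(\Omega)$ for some $p>2$. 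Sobolev embedding then yields $u\in\ccc^{0,\alpha}(\overline\Omega)$ with $\alpha=1-2/p$.

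For the approximation, choose $g_n\in\ccc^\infty(\partial\Omega)$ with $g_n\to g$ in $L^q(\partial\Omega)$, and smooth $f_n\to f$ in $L^r(\Omega)$ for large $r$. Adjust $f_n$ by constants so that $\int_\Omega f_n=\int_{\partial\Omega}g_n$. Let $u_n\in\ccc^\infty(\overline\Omega)$ solve $\Delta u_n=f_n$ in $\Omega$ and $\partial_n u_n=g_n$ on $\partial\Omega$, normalized so that $m_{\partial\Omega}(u_n)=m_{\partial\Omega}(u)$. The same stability estimate as above applies to $u-u_n$: its $W^{1,p}$ norm modulo constants is bounded by $\|f-f_n\|_{L^r}+\|g-g_n\|_{L^q}$. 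Using the boundary-mean normalization and a Poincaré inequality, we get $u_n\to u$ in $W^{1,p}(\Omega)$, hence uniformly on $\overline\Omega$. Proposition~\ref{prop:IPPeasy} applied to $u_n$ gives the identity with $(u_n,f_n,g_n)$ for every $x\in\overline\Omega$.

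It remains to pass to the limit for a fixed $x\in\overline\Omega$, and this is the step requiring care. The left side and the boundary mean converge by uniform convergence. For the volume term, $y\mapsto G_\Omega(x,y)$ lies in $L^{r'}(\Omega)$ uniformly in $x\in\overline\Omega$: the logarithmic singularity is integrable and $R_\Omega$, $R_{\partial\Omega}$ are bounded; for $x\in\partial\Omega$ we use Lemma~\ref{lem:symmetry} and regard $G_\Omega(x,\cdot)$ as $G_{\partial\Omega}(\cdot,x)$. Hence that term converges. The boundary term is the main obstacle. We need $z\mapsto G_{\partial\Omega}(x,z)\in L^{q'}(\partial\Omega)$ for every $x\in\overline\Omega$, where $q'<\infty$ can be arbitrarily large. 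The singular part $\frac1\pi\ln|x-z|$ belongs to every $L^{s}(\partial\Omega)$, $s<\infty$, uniformly in $x\in\overline\Omega$, including when $x$ lies on $\partial\Omega$. For $R_{\partial\Omega}(x,\cdot)$ we invoke the symmetry of Lemma~\ref{le.sym} on $\partial\Omega\times\partial\Omega$ and smoothness in $x$ for fixed $z$. The interior case $x\in\Omega$ is obtained via $G_{\partial\Omega}(x,z)=G_\Omega(z,x)$ with $R_\Omega(\cdot,x)$ smooth. What must be checked is a bound on $R$ uniform in the pole, e.g. via the $H^2$ estimate used in the proof of Lemma~\ref{lem:symmetry}, since $\|w_{\partial\Omega}(\cdot,z)\|_{L^2}$ is bounded uniformly. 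Hölder's inequality then gives $\int_{\partial\Omega}G_{\partial\Omega}(x,z)(g_n-g)\,d\sigma\to0$, which concludes the proof.
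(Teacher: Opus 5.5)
Your proposal is correct and follows essentially the same route as the paper: approximate $g$ in $L^q(\partial\Omega)$ by smooth data, use the $W^{1,p}$ Neumann estimate with $W^{-1/p,p}(\partial\Omega)$ data for $p\in(2,2q]$ together with Sobolev embedding to get uniform convergence, apply Proposition~\ref{prop:IPPeasy} to the smooth approximants, and pass to the limit in the boundary term by H\"older's inequality, since $\ln|x-\cdot|\in L^{q'}(\partial\Omega)$. The differences are minor. The paper keeps $f$ fixed and restores compatibility by shifting $g_k$ by a constant, so no volume-term limit is needed; you also approximate $f$, which is the relaxation described in the paper's Note. Finally, the uniform-in-the-pole bound on $R$ that you flag is unnecessary, because $x$ is fixed when you pass to the limit.
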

\noindent
\textbf{Note}.  The assumption that $\Delta u\in\ccc^\infty(\Omega)$ is smooth in  Proposition \ref{prop:IPPhard} can be relaxed. For instance, the assertion of Proposition \ref{prop:IPPhard}  is still valid   if  $\Delta u\in L^p(\Omega)$ for some $p\in (1,+\infty)$. This follows by a simple adaption of the proof below and more precisely by approximating  $f=\Delta u$  in $L^p(\Omega)$ by smooth functions $f_k$ over $\overline \Omega$.

\begin{proof}
Let $q>1$ and set $
\mathscr V_q = \big \{u\in H^1(\Omega)\ |\ \Delta u\in \ccc^\infty(\Omega),\partial_{n_\Omega}u\in L^q(\partial\Omega) \big  \}$. 
Let $u \in \mathscr V_q$. Set $f := \Delta u \in \ccc^\infty(\Omega)$ and $g := \partial_n u \in L^q(\partial\Omega)$. Define (see \eqref{eq:mean})  
$$\hat u := u - m_\Omega(u).$$
 Then $\hat u$ is the weak solution in $H^1(\Omega)$ to the Neumann problem with $L^q$- boundary data:
\bes
\left\{
    \begin{aligned}
        \Delta \mathbf w &= f &&\text{in }\Omega,\\
        \partial_{n_\Omega} \mathbf w &= g &&\text{on }\partial\Omega,\\
        m_\Omega({\mathbf w}) &=0.
    \end{aligned}
\right.
\ees
This problem admits a unique solution (see e.g. \cite[Section 5]{TaKiBr13}) thanks to the compatibility condition
\be\label{eq:compat}
\int_\Omega f(x)\, dx = \int_{\partial\Omega}g(z)\, d\sigma(z).
\ee 
Note indeed that $\partial_{n_\Omega} u$ is weakly defined as
\bes
\<\partial_{n_\Omega} u,v\>_{H^{-1/2}(\partial\Omega),H^{1/2}(\partial\Omega)}:=\int_\Omega\Delta u(x) \, v(x)dx + \int_\Omega\nabla u(x)\cdot\nabla v(x)dx, \ v\in H^{1/2}(\partial\Omega) .
\ees 
Taking $v \equiv 1$ yields \eqref{eq:compat}.  
Let us now consider a sequence  $(g_k )_{k \in \N} \subset \ccc^\infty(\partial\Omega)$ such that $g_k\to g$ in $L^q(\partial\Omega)$.  
Define
\bes
h_k = g_k  - m_{\partial\Omega}(g_k ) + m_{\partial\Omega}(g).
\ees
Let $\hat u_k$ denote the unique solution to the following Neumann problem in $H^1(\Omega)$ with smooth boundary data:
\bes
\left\{
    \begin{aligned}
        \Delta \mathbf w &= f &&\text{in }\Omega,\\
        \partial_{n_\Omega} \mathbf w &= h_k &&\text{on }\partial\Omega,\\
        m_\Omega({\mathbf w}) &= 0.
    \end{aligned}
\right.
\ees
Note there is indeed a unique solution   $\hat u_k$ in $H^1(\Omega)$ to the previous Neumann problem. In addition,  by the elliptic regularity  theory for Neumann problems, $\hat u_k\in \ccc^\infty(\overline\Omega)$.
We now claim  that 
\be\label{eq.kappan}
\sup_{x\in\overline\Omega}|\hat u_k(x) - \hat u(x)| \xrightarrow[k\to +\infty]{} 0.
\ee
Let us prove it. Using \cite[Proposition 2.3]{Ar18}, for all $1<p<+\infty$, there exists $C = C(p,\Omega)>0$ such that
\bes
\Vert\hat u_k - \hat u\Vert_{W^{1,p}(\Omega)} \leq C \Vert h_k - g\Vert_{W^{-1/p,p}(\partial\Omega)}.
\ees
Now, the Sobolev embeddings yields $L^q(\partial\Omega)\hookrightarrow W^{-1/p,p}(\partial\Omega)$ as soon as $p\leq2q$. Moreover, we also have $W^{1,p}(\Omega)\hookrightarrow \mathcal C^{0,\alpha}(\overline\Omega)$ as soon as $\alpha \leq 1-2/p$. Hence, considering $p\in(2,2q]$ yields
\bes
\sup_{x\in\overline\Omega}|\hat u_k(x) - \hat u(x)| \leq C \vvert{h_k - g}_{L^q(\partial\Omega)}.
\ees

Finally, using H\"older's inequality, it holds that:
\bes
\begin{split}
    \Vert g - h_k \Vert_{L^q(\partial\Omega)} &\leq \Vert g - g_k \Vert_{L^q(\partial\Omega)} + \Vert m_{\partial\Omega}(g) - m_{\partial\Omega}(g_k) \Vert_{L^q(\partial\Omega)}\\
    &= \Vert g - g_k \Vert_{L^q(\partial\Omega)} + |m_{\partial\Omega}(g) - m_{\partial\Omega}(g_k)| |\partial\Omega|^{1/q}\\
    &\leq \Vert g - g_k \Vert_{L^q(\partial\Omega)} + |\partial\Omega|^{1/q'-1}\Vert g - g_k \Vert_{L^q(\partial\Omega)} |\partial\Omega|^{1/q},
\end{split}
\ees
the claim then arises from the convergence $g_k\to g$ in $L^q(\partial\Omega)$.

We are now in  position to conclude the proof of Proposition \ref{prop:IPPhard}. Let us consider the case when $x\in \partial  \Omega$ (the case when $x\in \Omega$ is treated similarly). 
By Proposition \ref{prop:IPPeasy}, for all $x \in \partial\Omega$, we have:
\be\label{eq.LGg}
\hat u_k(x) = \frac{1}{|\partial\Omega|}\int_{\partial\Omega}\hat u_k(z)d\sigma(z) + \int_\Omega G_\Omega(x,y)\Delta\hat u_k(y)dy - \int_{\partial\Omega}G_{\partial\Omega}(x,z)h_k(z)d\sigma(z).
\ee
We want to  pass to the limit as $k \to +\infty$ in the previous equality. We first notice that  $\Delta \hat u_k = f$. It thus only remains to deal with the   boundary term. We  have by \eqref{eq:Greens1}:
\bes
\int_{\partial\Omega}G_{\partial\Omega}(x,z)h_k(z)d\sigma(z) = \int_{\partial\Omega}G_{\partial\Omega}(x,z)g_k (z)d\sigma(z).
\ees
Using Hölder's inequality, it holds that:
\begin{equation}\label{eq.int1}
\Big|\int_{\partial\Omega}\ln|x-z|(g_k (z)-g(z))d\sigma(z)\Big| \leq \Vert \ln|x-\cdot| \Vert_{L^{q'}(\partial\Omega)} \Vert g_k -g \Vert_{L^q(\partial\Omega)},
\end{equation}
which tends to $0$ as $k\to +\infty$. A similar argument applies to the regular part  $z\in\partial\Omega\mapsto R_{\partial\Omega}(x,z)$, which, we recall,  is a smooth function. Therefore,
\be\label{eq.Cvvo}
\int_{\partial\Omega}G_{\partial\Omega}(x,z)g_k^{\bm \varepsilon}(z)d\sigma(z) \xrightarrow[k\to+\infty]{} \int_{\partial\Omega}G_{\partial\Omega}(x,z)g(z)d\sigma(z)
\ee 
Finally, the convergences $\hat u_k(x) \to \hat u(x)$ and $m_{\partial\Omega}(\hat u_k) \to m_{\partial\Omega}(\hat u)$ follow from \eqref{eq.kappan}. We can thus pass to the limit $k\to +\infty$ in  \eqref{eq.LGg}  to deduce the desired result.
The proof of Proposition \ref{prop:IPPhard} is complete. 
\end{proof}

Using Proposition \ref{prop:IPPhard} together with  the regularity results \eqref{eq:regularity}, \eqref{eq:W1p}, \eqref{eq:holder}, and \eqref{eq:tangential}, we obtain the following result. 

\begin{corollary}\label{cor:IPP}
    Let $i\in\lb1,N\rb$. For every $x\in\overline\Omega$,
    \bes
    \mathbf e^{\bm \varepsilon}(x) = \frac{1}{|\partial\Omega|}\int_{\partial\Omega}\mathbf e^{\bm \varepsilon}(z)d\sigma(z) - V(x) - \int_{\partial\Omega}G_{\partial\Omega}(x,z)\partial_{n_\Omega(z)}\mathbf e^{\bm \varepsilon}(z)d\sigma(z),
    \ees
    and
    \bes
    \mathbf p_i^{\bm \varepsilon}(x) = \frac{1}{|\partial\Omega|}\int_{\partial\Omega}\mathbf p_i^{\bm \varepsilon}(z)d\sigma(z) - \int_{\partial\Omega}G_{\partial\Omega}(x,z)\partial_{n_\Omega(z)}\mathbf p_i^{\bm \varepsilon}(z)d\sigma(z).
    \ees
\end{corollary}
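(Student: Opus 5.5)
Corollary \ref{cor:IPP} will follow by applying Proposition \ref{prop:IPPhard} to $u=\mathbf e^{\bm \varepsilon}$ and to $u=\mathbf p_i^{\bm \varepsilon}$, and then simplifying the volume term with the equations \eqref{eq:E1}--\eqref{eq:E2}. The work is to check the three hypotheses of Proposition \ref{prop:IPPhard} and to identify the function $V$. I interpret $V$ as $V(x)=\int_\Omega G_\Omega(x,y)\,dy$, i.e. the contribution of $\Delta \mathbf e^{\bm \varepsilon}=-1$.

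\textbf{Hypotheses.} By construction, $\mathbf e^{\bm \varepsilon},\mathbf p_i^{\bm \varepsilon}\in H^1(\Omega)$. By \eqref{eq:E1}, \eqref{eq:E2} and the interior smoothness \eqref{eq:regularity}, $\Delta \mathbf e^{\bm \varepsilon}=-1$ and $\Delta\mathbf p_i^{\bm \varepsilon}=0$ pointwise in $\Omega$, so both Laplacians lie in $\ccc^\infty(\Omega)$.

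The delicate hypothesis is $\partial_{n_\Omega}u\in L^q(\partial\Omega)$ for some $q>1$, understood as the weak (distributional) normal derivative used in the proof of Proposition \ref{prop:IPPhard}. I proceed in two steps.
\begin{enumerate}
\item By \eqref{eq:regularity}, the functions are smooth up to $\partial\Omega\setminus\partial\Sigma_\ddd$. There the weak normal derivative coincides with the pointwise one: it vanishes on $\Sigma_\nnn^{\bm \varepsilon}$ and is smooth on $\Sigma_\ddd^{\bm \varepsilon}$.
\item By \eqref{eq:nontangential} and \eqref{eq:tangential}, this pointwise trace $\nabla u|_{\partial\Omega}\cdot n$ lies in $L^q(\partial\Omega)$ for some $q>1$.
\end{enumerate}

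It remains to show that the weak normal derivative is exactly this $L^q$ function, with no extra distribution concentrated on the finite set $\partial\Sigma_\ddd$. To do this, I apply Green's formula for $v$ smooth on the domains $\Omega\setminus\bigcup_{z\in\partial\Sigma_\ddd}\overline B(z,r)$ and let $r\to0$. The contribution of the small arcs $\Omega\cap\partial B(z,r)$ is bounded by $\int_{\Omega\cap \partial B(z,r)}|\nabla u|\,d\sigma$, and I need this to tend to $0$. I would control it using the non-tangential maximal function estimate \eqref{eq:nontangential}, or the $W^{1,q}$ bound \eqref{eq:W1p} with $q>2$. The latter gives a radius sequence $r_n\to0$ along which $r_n\int_{\partial B(z,r_n)}|\nabla u|^q\,d\sigma\to0$, hence, by Hölder, $\int_{\partial B(z,r_n)}|\nabla u|\,d\sigma\to0$. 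The function $u$ itself is bounded by \eqref{eq:holder}, so the remaining boundary terms pass to the limit as well.

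\textbf{Conclusion.} Once the hypotheses are verified, Proposition \ref{prop:IPPhard} gives the representation at every $x\in\overline\Omega$; continuity of $u$ is already ensured by \eqref{eq:holder}. Inserting $\Delta\mathbf p_i^{\bm \varepsilon}=0$ removes the volume term, which yields the second formula. Inserting $\Delta\mathbf e^{\bm \varepsilon}=-1$ produces $-\int_\Omega G_\Omega(x,y)\,dy=-V(x)$, which yields the first formula. Note that $V$ is well defined for all $x\in\overline\Omega$ because of the logarithmic integrability of $G_\Omega$. For $x\in\partial\Omega$, the identity $G_\Omega(x,y)=G_{\partial\Omega}(y,x)$ from Lemma \ref{lem:symmetry} gives the consistent meaning of $V$.

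I expect the main obstacle to be this identification of the weak normal derivative with the pointwise $L^q$ trace near the Dirichlet–Neumann junctions $\partial\Sigma_\ddd$. Everything else is direct substitution.
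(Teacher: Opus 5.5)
Your proposal is correct and follows the paper's route. The paper obtains Corollary~\ref{cor:IPP} directly from Proposition~\ref{prop:IPPhard} combined with \eqref{eq:regularity}, \eqref{eq:W1p}, \eqref{eq:holder} and \eqref{eq:tangential}, and it uses $V=\int_\Omega G_\Omega(\cdot,y)\,dy$ exactly as you interpret it. Your extra step is sound: you show via the excised-domain Green formula and the $W^{1,q}$ bound that the weak normal derivative coincides with the pointwise $L^q$ trace, with no mass at $\partial\Sigma_\ddd$, which the paper leaves implicit.
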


 These integration-by-parts formulas relating the quantities of interest $\mathbf{e}^{\bm{\varepsilon}}$ and $\mathbf{p}_i^{\bm{\varepsilon}}$ outline our remaining tasks for proving Theorem \ref{thm:1} (the strategy for Theorem \ref{thm:2} being identical). To establish Theorem \ref{thm:1}, we fully exploit the integration-by-parts formula from Corollary \ref{cor:IPP} between $\mathbf{e}^{\bm{\varepsilon}}$ and the Neumann Green functions. Specifically, this formula reduces our goal to deriving asymptotic equivalents for $\partial_{n_\Omega} \mathbf{e}^{\bm{\varepsilon}}$ and $\int_{\partial\Omega}\mathbf{e}^{\bm{\varepsilon}}(z)d\sigma(z)$. By using the boundary conditions satisfied by $\mathbf{e}^{\bm{\varepsilon}}$, we derive a linear system coupling these two quantities. Inverting this system then requires a detailed study of certain singular integral operators acting on the boundary $\partial\Omega$, that we introduce and study in the next section.

%
%
%

\subsection{The finite Hilbert and logarithmic integral operators}\label{sub:key}
Let $p\in (1,+\infty)$, and denote $A^p((-1,1))$ the set of all functions $g:[-1,1]\to\R$ that are absolutely continuous and satisfy $g'\in L^p((-1,1))$. This space is complete when endowed with the norm
\be\label{eq.Ap}
\Vert g\Vert_{A^p((-1,1))}= |g(0)| + \Vert g'\Vert_{L^p((-1,1))}.
\ee
We also consider the finite Hilbert transform $\Tr_p$, defined by
\bes
\Tr_p\Phi(s) = \frac1i\int_{-1}^1\frac{\Phi(t)}{t-s}dt,\quad \Phi\in L^p((-1,1)),\ s\in(-1,1),
\ees
and the singular integral operator $\Lr_p$, defined by
\bes
\Lr_p\Phi(s) = \int_{-1}^1\ln|s-t|\Phi(t)dt,\quad \Phi\in L^p((-1,1)),\ s\in(-1,1).
\ees
We now recall some basic properties of these operators (see e.g.  \cite[Sections 13.5 and 13.6]{Jo82}).

\begin{proposition}\label{prop:Lr}
Let $p\in (1,+\infty)$. 
The operator $\Tr_p:L^p((-1,1))\to L^p((-1,1))$ (resp. $\Lr_p$) is bounded (resp. compact). Moreover, for all $\Phi\in L^p((-1,1))$,
\be\label{eq.=abs}
\forall s\in [-1,1],\ \Lr_p\Phi(s) - \Lr_p\Phi(-1) = \frac1i\int_{-1}^s\Tr_p\Phi(t)dt.
\ee
In particular, $\Ran\Lr_p\subset A^p((-1,1))$, and $\Lr_p:L^p((-1,1))\to A^p((-1,1))$ is bounded. If, in addition, $p\in(1,2)$, then $\Lr_p:L^p((-1,1))\to A^p((-1,1))$  {is one-to-one}.  
\end{proposition}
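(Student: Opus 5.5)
The plan has four parts, taken in this order: boundedness of $\Tr_p$, the identity \eqref{eq.=abs}, compactness of $\Lr_p$ together with the mapping into $A^p((-1,1))$, and finally injectivity for $p\in(1,2)$.

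\textbf{Boundedness of $\Tr_p$.} I would extend $\Phi\in L^p((-1,1))$ by zero to all of $\R$. Then $\Tr_p\Phi$ is, up to the factor $\pi/i$ and a sign, the restriction to $(-1,1)$ of the Hilbert transform of $\Phi\indic_{(-1,1)}$. Boundedness on $L^p((-1,1))$ for $1<p<\infty$ then follows directly from the M. Riesz theorem.

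\textbf{Proof of \eqref{eq.=abs}.} I would argue first for smooth compactly supported $\Phi$ and then pass to general $\Phi$ by density.
\begin{itemize}
\item For smooth $\Phi$, differentiation under the integral in the principal value sense gives
\[
\frac{d}{ds}\int_{-1}^1 \ln|s-t|\,\Phi(t)\,dt = \mathrm{p.v.}\int_{-1}^1 \frac{\Phi(t)}{s-t}\,dt .
\]
The right-hand side equals $\frac{1}{i}\Tr_p\Phi(s)$, once the sign convention and the factor $1/i$ in the definition of $\Tr_p$ are tracked.
\item Integrating from $-1$ to $s$ yields \eqref{eq.=abs} for smooth $\Phi$.
\item For general $\Phi\in L^p$, the kernel satisfies $\ln|s-\cdot|\in L^{p'}$ uniformly in $s$. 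Hence $\Phi\mapsto\Lr_p\Phi$ is continuous from $L^p$ into $L^\infty$, and in fact into continuous functions on $[-1,1]$.
\item The right-hand side of \eqref{eq.=abs} is continuous in $\Phi$ by boundedness of $\Tr_p$. Both sides therefore pass to the limit.
\end{itemize}

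\textbf{Compactness and the $A^p$ mapping.} From \eqref{eq.=abs}, $\Lr_p\Phi$ is absolutely continuous with derivative $\frac1i\Tr_p\Phi\in L^p$. Together with the bound $|\Lr_p\Phi(0)|\le C\|\Phi\|_{L^p}$, this gives boundedness of $\Lr_p$ into $A^p((-1,1))$.

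For compactness in $L^p$, I would note that the bound $\|\Lr_p\Phi\|_{A^p}\le C\|\Phi\|_{L^p}$ places the image of the unit ball in a bounded subset of $W^{1,p}((-1,1))$. Rellich's theorem then gives relative compactness in $L^p$. Alternatively, one can observe that the log kernel is Hilbert–Schmidt type, $\ln|s-t|\in L^r$ for every $r$.

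\textbf{Injectivity for $p\in(1,2)$: the main obstacle.} Suppose $\Lr_p\Phi=0$. Then by \eqref{eq.=abs}, $\Tr_p\Phi=0$, so $\Phi$ lies in the kernel of the finite Hilbert transform on $L^p$.

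I would invoke the classical description of that kernel (Tricomi, Söhngen; see \cite[Section 13.6]{Jo82}):
\begin{itemize}
\item for $1<p<2$ the kernel is spanned by $(1-t^2)^{-1/2}$;
\item for $p\ge 2$ the kernel is trivial.
\end{itemize}
So $\Phi=c(1-t^2)^{-1/2}$. Plugging this in, the known identity
\[
\int_{-1}^1 \frac{\ln|s-t|}{\sqrt{1-t^2}}\,dt = -\pi\ln 2
\]
gives $\Lr_p\Phi\equiv -c\pi\ln 2$. The condition $\Lr_p\Phi=0$ then forces $c=0$. This is exactly why the normalization $\ln(\varepsilon/2)$ appears in $K_k^{\varepsilon_k}$.

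The delicate point is citing (or reproving) the kernel characterization of $\Tr_p$ on $L^p$ for $p<2$. I would try to take it directly from \cite{Jo82}. If a proof is needed, I would use the Riemann–Hilbert approach via the Cauchy integral $F(z)=\int_{-1}^1\frac{\Phi(t)}{t-z}\,dt$:
\begin{itemize}
\item $\Tr_p\Phi=0$ means the boundary values of $F$ from above and below sum to zero on $(-1,1)$;
\item then $F(z)\sqrt{z^2-1}$ extends to an entire function;
\item the endpoint integrability $\Phi\in L^p$, $p>1$, together with decay at infinity, forces that entire function to be a constant.
\end{itemize}
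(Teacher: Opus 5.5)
\textbf{What holds up, and how it compares with the paper.} Everything you actually prove is correct. The individual steps are all sound:
\begin{itemize}
\item the M.~Riesz reduction for $\Tr_p$;
\item the density argument for \eqref{eq.=abs}: your sign bookkeeping is right, since $\frac1i\Tr_p\Phi(s)=\mathrm{p.v.}\int_{-1}^1\frac{\Phi(t)}{s-t}\,dt$, and $\ln|s-\cdot|\in L^{p'}$ uniformly in $s\in[-1,1]$ gives convergence of $\Lr_p\Phi_n(s)$ for every $s$, including $s=-1$;
\item compactness from the $A^p$ bound via Rellich or Arzel\`a--Ascoli;
\item injectivity from $\ker\Tr_p=\mathrm{span}\{(1-t^2)^{-1/2}\}$ combined with $\int_{-1}^1\ln|s-t|\,(1-t^2)^{-1/2}\,dt=-\pi\ln 2$.
\end{itemize}
The paper gives no argument of its own for this proposition; it simply quotes these facts from \cite[Sections 13.5 and 13.6]{Jo82}. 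Your write-up supplies the proof the paper outsources. The main input, the kernel of the finite Hilbert transform, is still taken from the same source. That is a legitimate and more self-contained route.

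\textbf{The omission: surjectivity.} Your own argument shows that the hypothesis $p\in(1,2)$ plays no role in injectivity. For $p\ge 2$ you have $(1-t^2)^{-1/2}\notin L^p$, so $\ker\Tr_p=\{0\}$, and $\Lr_p$ is injective for every $p\in(1,+\infty)$. The restriction $p<2$ only makes sense if ``one-to-one'' means one-to-one \emph{onto} $A^p((-1,1))$. This is also exactly what the paper uses later:
\begin{itemize}
\item it writes $\mathscr L_q^{-1}$ on $A^q((-1,1))$;
\item it evaluates $\mathscr L_q^{-1}$ on constants in \eqref{eq.inv1};
\item it inverts $\mathbf I+\mathbf L_q^{-1}\mathbf J^{\bm\varepsilon}_q$ by a Neumann series, which requires $\mathscr L_q^{-1}:A^q\to L^q$ to exist and be bounded.
\end{itemize}
Surjectivity is missing from your proposal. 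It is also where $p<2$ genuinely matters: for $p\ge 2$, $\Tr_p$ is not onto $L^p$, hence $\Lr_p$ is not onto $A^p$.

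\textbf{How to close it.} For $1<p<2$, $\Tr_p$ maps $L^p$ onto $L^p$ by the Tricomi--S\"ohngen inversion formula. Its $L^p$-boundedness is exactly where $p<2$ enters: the weight $(1-t^2)^{-p/2}$ is admissible (Muckenhoupt) for the Hilbert transform on $L^p$ only when $p<2$. Then proceed as follows.
\begin{itemize}
\item Given $g\in A^p((-1,1))$, pick $\Phi_0\in L^p$ with $\frac1i\Tr_p\Phi_0=g'$.
\item By \eqref{eq.=abs}, $\Lr_p\Phi_0-g$ is a constant $\kappa$.
\item Set $\Phi=\Phi_0+\frac{\kappa}{\pi\ln 2}(1-t^2)^{-1/2}$. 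This lies in $L^p$ because $p<2$, and your log identity gives $\Lr_p\Phi=g$.
\item Boundedness of $\Lr_p^{-1}:A^p\to L^p$ then follows from the open mapping theorem and the bound $\|\Lr_p\|_{L^p\to A^p}<\infty$ you already have.
\end{itemize}
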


We also refer to \cite[Section  13.6]{Jo82} and \cite[Chapter 7.3]{EsKa12} for an explicit expression of $\Lr_p^{-1}$. In particular, for $s\in(-1,1)$,
\begin{equation}\label{eq.inv1}
\Lr_p^{-1}(1)(s) = \frac{1}{\pi\ln(1/2)\sqrt{1-s^2}}.
\end{equation}
We also introduce two additional integral operators that are more directly related to our setting. First, recall that the windows $(\Sigma_\ddd^{k, \varepsilon_k} )_{k\in\lb1,N\rb}$ are defined by (see \eqref{eq.fene}):
\bes
\forall k\in\lb1,N\rb,\ \Sigma_\ddd^{k, \varepsilon_k}  = \partial\Omega\cap B(x^{(k)},\varepsilon_k).
\ees
 Since $\Omega$ is a smooth subdomain of $\mathbb R^2$, for every $k\in\lb1,N\rb$,  there exist $t_k \in \R$, $\delta_k>0$, $\varepsilon_k'\in (0,\delta_k)$, and a smooth bijection map $v_k:[t_k- \delta_k,t_k+ \delta_k]\to \mathbb R^2$ such that: 
 $$
 |v_k'| = 1, \ v_k(t_k) = x^{(k)},\  v_k\big ((t_k-\varepsilon_k',t_k+\varepsilon_k')\big )=\Sigma_\ddd^{k, \varepsilon_k}, \ v_k\big ([t_k-\varepsilon_k',t_k+\varepsilon_k']\big )= \overline \Sigma_\ddd^{k, \varepsilon_k}, $$
and as $\varepsilon_k\to 0$, 
$$ \varepsilon_k' = \varepsilon_k(1 + O(\varepsilon_k)).$$ 
The   condition $|v_k'| = 1$ above indicates that $v_k$ is   an arc-length parametrization of $\overline \Sigma_\ddd^{k, \varepsilon_k}$. 
Moreover, for  $j \neq k \in\lb1,N\rb$ we assume that $t_j\neq t_k$, and thus  every $|\bm \varepsilon|>0$ small enough,   $[t_k - \varepsilon_k', t_k + \varepsilon_k']$ and $[t_{j} - \varepsilon_{j}', t_{j} + \varepsilon_{j}']$ are disjoint.

We will also  use the rescaled parametrization of  $\overline \Sigma_\ddd^{k, \varepsilon_k} $ which is defined by:
\be\label{eq.ukc}
u_k(t) = v_k(t_k+t\varepsilon_k'), \ t \in [-1,1].
\ee
 We emphasize that   $u_k$ depends on $\varepsilon_k>0$ but not $v_k$, $\delta_k$, and $t_k$. Furthermore, since the intervals $[t_k - \varepsilon_k', t_k + \varepsilon_k']$ are pairwise disjoint, we can patch the local parametrizations $v_k|_{[t_k - \varepsilon_k', t_k + \varepsilon_k']}$ together to construct a single global smooth function $v$ on $\mathbb{R}$ such that $v = v_k$ on each $[t_k - \varepsilon_k', t_k + \varepsilon_k']$. 

\begin{remark}
Recall the definition of $\psi_k$ in Section \ref{sec.psik}. The parametrizations $\psi_k$ and $u_k$ describe the same portion of the boundary (namely $\Sigma_\ddd^{k, \varepsilon_k}$), but express it in different local coordinate systems. Since both $\psi_k$ and $v$ are smooth parametrizations of $\partial\Omega$ in a neighborhood of $x^{(k)}$, there exists a local smooth change of variables $\Upsilon_k$ such that
\bes
\psi_k(t) = v(\Upsilon_k(t)),
\ees
for $t$ in a neighborhood of $0$. By construction, we have $\Upsilon_k(0) = t_k$. Furthermore, because $d_0\Psi_k$ is unitary and $v$ is parametrized by arc length, the local coordinate induced by $\Psi_k$ agrees with the arc-length parameter at first order, yielding $\Upsilon_k'(0) = 1$. By Taylor expansion around $0$, it follows that:
\bes
\Upsilon_k(t) = t_k + t + O(t^2).
\ees
Inverting this relation gives the expansion for the inverse diffeomorphism: $\Upsilon_k^{-1}(\tau) = \tau - t_k + O((\tau - t_k)^2)$. Recalling the definition of the rescaled parametrization $u_k(s) = v(t_k + \varepsilon_k' s)$ for $s \in (-1,1)$, we can substitute $\tau = t_k + \varepsilon_k' s$ into the identity $v(\tau) = \psi_k(\Upsilon_k^{-1}(\tau))$ to obtain:
\bes
u_k(s) = \psi_k\big(\Upsilon_k^{-1}(t_k + \varepsilon_k' s)\big) = \psi_k\big(\varepsilon_k' s + O(\varepsilon_k^2)\big).
\ees
Finally, applying a standard Taylor expansion to the smooth function $\psi_k$ around the point $\varepsilon_k' s$ yields:
\bes
u_k(s) = \psi_k(\varepsilon_k' s) + O(\varepsilon_k^2),
\ees
uniformly for $s \in (-1,1)$. Thus, $u_k$ and $\psi_k$ differ only by a smooth reparametrization that is asymptotically a linear scaling.
\end{remark}

Let $j,k\in\lb1,N\rb$ with $j\neq k$. We define the linear operators
\be\label{eq.Ope1}
\Gr ^{j}(\Phi)(s) = \frac1{\varepsilon_j}\int_{-1}^1\ln\frac{|u_j(s)-u_j(t)|}{\varepsilon_j|s-t|}\Phi(t)\, dt,\quad \Phi\in L^p((-1,1)),\ s\in(-1,1),
\ee
and
\be\label{eq.Ope2}
\Hr ^{j,k}(\Phi)(s) = \frac1{|\bm \varepsilon|}\int_{-1}^1\ln\frac{|u_j(s)-u_k(t)|}{|x^{(j)} - x^{(k)}|}\Phi(t)\, dt,\quad \Phi\in L^p((-1,1)),\ s\in(-1,1).
\ee
Note that $\Gr^{k}(\Phi)(s)$ depends on $\varepsilon_k$, while $\Hr^{j,k}$ depends on the vector $\bm \varepsilon$ (see \eqref{eq.epsil}). For simplicity of notation, we omit this dependence from their respective symbols.  
We establish the following result concerning these operators.

\begin{lemma}\label{le.LeG}
Let $j,k\in\lb1,N\rb$ with $j\neq k$. For sufficiently small $|\bm \varepsilon|>0$, the operators $\Gr^{j}$ and $\Hr^{j,k}$ map $L^p((-1,1))$ into $A^p((-1,1))$. Moreover, as $|\bm \varepsilon|\to0$,
\be\label{eq.OpeE}
  \begin{split}
  \Vert \Gr^{j}\Vert_{L^p((-1,1))\to A^p((-1,1))} =  O(1) \,  \text{ and } \, \Vert\Hr^{j,k}\Vert_{L^p((-1,1))\to A^p((-1,1))} = O(1).
  \end{split}
\ee
\end{lemma}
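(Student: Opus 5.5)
The plan is to show that in both cases the kernel is a $\ccc^1$ function of $(s,t)\in[-1,1]^2$. I will bound its sup norm and its $\partial_s$-derivative uniformly in $|\bm\varepsilon|$ small, after dividing by the prefactor $1/\varepsilon_j$ or $1/|\bm\varepsilon|$. The conclusion then follows from an elementary lemma. Suppose $k(s,t)$ is continuous on $[-1,1]^2$ with $\partial_s k$ continuous and bounded, and set $T\Phi(s)=\int_{-1}^1 k(s,t)\Phi(t)\,dt$. Then, by differentiation under the integral sign (dominated convergence, since $\Phi\in L^p\subset L^1$), $T\Phi$ is $\ccc^1$, hence absolutely continuous, with $(T\Phi)'(s)=\int_{-1}^1\partial_sk(s,t)\Phi(t)\,dt$. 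Applying H\"older on $(-1,1)$ gives
\[
\Vert T\Phi\Vert_{A^p((-1,1))}\le |T\Phi(0)|+2^{1/p}\sup|(T\Phi)'| \le C\big(\sup|k|+\sup|\partial_s k|\big)\Vert\Phi\Vert_{L^p((-1,1))}.
\]
So it suffices to prove that $\sup|k|$ and $\sup|\partial_s k|$ are $O(1)$ for the two rescaled kernels.

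\emph{The operator $\Hr^{j,k}$.} Since $|u_j(s)-x^{(j)}|\le\varepsilon_j'$ and $|u_k(t)-x^{(k)}|\le\varepsilon_k'$, for $|\bm\varepsilon|$ small we have $|u_j(s)-u_k(t)|\ge |x^{(j)}-x^{(k)}|/2>0$. This makes the kernel smooth, and moreover
\[
\big|\,|u_j(s)-u_k(t)|-|x^{(j)}-x^{(k)}|\,\big|\le C|\bm\varepsilon|,
\]
so $\ln\frac{|u_j(s)-u_k(t)|}{|x^{(j)}-x^{(k)}|}=O(|\bm\varepsilon|)$. Differentiating in $s$ produces the factor $u_j'(s)=\varepsilon_j' v_j'(t_j+\varepsilon_j's)$, of size $O(\varepsilon_j)$, divided by a quantity bounded below. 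After multiplying by $1/|\bm\varepsilon|$, both bounds are $O(1)$.

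\emph{The operator $\Gr^{j}$.} This is the main point, because the kernel is a priori singular on the diagonal. I will write $a=t_j+\varepsilon_j's$, $b=t_j+\varepsilon_j't$ and
\[
\Theta(a,b):=\frac{|v_j(a)-v_j(b)|}{|a-b|}=\Big|\int_0^1 v_j'(b+\theta(a-b))\,d\theta\Big|.
\]
This shows that $\Theta$ is smooth on $[t_j-\delta_j,t_j+\delta_j]^2$ (after shrinking $\delta_j$ if needed) with $\Theta(a,a)=|v_j'(a)|=1$, so $\ln\Theta$ is smooth. The kernel becomes
\[
\ln\frac{\varepsilon_j'}{\varepsilon_j}+\ln\Theta(a,b),\qquad \ln\frac{\varepsilon_j'}{\varepsilon_j}=O(\varepsilon_j).
\]
The key observation is that $\partial_a\ln\Theta$ vanishes on the diagonal. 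Indeed, with $w=\int_0^1 v_j'(b+\theta(a-b))\,d\theta$, at $a=b$ we have $\partial_a w=\tfrac12 v_j''(a)$ and $w=v_j'(a)$. Since $|v_j'|=1$ gives $v_j'\cdot v_j''=0$, it follows that $\partial_a|w|^2=0$ there. Hence $\ln\Theta(a,b)=O(|a-b|^2)=O(\varepsilon_j^2)$ and $\partial_a\ln\Theta(a,b)=O(|a-b|)=O(\varepsilon_j)$. Then $\partial_s$ of the kernel equals $\varepsilon_j'\,\partial_a\ln\Theta=O(\varepsilon_j^2)$. After the prefactor $1/\varepsilon_j$, the sup of the kernel is $O(1)$ and the sup of its $s$-derivative is $O(\varepsilon_j)$.

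Combining the lemma with these two sets of bounds gives \eqref{eq.OpeE}. The only delicate step is the diagonal analysis for $\Gr^j$. Without the cancellation $v_j'\cdot v_j''=0$, one would only get $\ln\Theta=O(\varepsilon_j)$, whose size after the prefactor $1/\varepsilon_j$ is still $O(1)$. The cancellation is nonetheless what makes the constants uniform, and it is what produces the sharper $O(\varepsilon_j)$ bound on the derivative. Continuity of the kernel up to the diagonal is what justifies applying the lemma.
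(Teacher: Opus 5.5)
Your proposal is correct and follows essentially the paper's route. Both proofs show the two kernels are smooth on $[-1,1]^2$, bound the kernel and its $s$-derivative uniformly, and use $v_j'\cdot v_j''=0$ to get the $O(\varepsilon_j)$ derivative bound for $\Gr^{j}$. You work with the quotient $\Theta$ and a Taylor expansion of $\ln\Theta$ about the diagonal, where the paper uses the squared double-integral $J^j$; you also make explicit the H\"older step that the paper leaves implicit.

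One correction to your closing remark: the cancellation is not what makes the constants uniform. Smoothness of $\ln\Theta$ on a fixed square, together with $\ln\Theta=0$ on the diagonal, already gives $\sup|\ln\Theta|=O(\varepsilon_j)$ and $\sup|\partial_a\ln\Theta|=O(1)$. This yields $O(1)$ bounds for both the rescaled kernel and its $s$-derivative, and the cancellation only sharpens the latter to $O(\varepsilon_j)$.
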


\begin{proof}
We first consider the operator $\Gr^{j}$.  We denote its kernel by 
 \bes
 \forall s,t\in [-1,1]^2, \  g^{j}(s,t) =\frac1{2\varepsilon_j}\ln J^j(s,t),
 \ees
where   $$J^j(s,t)=\frac{|u_j(s) - u_j(t)|^2}{\varepsilon_j^2|s-t|^2} = \frac{1}{\varepsilon_j^2} \int_0^1 \int_0^1 u_j'\big(t + r(s-t)\big) \cdot u_j'\big(t + w(s-t)\big) \, dr \, dw.$$
The function $J^j(s,t)$ is thus  smooth  on $[-1,1]^2$. 
 In addition, for every $\varepsilon_j>0$, there is $c>0$ such that 
  \bes
 \min_{s,t\in[-1,1]}J^j(s,t)\geq c .
 \ees
 As a result,  for every $\varepsilon_j>0$, $g^j\in \mathcal C^\infty( [-1,1]^2)$   and $\Gr^{j}(\Phi)\in \ccc^\infty([-1,1])$. Let us now prove the first estimate in \eqref{eq.OpeE}. Recall  \eqref{eq.Ap}. 
 We have for every $s,t\in [-1,1]$:
 $$\partial_s g^j(s,t) = \frac{1}{2\varepsilon_j} \frac{\partial_s J(s,t)}{J(s,t)}.$$
Recall \eqref{eq.ukc}. Set for $r,w\in [0,1]$ and $s,t\in [-1,1]$:
$$\tau_r = t_j + \varepsilon_j'\big(t + r(s-t)\big) \quad \text{and} \quad \tau_w = t_j + \varepsilon_j'\big(t + w(s-t)\big).$$
Note that  $\partial_s \tau_r = \varepsilon_j' r$ and $\partial_s \tau_w = \varepsilon_j' w$. Hence, since $J^j(s,t) = (\varepsilon_j'/\varepsilon_j)^2 \int_0^1 \int_0^1 v'(\tau_r) \cdot v'(\tau_w) \, dr \, dw$, we have:
 $$\partial_s J(s,t) = \varepsilon_j' (\varepsilon_j'/\varepsilon_j)^2  \int_0^1 \int_0^1 \Big[ r \, v''(\tau_r) \cdot v'(\tau_w) + w \, v'(\tau_r) \cdot v''(\tau_w) \Big] \, dr \, dw.$$
Therefore, we get for all $s,t\in [-1,1]$:
\bes
 \begin{split}
 \partial_s g^j(s,t) &= \frac{1}{2\varepsilon_j} \cdot \frac{\varepsilon_j' (\varepsilon_j'/\varepsilon_j)^2\int_0^1 \int_0^1 \Big[ r \, v''(\tau_r) \cdot v'(\tau_w) + w \, v'(\tau_r) \cdot v''(\tau_w) \Big] \, dr \, dw}{(\varepsilon_j'/\varepsilon_j)^2 \int_0^1 \int_0^1 v'(\tau_r) \cdot v'(\tau_w) \, dr \, dw}\\
 &= \left(\frac{\varepsilon_j'}{2\varepsilon_j}\right) \cdot \frac{\int_0^1 \int_0^1 \Big[ r \, v''(\tau_r) \cdot v'(\tau_w) + w \, v'(\tau_r) \cdot v''(\tau_w) \Big] \, dr \, dw}{\int_0^1 \int_0^1 v'(\tau_r) \cdot v'(\tau_w) \, dr \, dw}.
 \end{split}
 \ees
 Recall that $\varepsilon_j' = \varepsilon_j(1 + O(\varepsilon_j))$. 
Note that as $\varepsilon_j \to 0$,   $\tau_r \to t_j$ and $\tau_w \to t_j$ (uniformy in $s,t\in [-1,1]$).  We claim that  as $\varepsilon_j \to 0$, uniformly in $s,t\in [-1,1]$:
 $$ \int_0^1 \int_0^1 v'(\tau_r) \cdot v'(\tau_w) \, dr \, dw =  1 + O(\varepsilon'_j).$$
Let us prove this claim. For every $s, t \in [-1,1]$ and $r, w \in [0,1]$:
\bes 
\begin{split}
 | v'(\tau_r) \cdot v'(\tau_w) - 1  | &= | v'(\tau_r) \cdot v'(\tau_w) - v'(t_j) \cdot v'(t_j) |\\
 & \leq  | v'(\tau_r) \cdot \big(v'(\tau_w) - v'(t_j)\big)  | +  | v'(t_j) \cdot \big(v'(\tau_r) - v'(t_j)\big)  | \\
 &\leq |v'(\tau_w) - v'(t_j)| + |v'(\tau_r) - v'(t_j)| \leq M_j |\tau_w - t_j| + M_j |\tau_r - t_j|,
 \end{split}
 \ees 
  where $M_j=\sup |v''|$. 
   Because $\tau_r - t_j = \varepsilon_j'\, [t + r(s-t) ]$ and $\tau_w - t_j = \varepsilon_j'[t + w(s-t)]$, 
we have for every $s, t \in [-1,1]$ and $r, w \in [0,1]$:
 $$\big| t + r(s-t) \big| \leq 3 \quad \text{and} \quad \big| t + w(s-t) \big| \leq 3$$
 Integrating over $[0,1]^2$ yields:
 $$\left| \int_0^1 \int_0^1 v'(\tau_r) \cdot v'(\tau_w) \, dr \, dw - 1 \right| \leq \int_0^1 \int_0^1 6M_j\varepsilon_j' \, dr \, dw = 6M_j\varepsilon_j'.$$
 This proves the claim. In addition, performing an asymptotic  expansion of $v''(\tau_r) \cdot v'(\tau_w)$  and $v'(\tau_r) \cdot v''(\tau_w) $ as $\varepsilon_j\to 0$ and using that  $v''(t_j) \cdot v'(t_j) =0$, we deduce that  as $\varepsilon_j \to 0$, uniformly in $s,t\in [-1,1]$:
 $$\int_0^1 \int_0^1 \Big[ r \, v''(\tau_r) \cdot v'(\tau_w) + w \, v'(\tau_r) \cdot v''(\tau_w) \Big] \, dr \, dw = O(\varepsilon_j').$$
In conclusion,  when $\varepsilon_j \to 0$, uniformly in $s,t\in [-1,1]$, we have:
 $$\partial_s g^j(s,t) = O(\varepsilon_j'),\  \text{ and thus } \Gr^{j}(\Phi)'= O(\varepsilon_j').$$
 Let us now look at the term $\Gr^{j}(\Phi)(0)$. We have using again that $v''(t_j) \cdot v'(t_j) =0$,  as $\varepsilon_j \to 0$ and  uniformly in $t\in [-1,1]$: 
 \bes 
\begin{split}
 J^j(0,t)=  \frac{|u_j(0) - u_j(t)|^2}{\varepsilon_j^2\, t^2} &=  (\varepsilon_j'/\varepsilon_j)^2   \int_0^1 \int_0^1 v'\big(t_j + \varepsilon_j' t(1-r)\big) \cdot v'\big(t_j + \varepsilon_j' t(1-w)\big) \, dr \, dw\\
 &= (\varepsilon_j'/\varepsilon_j)^2\big[1+ O(\varepsilon_j')\big].
 \end{split}
 \ees 
 This proves that uniformly in $t\in [-1,1]$, $g^{j}(0)=O(1)$. Hence, $\Gr^{j}(\Phi)(0)=O(1)$.   This completes the proof of the uniform bounds for   operator $\Gr^{j}$ asserted in \eqref{eq.OpeE}. 
  We now consider the the operator $\Hr^{j,k}$.
 Since $|x^{(j)}-x^{(k)}|>0$, for $|\bm \varepsilon|>0$ sufficiently small we have:
 \be\label{eq.dml}
 \min_{s,t\in[-1,1]}|u_j(s)-u_k(t)|\geq c.
 \ee
 It follows that $\Hr^{j,k}(\Phi)\in \ccc^\infty([-1,1])$.
Let us denote its kernel by:
 $$
\forall s,t\in [-1,1]^2, \ h^{j,k}(s,t) = \frac{1}{|\bm \varepsilon|} \ln \frac{|u_j(s) - u_k(t)|}{|x^{(j)} - x^{(k)}|}.
 $$
When   $|\bm \varepsilon|\to0$ we have uniformly in $s,t\in [-1,1]$:
\bes
\begin{split}
 \Hr^{j,k}(\Phi)(0)  &= \frac1{ |\bm \varepsilon|} \int_{-1}^1\Big( \ln \frac{ |  x^{(k)}-u_j(0)+  u_k(t)-x^{(k)} |}{|x^{(j)} - x^{(k)}|}\Big)\Phi(t)\, dt \\
    &=\frac1{ |\bm \varepsilon|}\int_{-1}^1 \ln(1+ O(\bm \varepsilon) ) \,   \Phi(t)\, dt.
\end{split}
\ees
Hence, $ \Hr^{j,k}(\Phi)(0) =O(1)$. 
Moreover, in the limit $|\bm \varepsilon| \to 0$ and uniformly in $s,t\in [-1,1]$, it holds that:
 $$\partial_s h^{j,k}(s,t) = \frac{1}{|\bm \varepsilon|} \frac{u_j'(s) \cdot (u_j(s) - u_k(t))}{|u_j(s) - u_k(t)|^2}=O(1).$$
 This completes the proof of the uniform bounds for   operator $\mathscr H^{j,k} $ asserted in \eqref{eq.OpeE}.  
 The proof of the lemma is complete.
 \end{proof}

In what follows, when viewing the operators \eqref{eq.Ope1} and \eqref{eq.Ope2} on $L^p((-1,1))$, we denote them respectively by  $\Gr^{j}_p$ and
 $ \Hr_p ^{j,k}$.  In addition,  with these two operators, we can write for every   $j\neq k\in\lb1,N\rb$, $\Phi\in L^p((-1,1))$, and $s\in(-1,1)$,
\be\label{eq:key1}
\int_{-1}^1\ln|u_j(s)-u_j(t)|\Phi(t)dt = \ln\varepsilon_j\int_{-1}^1\Phi(t)dt + \Lr_p(\Phi)(s) + \varepsilon_j\Gr_p^{j}(\Phi)(s).
\ee
 and  
\be\label{eq:key2}
\int_{-1}^1\ln |u_j(s)-u_k(t)|\Phi(t)dt = \ln|x^{(j)} - x^{(k)}|\int_{-1}^1\Phi(t)dt +  |\bm \varepsilon|\Hr_p^{j,k}(\Phi)(s).
\ee

\subsection{End of the proofs  of Theorems \ref{thm:1} and \ref{thm:2} via the Green method}\label{sec:thmBoris}
In this section, we prove Theorems \ref{thm:1} and \ref{thm:2} using the integration-by-parts formulas connecting $\mathbf{e}^{\bm{\varepsilon}}$ and $\mathbf{p}_i^{\bm{\varepsilon}}$. , we can now outline the proof strategy for Theorem \ref{thm:1}, which carries over identically to Theorem \ref{thm:2}. The proof of Theorem \ref{thm:1} relies on fully exploiting the integration-by-parts relation in Corollary \ref{cor:IPP} between $\mathbf{e}^{\bm{\varepsilon}}$ and the Green functions. In view of this identity, our primary objective is to establish asymptotic expansions for $\partial_{n_\Omega} \mathbf{e}^{\bm{\varepsilon}}$ and $\int_{\partial\Omega}\mathbf{e}^{\bm{\varepsilon}}(z)d\sigma(z)$. Through the boundary conditions of $\mathbf{e}^{\bm{\varepsilon}}$, we obtain a linear system involving these functions. To invert this system and complete the proof, we must analyze the properties of specific singular integral operators acting on the boundary of $\Omega$

\subsubsection{Study of the mean escape time}
\label{eq.Sec:thm1-}
Recall \eqref{eq:E2}.  With the notation introduced in \eqref{eq:mean}, we have:
\bes
m_{\partial\Omega}(\mathbf e^{\bm \varepsilon}) = \frac{1}{|\partial\Omega|}\int_{\partial\Omega}\mathbf e^{\bm \varepsilon}(z)d\sigma(z).
\ees
Thanks to \eqref{eq:tangential}, we can also perfectly define 
\bes
\mathbf T^{\bm \varepsilon}_j = \frac1\pi\int_{\Sigma_\ddd^{j, \varepsilon_j}}\partial_{n_\Omega(z)} \mathbf e^{\bm \varepsilon}(z)d\sigma(z).
\ees
The compatibility equation $
\int_\Omega\Delta\mathbf e^{\bm \varepsilon}(x)dx = \int_{\partial\Omega}\partial_{n_\Omega(z)}\mathbf e^{\bm \varepsilon}(z)d\sigma(z)$ 
implies that
\be\label{eq:sumtracee}
\sum_{j=1}^N\mathbf T^{\bm \varepsilon}_j = -\frac{|\Omega|}{\pi}.
\ee
We now introduce the following functions:
\bes
\forall t\in [-1,1], \ \Phi_j^{\varepsilon}(t) =  \varepsilon_j'\, \partial_{n_\Omega(u_j (t))}\mathbf e^{\bm \varepsilon}(u_j(t)).
\ees
With this notation, it holds that 
 $ \mathbf T^{\bm \varepsilon}_j = \frac 1\pi \int_{-1}^1\Phi_j^{\varepsilon}(t)\, dt$. 
Recall \eqref{eq:Greens2}. 
Let $s\in (-1,1)$. Applying Corollary \ref{cor:IPP} with $x\in\{u_1(s),\ldots,u_N(s)\}$, and the Dirichlet boundary condition satisfied by $\mathbf e^{\bm \varepsilon}$ (see \eqref{eq:E2}), we obtain the system, $\forall j\in\lb1,N\rb$:
\bes
\begin{split}
      0 &= m_{\partial\Omega}(\mathbf e^{\bm \varepsilon}) - \int_\Omega G_\Omega(u_j(s),y)dy\\
    &\quad- \frac1\pi\sum_{k=1}^N\int_{\Sigma_\ddd^{k, \varepsilon_k}}\ln|u_j(s)-z|\partial_{n_\Omega(z)}\mathbf e^{\bm \varepsilon}(z)d\sigma(z) - \sum_{k=1}^N\int_{\Sigma_\ddd^{k, \varepsilon_k}}R_{\partial\Omega}(u_j(s),z)\partial_{n_\Omega(z)}\mathbf e^{\bm \varepsilon}(z)d\sigma(z).
\end{split}
\ees
This yields, for all $s\in (-1,1)$:
\bes
\begin{split}
    \forall j\in\lb1,N\rb,\ \ 0 &= m_{\partial\Omega}(\mathbf e^{\bm \varepsilon}) - \int_\Omega G_\Omega(u_j(s),y)dy  \\
    &\quad- \frac1\pi\sum_{k=1}^N\int_{-1}^1\ln|u_j(s)-u_k(t)|\Phi_k^{\varepsilon}(t)dt - \sum_{k=1}^N\int_{-1}^1R_{\partial\Omega}(u_j(s),u_k(t))\Phi_k^{\varepsilon}(t)dt.
\end{split}
\ees
 On the other hand, the function 
$V(\cdot) = \int_\Omega G_\Omega(\cdot ,y)\,dy \in H^1(\Omega)$ is the unique weak solution to:
\bes 
        \Delta V(x) = 1 \text{ in }\Omega, \ 
        \partial_{n_\Omega(\zeta)} V(\zeta)  = \frac{|\Omega|}{|\partial\Omega|}  \text{ on }\partial\Omega, \ 
        \int_{\partial\Omega} V(\zeta) \, d\sigma(\zeta) = 0.
\ees
As a result, by the elliptic regularity  theory for Neumann problems,  $V$ is smooth over $\overline \Omega$. We then have 
$V(u_j(s))=V(x^{(j)})+ O(\varepsilon_j)$ uniformly in $s\in (-1,1)$. 
Hence, for all $s\in (-1,1)$:
\be\label{eq:Ko}
\begin{split}
    \forall j\in\lb1,N\rb,\ \ 0 &= m_{\partial\Omega}(\mathbf e^{\bm \varepsilon}) - V(x^{(j)}) + O(\varepsilon_j)\\
    &\quad- \frac1\pi\sum_{k=1}^N\int_{-1}^1\ln|u_j(s)-u_k(t)|\Phi_k^{\varepsilon}(t)dt - \sum_{k=1}^N\int_{-1}^1R_{\partial\Omega}(u_j(s),u_k(t))\Phi_k^{\varepsilon}(t)dt.
\end{split}
\ee
Our objective is now to solve this system for the unknowns $m_{\partial\Omega}(\mathbf e^{\bm \varepsilon})$ and $\Phi_1^{\bm \varepsilon},\ldots,\Phi_N^{\bm \varepsilon}$. Let $q>1$ be as in \eqref{eq:tangential}.  From \eqref{eq:tangential}, let $q\in (1,2)$ such that $\Phi_j^{\varepsilon}\in L^q((-1,1))$ for all $j\in\lb1,N\rb$. Recall that 
 that $\mathscr L_q$ is invertible (see Proposition \ref{prop:Lr}). 
 Therefore, using \eqref{eq:key1} and \eqref{eq:key2}, the system of equations \eqref{eq:Ko} writes, for all $j\in\lb1,N\rb$ and $s\in (-1,1)$:
\bes
\begin{split}
    0 &= m_{\partial\Omega}(\mathbf e^{\bm \varepsilon}) - V(x^{(j)}) + O(\varepsilon_j) - \frac1\pi\big [ \Lr_q + \varepsilon_j\Gr_q^{j}\big ](\Phi_j^{\varepsilon})(s) - \mathbf T^{\bm \varepsilon}_j\ln\varepsilon_j\\
    &\quad- \sum_{k=1,k\neq j}^N\Big[\, \frac{ |\bm \varepsilon|}{\pi}\Hr_q^{j,k}(\Phi_k^{\varepsilon})(s) + \mathbf T^{\bm \varepsilon}_k\ln|x^{(j)} - x^{(k)}|\, \Big]- \sum_{k=1}^N\int_{-1}^1R_{\partial\Omega}(u_j(s),u_k(t))\Phi_k^{\varepsilon}(t)dt.
\end{split}
\ees
Since the functions $R_{\partial\Omega}$, $u_j$, and $u_k$ are smooth, we have uniformly in $s\in (-1,1)$:
\bes
\begin{split}
    \int_{-1}^1R_{\partial\Omega}(u_j(s),u_k(t))\Phi_k^{\bm \varepsilon}(t)dt &= R_{\partial\Omega}(x^{(j)},x^{(k)})\int_{-1}^1 \Phi_k^{\bm \varepsilon}(t)dt + \mathscr C_{k,j}^{\bm \varepsilon}( \Phi_k)(s),
\end{split} 
\ees
where $\mathscr C_{k,j}^{\bm \varepsilon}: L^q((-1,1))\to A^q((-1,1))$ is a bounded linear operator satisfying, in the limit $|\bm \varepsilon|\to 0$,     $\Vert\mathscr C_{k,j}^{\bm \varepsilon} \Vert_{L^p((-1,1))\to A^p((-1,1))}   = O(|\bm \varepsilon|)$. Hence, using also Lemma \ref{le.LeG}, it follows that for all $j\in\lb1,N\rb$ and $s\in (-1,1)$:
\bes
\begin{split}
    &\frac1\pi\big [ \mathscr L_q + \varepsilon_j\mathscr G_q^{j}\big ](\Phi_j^{\bm \varepsilon})(s)  + \sum_{k=1,k\neq j}^N  \frac{ |\bm \varepsilon|}{\pi}\mathscr H_q^{j,k}(\Phi_k^{\bm \varepsilon})(s)  \\
    &\quad\quad= m_{\partial\Omega}(\mathbf e^{\bm \varepsilon}) - V(x^{(j)}) + O(\varepsilon_j) - \mathbf T^{\bm \varepsilon}_j\ln\varepsilon_j\\
    &\quad\quad\quad- \pi\, \mathbf T^{\bm \varepsilon}_j R_{\partial\Omega}(x^{(j)},x^{(j)}) - \pi\sum_{k=1,k\neq j}^N\mathbf T^{\bm \varepsilon}_kG_{\partial\Omega}(x^{(j)},x^{(k)})-  \sum_{k=1}^N \mathscr C_{k,j}^{\bm \varepsilon}(\Phi_k)(s).
\end{split}
\ees
In matrix-operator form, this writes:
\be\label{eq:system1}
\frac1\pi \Big[\mathbf L_q + \diag (\varepsilon_1\mathscr G_q^{1},\ldots,\varepsilon_j\mathscr G_q^{j}) + |\bm \varepsilon|\mathbf H^{\bm \varepsilon}_q + \pi\mathbf C^{\bm \varepsilon}_q \Big] \Phi^{\bm \varepsilon} = m_{\partial\Omega}(\mathbf e^{\bm \varepsilon})\mathbf U - \mathbf V^{\bm \varepsilon} - \mathbf Q_1^{\bm \varepsilon}\mathbf T^{\bm \varepsilon}
\ee
where 
\bes
\mathbf L_q =  \diag(\mathscr L_q ,\ldots, \mathscr L_q),\ (\mathbf H^{\bm \varepsilon}_q)_{j,k} = \indic_{j\neq k} \, \mathscr H_q^{j,k},\ (\mathbf C^{\bm \varepsilon}_q)_{j,k} = \mathscr C^{\bm \varepsilon}_{j,k},
\ees
\bes
\Phi^{\bm \varepsilon} = \begin{pmatrix}\Phi_1^{\bm \varepsilon}\\\vdots\\\Phi_N^{\bm \varepsilon}\end{pmatrix},\ \mathbf T^{\bm \varepsilon} = \begin{pmatrix}\mathbf T^{\bm \varepsilon}_1\\\vdots\\\mathbf T^{\bm \varepsilon}_N\end{pmatrix},\ \mathbf U = \begin{pmatrix}1\\\vdots\\1\end{pmatrix},\ \text{and}\  \mathbf V^{\bm \varepsilon} = \begin{pmatrix}V(x^{(1)}) + O(\varepsilon_1)\\\vdots\\V(x^{(N)}) + O(\varepsilon_N)\end{pmatrix},
\ees
and where $\mathbf Q_1^{\bm \varepsilon} = \mathbf D_{\bm \varepsilon} + \mathbf A - \ln(1/2)\, \mathbf I$, with
\bes
\mathbf D_{\bm \varepsilon}= \diag\big (\ln\frac{\varepsilon_1}{2} ,\ldots, \ln\frac{\varepsilon_N}{2}\big ),
\ees
and $\mathbf A = (A_{j,k})_{1\leq j,k\leq N}$ is the symmetric matrix (see Lemma \eqref{le.sym}) defined by 
\be\label{eq.ajk}
A_{j,k} = \pi G_{\partial\Omega}(x^{(j)},x^{(k)}) \quad \text{if } j \neq k, \quad \text{and} \quad A_{k,k} = \pi R_{\partial\Omega}(x^{(k)},x^{(k)}).
\ee
Denoting $\mathbf J^{\bm \varepsilon}_q = \diag (\varepsilon_1\mathscr G_q^{1},\ldots,\varepsilon_j\mathscr G_q^{j}) + |\bm \varepsilon|\mathbf H^{\bm \varepsilon}_q + \pi\mathbf C^{\bm \varepsilon}_q$, we write system \eqref{eq:system1} as
\bes
\frac1\pi \mathbf L_q\big  [\mathbf I +  \mathbf L_q^{-1}\mathbf J^{\bm \varepsilon}_q\big ] \Phi^{\bm \varepsilon} =m_{\partial\Omega}(\mathbf e^{\bm \varepsilon})\mathbf U - \mathbf V^{\bm \varepsilon} - \mathbf Q_1^{\bm \varepsilon}\mathbf T^{\bm \varepsilon}.
\ees
Here, $\mathbf L_q^{-1} = \diag(\mathscr L_q^{-1},\ldots, \mathscr L_q^{-1})$ is well defined thanks to Proposition \ref{prop:Lr}. Recall that in the limit $|\bm{\varepsilon}|\to 0$:
\bes
\vvert{\mathbf J^{\bm \varepsilon}_q}_{L^q((-1,1))^N\to A^q((-1,1))^N} = O(|\bm\varepsilon|),
\ees
and therefore $\mathbf I +  \mathbf L_q^{-1}\mathbf J^{\bm \varepsilon}_q$ is invertible (using a Neumann series) for $|\bm\varepsilon|\to0$. Let us denote $\mathbf R^{1,\bm \varepsilon} : L^q((-1,1))^N\to L^q((-1,1))^N$ such that $[\mathbf I + \mathbf L_q^{-1}\mathbf J^{\bm \varepsilon}_q]^{-1} = \mathbf I + \mathbf R^{1,\bm \varepsilon}$. Note that
\bes
\vvert{\mathbf R^{1,\bm \varepsilon}}_{L^q((-1,1))^N\to L^q((-1,1))^N} = O(|\bm\varepsilon|),
\ees
Set
\be\label{eq:aVect2}
\mathbf B_j^{\bm \varepsilon}= \frac{1}{\ln(1/2)} \big(m_{\partial\Omega}(\mathbf e^{\bm \varepsilon})\mathbf U - \mathbf V^{\bm \varepsilon} - \mathbf Q_1^{\bm \varepsilon}\mathbf T^{\bm \varepsilon}\big)_j.
\ee
Therefore, applying Proposition \ref{prop:Lr} and \eqref{eq.inv1} yields, for $s\in (-1,1)$:
\be\label{eq.phi1}
\Phi_j^{\bm \varepsilon}(s) = \frac{\mathbf B_j^{\bm \varepsilon}}{\sqrt{1-s^2}} + \sum_{k=1}^N(\mathbf R^{1,\bm \varepsilon})_{j,k}\Big(\frac{1}{\sqrt{1-\cdot ^2}}\Big)(s)\, \mathbf B_k^{\bm \varepsilon}.
\ee
where (recall $1<q<2$ and so $q/2<1$):
\bes
\big\Vert(\mathbf R^{1,\bm \varepsilon})_{j,k}\Big(\frac{1}{\sqrt{1-\cdot ^2}}\Big )\big \Vert_{L^q((-1,1))^N}\le C |\bm \varepsilon| \, \big \Vert \frac{1}{\sqrt{1-\cdot ^2}}\big \Vert _{L^q((-1,1))} \le C|\bm \varepsilon|.
\ees
The goal is thus to study the asymptotic behavior of $\mathbf B_j^{\bm \varepsilon}$ (and thus of the ones of $m_{\partial\Omega}(\mathbf e^{\bm \varepsilon})$ and $\mathbf T^{\bm \varepsilon}$, see \eqref{eq:aVect2}) when $|\bm \varepsilon|\to 0$.

We first focus on $m_{\partial\Omega}(\mathbf e^{\bm \varepsilon})$. 
Integrating \eqref{eq.phi1} over $(-1,1)$ yields the relation $
\mathbf{T}^{\bm{\varepsilon}} = \left( \mathbf{I} + \mathbf{R}^{2,\bm{\varepsilon}} \right) \mathbf{B}^{\bm{\varepsilon}}$, 
where, for $j,k\in\lb1,N\rb$,
\bes
\mathbf{R}_{j,k}^{2,\bm{\varepsilon}} = \frac1\pi\int_{-1}^1 \mathbf{R}^{1,\bm{\varepsilon}}_{j,k} \Big(\frac{1}{\sqrt{1-\cdot^2}}\Big)(s) \, ds  = O(|\bm{\varepsilon}|).
\ees
Here again, we use a Neumann series to invert $\mathbf{I} + \mathbf{R}^{2,\bm{\varepsilon}}$ over $\R^N$, leading to
\be\label{eq:recoverB1}
\mathbf{B}^{\bm{\varepsilon}} = \left( \mathbf{I} + \mathbf{R}^{3,\bm{\varepsilon}} \right) \mathbf{T}^{\bm{\varepsilon}},
\ee
for some $\mathbf{R}^{3,\bm{\varepsilon}}:\R^N\to\R^N$ with $\mathbf{R}^{3,\bm{\varepsilon}} = O(|\bm{\varepsilon}|)$. By  definition of $\mathbf{B}^{\bm{\varepsilon}}$ (see \eqref{eq:aVect2}), we then have:
\bes
\frac{1}{\ln(1/2)} \big(  m_{\partial\Omega}(\mathbf{e}^{\bm{\varepsilon}})\mathbf{U} - \mathbf{V}^{\bm{\varepsilon}} - \mathbf{Q}_1^{\bm{\varepsilon}}\mathbf{T}^{\bm{\varepsilon}} \big) = (\mathbf{I} + \mathbf R^{3,\bm \varepsilon}) \mathbf{T}^{\bm{\varepsilon}}.
\ees
Hence, recalling that $\mathbf{Q}_1^{\bm{\varepsilon}} = \mathbf{D}_{\bm{\varepsilon}} + \mathbf{A} - \ln(1/2)\mathbf{I}$, this leads to
\bes
\begin{split}
    m_{\partial\Omega}(\mathbf{e}^{\bm{\varepsilon}})\mathbf{U} - \mathbf{V}^{\bm{\varepsilon}} &= \Big ( \big[ \mathbf{D}_{\bm{\varepsilon}} + \mathbf{A} - \ln(1/2)\mathbf{I} \big] + \ln(1/2)\mathbf{I} + \ln(1/2) \mathbf R^{3,\bm \varepsilon}\Big ) \mathbf{T}^{\bm{\varepsilon}}\\
    &= \Big ( \mathbf{D}_{\bm{\varepsilon}} + \mathbf{A}   + \ln(1/2) \mathbf R^{3,\bm \varepsilon}\Big ) \mathbf{T}^{\bm{\varepsilon}}.
\end{split}
\ees
Recall that $\mathbf{D}_{\bm{\varepsilon}} = \diag\left(\ln \frac{\varepsilon_1}{2}, \dots, \ln \frac{\varepsilon_N}{2}\right)$. Since $\mathbf{D}_{\bm{\varepsilon}}$ is a diagonal matrix of large negative logs, it is invertible for $|\bm{\varepsilon}|$ small enough. For every small $|\bm{\varepsilon}|>0$, the matrix $\mathbf{I} + \mathbf{D}_{\bm{\varepsilon}}^{-1}\mathbf{A} + \ln(1/2) \mathbf{D}_{\bm{\varepsilon}}^{-1}  \mathbf R^{3,\bm \varepsilon}$ can be inverted via a Neumann series again, since $\mathbf{D}_{\bm{\varepsilon}}^{-1} = -\diag(\mathbf{K}^{\bm{\varepsilon}}) \to 0$. This yields:
\be\label{eq:recoverB2}
\mathbf{T}^{\bm{\varepsilon}} = \left( \mathbf{I} - \mathbf{D}_{\bm{\varepsilon}}^{-1}\mathbf{A} +  \mathbf R^{4,\bm \varepsilon} \right) \mathbf{D}_{\bm{\varepsilon}}^{-1} \left( m_{\partial\Omega}(\mathbf{e}^{\bm{\varepsilon}})\mathbf{U} - \mathbf{V}^{\bm{\varepsilon}} \right),
\ee
where $
\mathbf R^{4,\bm \varepsilon} = O(1/ |\ln|\bm{\varepsilon}||^2) = O(|\mathbf{K}^{\bm{\varepsilon}}|^2)$. 
Since
\begin{equation}\label{eq:matrix_components}
\begin{split}
(\mathbf{D}_{\bm{\varepsilon}}^{-1}\mathbf{U})_j &= -K_j^{\varepsilon_j}, 
\ 
( \mathbf{D}_{\bm{\varepsilon}}^{-1}\mathbf{V}^{\bm{\varepsilon}} )_j = -K_j^{\varepsilon_j} V(x^{(j)}) + O(\varepsilon_j K_j^{\varepsilon_j}), \\
(\mathbf{D}_{\bm{\varepsilon}}^{-1}\mathbf{A} \mathbf{D}_{\bm{\varepsilon}}^{-1} \mathbf{U} )_j &= K_j^{\varepsilon_j} \sum_{k=1}^N A_{j,k} K_k^{\varepsilon_k}, 
\ 
(\mathbf{D}_{\bm{\varepsilon}}^{-1}\mathbf{A} \mathbf{D}_{\bm{\varepsilon}}^{-1} \mathbf{V}^{\bm{\varepsilon}} )_j = K_j^{\varepsilon_j} \sum_{k=1}^N A_{j,k} K_k^{\varepsilon_k} V(x^{(k)}) + O(|\mathbf{K}^{\bm{\varepsilon}}|^3),
\end{split}
\end{equation}
we have:
\bes
\mathbf{T}_j^{\bm{\varepsilon}} = m_{\partial\Omega}(\mathbf{e}^{\bm{\varepsilon}}) \Big[ -K_j^{\varepsilon_j} - K_j^{\varepsilon_j}\sum_{k=1}^N A_{j,k}K_k^{\varepsilon_k} + O(|\mathbf{K}^{\bm{\varepsilon}}|^3) \Big] + V(x^{(j)})K_j^{\varepsilon_j} + O(|\mathbf{K}^{\bm{\varepsilon}}|^2).
\ees
Next, we invoke the flux condition $\mathbf{T}^{\bm{\varepsilon}}\cdot \mathbf{U} = -\frac{|\Omega|}{\pi}$ (see \eqref{eq:sumtracee}) to get in the limit $|\bm{\varepsilon}|\to 0$:
\bes
-\frac{|\Omega|}{\pi} = -m_{\partial\Omega}(\mathbf{e}^{\bm{\varepsilon}}) \Big[\sum_{j=1}^N(K_j^{\varepsilon_j} + K_j^{\varepsilon_j}\sum_{k=1}^N A_{j,k}K_k^{\varepsilon_k}) + O(|\mathbf{K}^{\bm{\varepsilon}}|^3) \Big] + \sum_{j=1}^NV(x^{(j)})K_j^{\varepsilon_j} + O(|\mathbf{K}^{\bm{\varepsilon}}|^2),
\ees
i.e.
\bes
m_{\partial\Omega}(\mathbf{e}^{\bm{\varepsilon}}) = \frac{\frac{|\Omega|}{\pi} + \sum_{j=1}^NV(x^{(j)})K_j^{\varepsilon_j} + O(|\mathbf{K}^{\bm{\varepsilon}}|^2)}{\sum_{j=1}^N(K_j^{\varepsilon_j} + K_j^{\varepsilon_j}\sum_{k=1}^N A_{j,k}K_k^{\varepsilon_k}) + O(|\mathbf{K}^{\bm{\varepsilon}}|^3)}.
\ees
Note that this implies that $m_{\partial\Omega}(\mathbf{e}^{\bm{\varepsilon}}) = O(|\mathbf{K}^{\bm{\varepsilon}}|^{-1})$. Setting $S^{\bm{\varepsilon}} = \sum_{j=1}^NK_j^{\varepsilon_j}$, it holds that:
\bes
\begin{split}
    m_{\partial\Omega}(\mathbf{e}^{\bm{\varepsilon}}) &= \frac{1}{S_{\bm{\varepsilon}}} \left[ \frac{|\Omega|}{\pi} + \sum_{j=1}^NV(x^{(j)})K_j^{\varepsilon_j} + O(|\mathbf{K}^{\bm{\varepsilon}}|^2) \right] \cdot \left[ 1 + \frac{1}{S_{\bm{\varepsilon}}} \sum_{j=1}^N K_j^{\varepsilon_j}\sum_{k=1}^N A_{j,k}K_k^{\varepsilon_k} + O(|\mathbf{K}^{\bm{\varepsilon}}|^3) \right]^{-1}.
\end{split}
\ees
Expanding the fraction we have when $|\bm{\varepsilon}|\to 0$:
\be\label{eq:piece}
m_{\partial\Omega}(\mathbf{e}^{\bm{\varepsilon}}) = \frac{|\Omega|}{\pi \sum_{j=1}^NK_j^{\varepsilon_j}} + \frac{\sum_{j=1}^N V(x^{(j)})K_j^{\varepsilon_j}}{\sum_{j=1}^NK_j^{\varepsilon_j}} - \frac{\frac{|\Omega|}{\pi} \sum_{j=1}^N K_j^{\varepsilon_j}\sum_{k=1}^N A_{j,k}K_k^{\varepsilon_k}}{(\sum_{j=1}^NK_j^{\varepsilon_j})^2} + O(|\mathbf{K}^{\bm{\varepsilon}}|).
\ee

We can now come back to the study of
the asymptotic behavior of $\mathbf B_j^{\bm \varepsilon}$ (see \eqref{eq:aVect2}) when $|\bm \varepsilon|\to 0$.
  We  plug  back \eqref{eq:recoverB2} into \eqref{eq:recoverB1} (recall that $\mathbf{R}^{3,\bm{\varepsilon}} = O(|\bm{\varepsilon}|)$, $\mathbf R^{4,\bm \varepsilon} =   O(|\mathbf{K}^{\bm{\varepsilon}}|^2)$, and  $\mathbf{D}_{\bm{\varepsilon}}^{-1} = O(|\mathbf K^{\bm \varepsilon}|^{-1})$), and we deduce that:
  \bes
\mathbf{B}^{\bm{\varepsilon}} =  \mathbf{T}^{\bm{\varepsilon}}+ \mathbf{R}^{3,\bm{\varepsilon}} \mathbf{T}^{\bm{\varepsilon}} =  \left( \mathbf{I} - \mathbf{D}_{\bm{\varepsilon}}^{-1}\mathbf{A} +  \mathbf R^{4,\bm \varepsilon} \right) \mathbf{D}_{\bm{\varepsilon}}^{-1} \left( m_{\partial\Omega}(\mathbf{e}^{\bm{\varepsilon}})\mathbf{U} - \mathbf{V}^{\bm{\varepsilon}} \right) + O(|\bm{\varepsilon}|^{1-\delta})
\ees
for any $\delta>0$. This yields in the limit $|\bm{\varepsilon}|\to 0$:
\be\label{eq:aVect3}
\mathbf{B}^{\bm{\varepsilon}} = - \frac{|\Omega|}{\pi\sum_{k=1}^NK_k^{\varepsilon_k}}\Big(\mathbf K^{\bm \varepsilon} + O(|\mathbf K^{\bm \varepsilon}|^2)\Big).
\ee
Note in articular  that $\mathbf B^{\varepsilon}$ is bounded (i.e. $\mathbf B^{\varepsilon}=O(1)$). Therefore, by \eqref{eq.phi1}, we have: 
\be\label{eq:PhiVect2}
\forall s\in(-1,1),\ \Phi^{\bm \varepsilon}(s) = \frac{\mathbf B^{\varepsilon}}{\sqrt{1-s^2}} + O_{L^q((-1,1))}( |\bm \varepsilon|).
\ee
We are now in position to derive an asymptotic expansion of  $\mathbf e^{\bm \varepsilon}$. Let $x\in\overline\Omega\setminus\{x^{(1)},\ldots,x^{(N)}\}$, using Corollary \ref{cor:IPP}, and the expression for $G_{\partial\Omega}$ (see \eqref{eq:Greens1}), we have
\bes
\begin{split}
    \mathbf e^{\bm \varepsilon}(x) &= \frac{1}{|\partial\Omega|}\int_{\partial\Omega}\mathbf e^{\bm \varepsilon}(z)d\sigma(z) - V(x) - \int_{\partial\Omega}G_{\partial\Omega}(x,z)\partial_{n_\Omega(z)}\mathbf e^{\bm \varepsilon}(z)d\sigma(z)\\
    &= m_{\partial\Omega}(\mathbf e^{\bm \varepsilon}) - V(x) - \frac1\pi\sum_{j=1}^N\int_{\Sigma_\ddd^{j, \varepsilon_j}}\ln|x-z|\partial_{n_\Omega(z)}\mathbf e^{\bm \varepsilon}(z)d\sigma(z) - \sum_{j=1}^N\int_{\Sigma_\ddd^{j, \varepsilon_j}}R_{\partial\Omega}(x,z)\partial_{n_\Omega(z)}\mathbf e^{\bm \varepsilon}(z)d\sigma(z).
\end{split}
\ees
Then, a change of variable and \eqref{eq:PhiVect2} imply that when $|\bm{\varepsilon}|\to 0$: 
\bes
\begin{split}
    \mathbf e^{\bm \varepsilon}(x) &= m_{\partial\Omega}(\mathbf e^{\bm \varepsilon}) - V(x) - \frac1\pi\sum_{j=1}^N\int_{-1}^1\ln|x-u_j(t)|\Phi_j^{\bm \varepsilon}(t)dt - \sum_{j=1}^N\int_{-1}^1R_{\partial\Omega}(x,u_j(t))\Phi_j^{\bm \varepsilon}(t)dt\\
    &= m_{\partial\Omega}(\mathbf e^{\bm \varepsilon}) - V(x) - \frac1\pi\sum_{j=1}^N\ln|x-x^{(j)}|\int_{-1}^1\frac{\mathbf B^{\bm \varepsilon}_j}{\sqrt{1-t^2}}dt\\
    &\quad- \sum_{j=1}^NR_{\partial\Omega}(x,x^{(j)})\int_{-1}^1\frac{\mathbf B^{\bm \varepsilon}_j}{\sqrt{1-t^2}}dt + r^{\bm \varepsilon}(x)  + O( |\bm \varepsilon|)\\
    &= m_{\partial\Omega}(\mathbf e^{\bm \varepsilon}) - V(x) - \pi\sum_{j=1}^N\mathbf B^{\bm \varepsilon}_j\, G_{\partial\Omega}(x,x^{(j)}) + r^{\bm \varepsilon}(x) + O( |\bm \varepsilon|),
\end{split}
\ees
where
$$r^{\bm{\varepsilon}}(x) = -\frac{1}{\pi} \sum_{j=1}^N \int_{-1}^1 \Big[\ln\left( \frac{|x - u_j(t)|}{|x - x^{(j)}|} \right) \Phi_j^{\bm{\varepsilon}}(t) \, dt -  \int_{-1}^1 \left[ R_{\partial\Omega}(x, u_j(t)) - R_{\partial\Omega}(x, x^{(j)}) \right] \Phi_j^{\bm{\varepsilon}}(t) \,  dt\Big].$$
Since
 $\Phi_j^{\bm{\varepsilon}}(t) = \frac{\mathbf B_j^{\bm{\varepsilon}}}{\sqrt{1-t^2}} + \mathcal{R}_j^{\bm{\varepsilon}}(t)$ where  $\mathcal{R}_j^{\bm{\varepsilon}}(t) = O_{L^q((-1,1))}(|\bm{\varepsilon}|)$, we have $r^{\bm{\varepsilon}}(x) = r_0^{\bm{\varepsilon}}(x) + \rho^{\bm{\varepsilon}}(x)$
 where
 $$r_0^{\bm{\varepsilon}}(x) = -\frac{1}{\pi} \sum_{j=1}^N \mathbf B_j^{\bm{\varepsilon}} \Big[\int_{-1}^1 \ln\left( \frac{|x - u_j(t)|}{|x - x^{(j)}|} \right) \frac{1}{\sqrt{1-t^2}} \, dt -  \int_{-1}^1 \frac{R_{\partial\Omega}(x, u_j(t)) - R_{\partial\Omega}(x, x^{(j)})}{\sqrt{1-t^2}} \, dt \Big].$$
and 
$$\rho^{\bm{\varepsilon}}(x) = -\frac{1}{\pi} \sum_{j=1}^N\Big[ \int_{-1}^1 \ln\left( \frac{|x - u_j(t)|}{|x - x^{(j)}|} \right) \mathcal{R}_j^{\bm{\varepsilon}}(t) \, dt -  \int_{-1}^1 \left[ R_{\partial\Omega}(x, u_j(t)) - R_{\partial\Omega}(x, x^{(j)}) \right] \mathcal{R}_j^{\bm{\varepsilon}}(t) \, dt\Big].$$
Since $\mathbf B^{\varepsilon}=O(1)$ and $u_j(t) - x^{(j)} = O(\varepsilon_j)$  (uniformly in $t\in (-1,1)$),  we have for any $\delta>0$ (recall \eqref{eq.Oe}), 
$$r_0^{\bm{\varepsilon}}(x) = \sum_{j=1}^N \frac{O(\varepsilon_j)}{|x - x^{(j)}|} = O_{L^\infty(\Omega_{\bm{\varepsilon}})}(|\bm{\varepsilon}|^{1-\delta})\text{ and } \rho^{\bm{\varepsilon}}(x) = O_{L^\infty(\Omega_{\bm{\varepsilon}})}(|\bm{\varepsilon}|^{2-\delta}).$$ 
This leads to
\be\label{eq:eapproxinteressant}
\mathbf e^{\bm \varepsilon}(x) = m_{\partial\Omega}(\mathbf e^{\bm \varepsilon}) - V(x) - \pi\sum_{j=1}^N\mathbf B^{\bm \varepsilon}_j\, G_{\partial\Omega}(x,x^{(j)}) + O_{L^\infty(\Omega_{\bm \varepsilon})}( |\bm \varepsilon|^{1-\delta}).
\ee
Substituting back \eqref{eq:piece} and \eqref{eq:aVect3} into \eqref{eq:eapproxinteressant}, we then obtain for all $\delta>0$,
\bes
\mathbf e^{\bm \varepsilon}(x) = \frac{|\Omega|}{\pi\sum_{k=1}^NK_k^{\varepsilon_k}}\Big[1 + \sum_{k=1}^NK_k^{\varepsilon_k}\big (\ln|x-x^{(k)}| + h_k(x)\big )\Big] + O_{L^\infty(\Omega_{\bm \varepsilon})}(|\mathbf K^{\bm \varepsilon}|^{1-\delta}),
\ees
where for all $k\in\lb1,N\rb$, $h_k$ is given by (recall that $V(x^{k})-V(x)=\int_\Omega \big (G_\Omega(x^{(k)},y)-G_\Omega(x,y)\big )dy$):
\bes
h_k(x) = \frac{\pi}{|\Omega|}\int_\Omega \big (G_\Omega(x^{(k)},y)-G_\Omega(x,y)\big )dy + \pi R_{\partial\Omega}(x,x^{(k)}) - \frac{1}{\sum_{j=1}^NK_j^{\varepsilon_j}}\sum_{j=1}^NK_j^{\varepsilon_j} A_{j,k}.
\ees
This exactly the formula given in Theorems \ref{thm:1}  if we show that  for all $x\in\Omega_{\bm \varepsilon}$, 
\be\label{eq.cp}
\sum_{k=1}^NK_k^{\varepsilon_k}h_k(x) = \sum_{k=1}^NK_k^{\varepsilon_k}g_k^{\bm \varepsilon}(x),
\ee
 where $g_k^{\bm \varepsilon}$ is defined in \eqref{eq:espThomas}. First,   thanks to \eqref{eq.RpaOmega} and \eqref{eq:Greens1}, $h_k$ solves the system
\bes
\left\{
\begin{aligned}
    \Delta h_k^{\bm \varepsilon} &=  -\frac{\pi}{|\Omega|}
    &&\text{in }\Omega,\\
    \partial_{n(\zeta)} h_k^{\bm \varepsilon}(\zeta) &= -\partial_{n(\zeta)}\, \ln|\zeta-x^{(k)}| &&\text{on }\partial\Omega.
\end{aligned}
\right.
\ees
Consequently, by \eqref{eq:Rok}, $h_k^{\bm \varepsilon}  = R_k^0 + h_k^{\bm \varepsilon}(x^{(k)})$. We then deduce that for every $x\in\overline\Omega$:
\bes
\begin{split}
    h_k^{\bm \varepsilon}(x) &= R_k^0(x) + \pi R_{\partial\Omega}(x^{(k)},x^{(k)}) - \frac{1}{\sum_{j=1}^NK_j^{\varepsilon_j}}\sum_{j=1}^NK_j^{\varepsilon_j} A_{j,k}\\
    &=   R_k^0(x) + \pi R_{\partial\Omega}(x^{(k)},x^{(k)}) - \frac{K_k^{\varepsilon_k} \pi R_{\partial\Omega}(x^{(k)},x^{(k)}) + \sum_{j=1,j \neq k}^N K_j^{\varepsilon_j} \pi G_{\partial\Omega}(x^{(j)},x^{(k)})}{\sum_{j=1}^NK_j^{\varepsilon_j}} \\
    &=  R_k^0(x) + \frac{(\sum_{j=1,j \neq k}^N K_j^{\varepsilon_j}) \pi R_{\partial\Omega}(x^{(k)},x^{(k)}) - \sum_{j=1,j \neq k}^N K_j^{\varepsilon_j} \pi G_{\partial\Omega}(x^{(j)},x^{(k)})}{\sum_{j=1}^NK_j^{\varepsilon_j}}\\
    &=   R_k^0(x) - \frac{1}{\sum_{j=1}^N K_j^{\varepsilon_j}} \sum_{j=1,j \neq k}^N K_j^{\varepsilon_j} \left[ \pi G_{\partial\Omega}(x^{(j)},x^{(k)}) - \pi R_{\partial\Omega}(x^{(k)},x^{(k)}) \right].
\end{split}
\ees  
Recall \eqref{eq:espThomas} writes (see \eqref{eq:fk0} and  \eqref{eq:Rok}),
$g_k^{\bm \varepsilon}(x) = R_k^0(x) - \big(\sum_{j=1,j\neq k}^N K_j^{\varepsilon_j}  f_j^0(x^{(k)}) \big) / \sum_{j=1}^NK_j^{\varepsilon_j}$. 
Thus,   the function $g_k^{\bm \varepsilon} - h_k^{\bm \varepsilon}$ (which is constant over $\overline \Omega$) is equal to 
\bes\frac{1}{\sum_{j=1}^N K_j^{\varepsilon_j}} \sum_{j=1,j \neq k}^N K_j^{\varepsilon_j} \Big[\underbrace{\pi G_{\partial\Omega}(x^{(j)},x^{(k)}) - \pi R_{\partial\Omega}(x^{(k)},x^{(k)}) - f_j^0(x^{(k)})}_{  \pi R_{\partial\Omega}(x^{(j)},x^{(k)}) - \pi R_{\partial\Omega}(x^{(k)},x^{(k)}) - R_j^0(x^{(k)})} \Big].
\ees
Therefore, the equality \eqref{eq.cp} is equivalent to
\bes
\sum_{j,k=1, j\neq k}^NK_k^{\varepsilon_k} K_j^{\varepsilon_j} \Big[R_j^0(x^{(k)})  - \pi R_{\partial\Omega}(x^{(j)},x^{(k)}) + \pi R_{\partial\Omega}(x^{(k)},x^{(k)})\Big]=0,
\ees
that we rewrite  
\be\label{eq.eg-c0}
0=\sum_{j,k=1, j\neq k}^N K_k^{\varepsilon_k} K_j^{\varepsilon_j} m_{j,k} = \sum_{j<k}^N K_k^{\varepsilon_k} K_j^{\varepsilon_j}  ( m_{j,k} + m_{k,j}  ),
\ee
 with $m_{j,k} = R_j^0(x^{(k)}) - \pi R_{\partial\Omega}(x^{(j)},x^{(k)}) + \pi R_{\partial\Omega}(x^{(k)},x^{(k)})$. 
The identity \eqref{eq.eg-c0} is satisfied if we prove that for all $1\le j<k\le N$, $m_{j,k} + m_{k,j}=0$ i.e.   (recall $R_{\partial\Omega}$ is symmetric over $\partial \Omega \times \partial \Omega$):
\be\label{eq.eg-c}
  R_j^0(x^{(k)}) + R_k^0(x^{(j)})  -2 \pi R_{\partial\Omega}(x^{(j)},x^{(k)}) + \pi R_{\partial\Omega}(x^{(k)},x^{(k)}) + \pi R_{\partial\Omega}(x^{(j)},x^{(j)})=0. 
\ee
To prove the claim \eqref{eq.eg-c}, we introduce   for 
$i,j,k\in\lb1,N\rb$,  the following  function $h_{i,j,k}$ defined over  $\overline\Omega$ by:
\bes
\begin{split}
    h_{i,j,k}(x) &= R_i^0(x^{(k)}) - R_j^0(x^{(k)}) + R_j^0(x) - R_i^0(x)\\
    &\quad- \pi R_{\partial\Omega}(x^{(i)},x^{(k)}) + \pi R_{\partial\Omega}(x^{(j)},x^{(k)}) - \pi R_{\partial\Omega}(x,x^{(j)}) + \pi R_{\partial\Omega}(x,x^{(i)}).
\end{split}
\ees  
We then observe that  $h_{k,j,k}(x) + h_{j,k,j}(x) = - R_j^0(x^{(k)}) - R_k^0(x^{(j)})  + 2\pi R_{\partial\Omega}(x^{(j)},x^{(k)}) - \pi R_{\partial\Omega}(x^{(k)},x^{(k)}) - \pi R_{\partial\Omega}(x^{(j)},x^{(j)})$, and thus that the identity \eqref{eq.eg-c} holds if  
\bes
h_{k,j,k}(x) + h_{j,k,j}(x) = 0\quad \text{ for some   }x\in \overline \Omega.
\ees
We now prove that for all $i,j,k\in\lb1,N\rb$, the function $h_{i,j,k}$ vanishes over $\overline \Omega$. 
Using \eqref{eq:Rok} and \eqref{eq:RGreens}, we have
\bes
\forall x\in \Omega, \ \Delta h_{i,j,k}(x) = 0,\ \text{ and } \forall z\in \partial \Omega, \  \partial_{n(z)}h_{i,j,k}(z) = 0.
\ees
Thus, the function $h_{i,j,k}$ is constant over $\overline\Omega$. Since $h_{i,j,k}(x^{(k)}) = 0$ it follows that  $h_{i,j,k}\equiv0$ on $\overline\Omega$.   
This achieves the proofs of \eqref{eq.eg-c} and then of \eqref{eq.cp}. 
 The proof of Theorem \ref{thm:1} is complete.


\subsubsection{Study of the escape probability distribution} Let $i\in\lb1,N\rb$ be fixed throughout this section (the index of the escape window through which the escape probability is evaluated). Recall that $\mathbf p_i^{\bm \varepsilon}$ is the solution to \eqref{eq:E1}. 
Then, by Corollary \ref{cor:IPP}, we have for all $x\in \overline \Omega$, 
$$\mathbf p_i^{\bm \varepsilon}(x) = m_{\partial\Omega}(\mathbf p^{\bm \varepsilon}_i)   - \sum_{j=1}^N\int_{\Sigma_\ddd^{j, \varepsilon_j}}  G_{\partial\Omega}(x,z)\partial_{n_\Omega(z)}\mathbf p_i^{\bm \varepsilon}(z)d\sigma(z),$$
where we recall that $ m_{\partial\Omega}(\mathbf p^{\bm \varepsilon}_i) =\frac{1}{|\partial\Omega|}\int_{\partial\Omega}\mathbf p_i^{\bm \varepsilon} $.  Recall also that 
$$\int_{\partial\Omega}\partial_{n_\Omega(z)}\mathbf{p}_i^{\bm{\varepsilon}}(z)d\sigma(z) = \int_\Omega\Delta\mathbf{p}_i^{\bm{\varepsilon}}(x)dx = 0.$$
We introduce for $t\in[-1,1]$, $
\hat\Phi^{\bm \varepsilon}_j(t) = \varepsilon_j'\partial_{n_\Omega(u_j(t))}\mathbf p_i^{\bm \varepsilon}(u_j(t))$ and $\hat{\mathbf T}^{\bm{\varepsilon}}_j =  \frac1\pi\int_{-1}^1\hat \Phi_j^{\bm \varepsilon}$. 
We also define the vectors 
$ \hat \Phi^{\bm \varepsilon} = (\hat \Phi_j^{\bm \varepsilon})_{j=1,\ldots,N}$ and $\hat{\mathbf T} = (\hat{\mathbf T}^{\bm \varepsilon}_j)_{j=1,\ldots,N}$.   
With theses notations $\sum_{j=1}^N \hat{\mathbf{T}}^{\bm{\varepsilon}}_j = 0$. 
In addition using \eqref{eq:Greens2}, for all $x\in \overline \Omega$,
\be\label{eq.pii}
\begin{split}
\mathbf{p}_i^{\bm{\varepsilon}}(x) &=  m_{\partial\Omega}(\mathbf p^{\bm \varepsilon}_i) - \frac1\pi\sum_{j=1}^N\int_{-1}^1\ln|x-u_j(t)|\hat\Phi_j^{\bm \varepsilon}(t)dt - \sum_{j=1}^N\int_{-1}^1R_{\partial\Omega}(x,u_j(t))\hat\Phi_j^{\bm \varepsilon}(t)dt.
\end{split}
\ee
We now use the boundary conditions in \eqref{eq:E1} to derive a system to be inverted.  Let $s \in (-1,1)$.   We have
 $\mathbf{p}_i^{\bm{\varepsilon}}(u_j(s)) = \delta_{i,j}$ and thus, 
 $$ \delta_{i,j} = m_{\partial\Omega}(\mathbf{p}^{\bm{\varepsilon}}_i) - \frac1\pi\sum_{k=1}^N\int_{-1}^1\ln|u_j(s)-u_k(t)|\hat{\Phi}_k^{\bm{\varepsilon}}(t)dt - \sum_{k=1}^N\int_{-1}^1R_{\partial\Omega}(u_j(s),u_k(t))\hat{\Phi}_k^{\bm{\varepsilon}}(t)dt.$$ 
Let $q\in (1,2)$ such that (see \eqref{eq:tangential})  that $\hat \Phi_j^{\varepsilon}\in L^q((-1,1))$ for all $j\in\lb1,N\rb$. Using \eqref{eq:key1} and \eqref{eq:key2}, the previous  system  of equations  writes, for all $j\in\lb1,N\rb$ and $s\in (-1,1)$:
$$\begin{aligned}
 0&=- \frac1\pi\big [ \mathscr L_q + \varepsilon_j\mathscr G_q^{j}\big ](\hat{\Phi}_j^{\bm{\varepsilon}})(s) - \sum_{k=1,k\neq j}^N \frac{ |\bm{\varepsilon}|}{\pi}\mathscr H_q^{j,k}(\hat{\Phi}_k^{\bm{\varepsilon}})(s)  
+  m_{\partial\Omega}(\mathbf{p}^{\bm{\varepsilon}}_i) - \delta_{i,j} - \hat{\mathbf{T}}^{\bm{\varepsilon}}_j\ln\varepsilon_j  \\
&\quad - \pi\, \hat{\mathbf{T}}^{\bm{\varepsilon}}_j R_{\partial\Omega}(x^{(j)},x^{(j)})   - \pi\sum_{k=1,k\neq j}^N\hat{\mathbf{T}}^{\bm{\varepsilon}}_kG_{\partial\Omega}(x^{(j)},x^{(k)})   + \sum_{k=1}^N \mathscr C_{k,j}^{\bm{\varepsilon}}(\hat{\Phi}_k)(s).
\end{aligned}$$
In matrix-operator formulation, this writes:
 $$\frac1\pi \Big[\mathbf L_q + \diag (\varepsilon_1\mathscr G_q^{1},\dots,\varepsilon_N\mathscr G_q^{N}) + |\bm{\varepsilon}|\mathbf H^{\bm{\varepsilon}}_q + \pi\mathbf C^{\bm{\varepsilon}}_q \Big] \hat{\Phi}^{\bm{\varepsilon}} = m_{\partial\Omega}(\mathbf{p}^{\bm{\varepsilon}}_i)\mathbf U - \mathbf{E}^{(i)} - \mathbf Q_1^{\bm{\varepsilon}}\hat{\mathbf{T}}^{\bm{\varepsilon}},$$
  where $\mathbf L_q$, $\mathbf{H}^{\bm{\varepsilon}}_q$, $\mathbf{C}^{\bm{\varepsilon}}_q$, $\mathbf{Q}_1^{\bm{\varepsilon}}$, and $\mathbf{U}$ maintain their  definitions, and $\mathbf{E}^{(i)}_j= \delta_{i,j}$.  
  
  Our next goal is to invert the previous system to deduce the asymptotic behavior of $m_{\partial\Omega}(\mathbf{p}^{\bm{\varepsilon}}_i)$ and $\hat{\mathbf{T}}^{\bm{\varepsilon}}$, which is required to establish the asymptotic expansion of $\mathbf{p}_i^{\bm{\varepsilon}}$ using the extra condition that:
  $$\<\hat{\mathbf{T}}^{\bm \varepsilon},\mathbf U\>   = 0.$$
  Note that this problem is indeed very similar to the system \eqref{eq:system1} we solved in  Section \ref{eq.Sec:thm1-} if, roughly speaking, one replaces the vector $\mathbf{V}^{\bm{\varepsilon}}$ by the standard basis vector $\mathbf{E}^{(i)}$, and accounts for the fact that the  constraint   is now $0$ and not $- {|\Omega|}/{\pi}$. Therefore, reasoning as we did in the next section when analyzing $\Phi^{\bm{\varepsilon}}$, we  obtain:
\be\label{eq:PsiVect}
\hat\Phi^{\bm \varepsilon}_j(s) = \frac{\hat{\mathbf B}^{\bm \varepsilon}_j}{\sqrt{1-s^2}} + \sum_{k=1}^N( {\mathbf R}^{1,\bm \varepsilon})_{j,k}\Big(\frac{1}{\sqrt{1-\cdot ^2}}\Big)(s)\hat{\mathbf B}^{\bm \varepsilon}_k,
\ee
where  
\be\label{eq:bhatj}
\hat{\mathbf B}^{\bm \varepsilon}_j = \frac{1}{\ln(1/2)} \big(m_{\partial\Omega}(\mathbf p^{\bm \varepsilon}_i)\mathbf U - \mathbf E^{(i)}- \mathbf Q_1^{\bm \varepsilon}\hat{\mathbf T}^{\bm \varepsilon}\big)_j.
\ee
Following the computations leading to  \eqref{eq:recoverB2}, we deduce that
$$\hat{\mathbf{T}}^{\bm{\varepsilon}} = \left( \mathbf{I} - \mathbf{D}_{\bm{\varepsilon}}^{-1}\mathbf{A} + \mathbf{R}^{4,\bm{\varepsilon}} \right)\mathbf{D}_{\bm{\varepsilon}}^{-1}\left( m_{\partial\Omega}(\mathbf{p}^{\bm{\varepsilon}}_i)\mathbf{U} - \mathbf{E}^{(i)} \right).$$
from which we get  using also \eqref{eq:matrix_components} (changing there ${\mathbf{V}}^{\bm{\varepsilon}}$   into $\mathbf{E}^{(i)}$), as $|\bm{\varepsilon}|\to 0$:
\be\label{eq:TT}
\hat{\mathbf{T}}_j^{\bm{\varepsilon}} = -m_{\partial\Omega}(\mathbf{p}^{\bm{\varepsilon}}_i)K_j^{\varepsilon_j} + \delta_{i,j}K_i^{\varepsilon_i} - K_j^{\varepsilon_j}\sum_{k=1}^N A_{j,k}\left( -m_{\partial\Omega}(\mathbf{p}^{\bm{\varepsilon}}_i)K_k^{\varepsilon_k} + \delta_{i,k}K_i^{\varepsilon_i} \right) + O(|\mathbf{K}^{\bm{\varepsilon}}|^3).
\ee
Using $\<\hat{\mathbf{T}}^{\bm \varepsilon},\mathbf U\>   = 0$, it holds that:
\be\label{eq:piece2}
m_{\partial\Omega}(\mathbf{p}^{\bm{\varepsilon}}_i) = \frac{K_i^{\varepsilon_i}\big[1 + \sum_{k=1}^N A_{i,k}K_k^{\varepsilon_k} + O(|\mathbf{K}^{\bm{\varepsilon}}|^2)\big]}{\sum_{j=1}^NK_j^{\varepsilon_j} + \sum_{j,k=1}^N K_j^{\varepsilon_j}K_k^{\varepsilon_k}A_{j,k} + O(|\mathbf{K}^{\bm{\varepsilon}}|^3)}.
\ee
Hence (recall that $S^{\bm{\varepsilon}} = \sum_{j=1}^NK_j^{\varepsilon_j}$), we have as $|\bm{\varepsilon}|\to 0$:
\be\label{eq:piece-o}
\begin{split}
m_{\partial\Omega}(\mathbf{p}^{\bm{\varepsilon}}_i) &= \frac{K_i^{\varepsilon_i}}{S_{\bm{\varepsilon}}} \left[ 1 + \sum_{k=1}^N A_{i,k}K_k^{\varepsilon_k} - \frac{1}{S_{\bm{\varepsilon}}}\sum_{j,k=1}^N K_j^{\varepsilon_j}K_k^{\varepsilon_k}A_{j,k} + O(|\mathbf{K}^{\bm{\varepsilon}}|^2) \right]\\
&= \frac{K_i^{\varepsilon_i}}{S_{\bm{\varepsilon}}} + \frac{K_i^{\varepsilon_i}\sum_{k=1}^N A_{i,k}K_k^{\varepsilon_k}}{S_{\bm{\varepsilon}}} - \frac{K_i^{\varepsilon_i}\sum_{j,k=1}^N K_j^{\varepsilon_j}K_k^{\varepsilon_k}A_{j,k}}{S_{\bm{\varepsilon}}^2} + O(|\mathbf{K}^{\bm{\varepsilon}}|^2).
\end{split}
\ee
Note that $m_{\partial\Omega}(\mathbf{p}^{\bm{\varepsilon}}_i) =  {K_i^{\varepsilon_i}}/{\sum_{m=1}^N K_m^{\varepsilon_m}} + O(|\mathbf{K}^{\bm{\varepsilon}}|)$, and thus $m_{\partial\Omega}(\mathbf{p}^{\bm{\varepsilon}}_i)=O(1)$ as $|\bm{\varepsilon}|\to 0$. 
We now come back to  \eqref{eq:TT} to get  that 
$$\begin{aligned}
\hat{\mathbf{T}}_j^{\bm{\varepsilon}} &= \delta_{i,j}K_i^{\varepsilon_i} - \frac{K_i^{\varepsilon_i}K_j^{\varepsilon_j}}{S_{\bm{\varepsilon}}} - K_j^{\varepsilon_j} A_{j,i} K_i^{\varepsilon_i}   - \frac{K_i^{\varepsilon_i}K_j^{\varepsilon_j}\sum_{k=1}^N A_{i,k}K_k^{\varepsilon_k}}{S_{\bm{\varepsilon}}} + \frac{K_i^{\varepsilon_i}K_j^{\varepsilon_j}\sum_{k=1}^N A_{j,k}K_k^{\varepsilon_k}}{S_{\bm{\varepsilon}}} \\
&\quad + \frac{K_i^{\varepsilon_i}K_j^{\varepsilon_j}\sum_{m,k=1}^N K_m^{\varepsilon_m}K_k^{\varepsilon_k}A_{m,k}}{S_{\bm{\varepsilon}}^2} + O(|\mathbf{K}^{\bm{\varepsilon}}|^3).
\end{aligned}$$ 
Since  $\hat{\mathbf{B}}^{\bm{\varepsilon}} = \left( \mathbf{I} + \mathbf{R}^{3,\bm{\varepsilon}} \right) \hat{\mathbf{T}}^{\bm{\varepsilon}}=   \hat{\mathbf{T}}_j^{\bm{\varepsilon}} + O(|\mathbf{K}^{\bm{\varepsilon}}|^\infty)$,   it follows that:
\be\label{eq:piece5}
\hat{\mathbf B}^{\bm \varepsilon}_j = K_i^{\varepsilon_i}\delta_{i,j} - \frac{K_i^{\varepsilon_i}K_j^{\varepsilon_j}}{\sum_{k=1}^NK_k^{\varepsilon_k}} {+ O(|\mathbf{K}^{\bm{\varepsilon}}|^2)}.
\ee 
Using \eqref{eq.pii} and the analysis  leading to \eqref{eq:eapproxinteressant}, we have in the limit $|\bm{\varepsilon}|\to 0$, for    every $\delta>0$ and for all $x\in\overline\Omega\setminus\{x^{(1)},\ldots,x^{(N)}\}$:
\bes
\mathbf p_i^{\bm \varepsilon}(x) = m_{\partial\Omega}(\mathbf{p}^{\bm{\varepsilon}}_i) -  \sum_{j=1}^N\hat{\mathbf B}^{\varepsilon}_j\, \ln|x-x^{(j)}| - \pi\sum_{j=1}^N\hat{\mathbf B}^{\varepsilon}_j\, R_{\partial\Omega}(x,x^{(j)}) + O_{L^\infty(\Omega_{\bm \varepsilon})}( |\bm \varepsilon|^{1-\delta}),
\ees
Using the definition of $A$ (see \eqref{eq.ajk}) and substituting back \eqref{eq:piece-o} and \eqref{eq:piece5}   in the previous equality yields \eqref{eq:thmP} and \eqref{eq:probaBoris}. Indeed, {we have  for all $x\in  
\Omega_{\bm \varepsilon} $ (see \eqref{eq.Oe})}, 
\bes
\begin{split}
    \sum_{j=1}^N\hat{\mathbf B}^{\varepsilon}_j\ln|x-x^{(j)}| &= K_i^{\varepsilon_i}\ln|x-x^{(i)}| - \frac{K_i^{\varepsilon_i}}{\sum_{k=1}^NK_k^{\varepsilon_k}}\sum_{j=1,j\neq i}^NK_j^{\varepsilon_j}\ln|x-x^{(j)}|  {+ O(|\mathbf{K}^{\bm{\varepsilon}}|^{2-\delta})}\\
    &= \frac{K_i^{\varepsilon_i}}{\sum_{k=1}^NK_k^{\varepsilon_k}}\Big[\sum_{j=1}^NK_j^{\varepsilon_j}\ln|x-x^{(i)}| - \sum_{j=1,j\neq i}^NK_j^{\varepsilon_j}\ln|x-x^{(j)}|\Big] {+ O(|\mathbf{K}^{\bm{\varepsilon}}|^{2-\delta})}\\
    &= \frac{K_i^{\varepsilon_i}}{\sum_{k=1}^NK_k^{\varepsilon_k}}\sum_{j=1}^NK_j^{\varepsilon_j}\ln\frac{|x-x^{(i)}|}{|x-x^{(j)}|} {+ O(|\mathbf{K}^{\bm{\varepsilon}}|^{2-\delta})}.
\end{split}
\ees
{Moreover, since $R_{\partial\Omega}$ is bounded over $\overline \Omega$,  we have  for all $x\in  
\Omega_{\bm \varepsilon} $}:
\bes
\pi\sum_{j=1}^N\hat{\mathbf B}^{\varepsilon}_jR_{\partial\Omega}(x,x^{(j)}) = \frac{K_i^{\varepsilon_i}}{\sum_{k=1}^NK_k^{\varepsilon_k}}\sum_{j=1}^NK_j^{\varepsilon_j}\big[\pi R_{\partial\Omega}(x,x^{(i)}) - \pi R_{\partial\Omega}(x,x^{(j)})\big] {+ O(|\mathbf{K}^{\bm{\varepsilon}}|^{2})}.
\ees
 The proof of  Theorem \ref{thm:2} is complete.

\section{Proofs of Corollaries \ref{cor:3}, \ref{cor:1}, and \ref{cor:2}}\label{sec:compat}
\label{sec:cor}

In this section, we  proceed to prove Corollaries \ref{cor:3}, \ref{cor:1}, and \ref{cor:2}.

\subsection{Proof of Corollary \ref{cor:3}}
 \label{sec.Cor3}
 
By \eqref{eq:thmP},  for $x\in \overline{\Omega}_{\bm{\varepsilon}}$, 
 $\P_x\big[X_{\tau_{\overline\Sigma_\ddd^{\bm \varepsilon}}}\in\overline\Sigma_\ddd^{i, \varepsilon_i}\big]= \mathbf a^{\bm{\varepsilon}}_i + b^{\bm{\varepsilon}}_i(x) + c^{\bm{\varepsilon}}_i(x) + O(|\mathbf{K}^{\bm{\varepsilon}}|^{2-\delta})$ 
where:  
\bes
    \begin{split} 
       \mathbf a^{\bm{\varepsilon}}_i&= \frac{K^{\varepsilon_i}_i}{\sum_{k=1}^{N}K_k^{\varepsilon_k}} + \frac{K^{\varepsilon_i}_i}{(\sum_{k=1}^{N}K_k^{\varepsilon_k})^2}\sum_{k=1}^N\sum_{j\notin\{i,k\}}K_k^{\varepsilon_k}K_j^{\varepsilon_j}\ln\frac{|x^{(k)}-x^{(i)}|}{|x^{(k)}-x^{(j)}|}\\
        &\quad+\frac{K^{\varepsilon_i}_i}{(\sum_{k=1}^{N}K_k^{\varepsilon_k})^2}\sum_{k=1}^NK_k^{\varepsilon_k}(K_k^{\varepsilon_k}-K_i^{\varepsilon_i})\ln|x^{(i)}-x^{(k)}| \\
&= \frac{K^{\varepsilon_i}_i}{\sum_{k=1}^{N}K_k^{\varepsilon_k}} \Bigg[ 1 + \frac{1}{\sum_{l=1}^{N}K_l^{\varepsilon_l}}\sum_{k=1}^N\sum_{j\notin\{i,k\}}K_k^{\varepsilon_k}K_j^{\varepsilon_j}\ln\frac{|x^{(k)}-x^{(i)}|}{|x^{(k)}-x^{(j)}|} \\
&\quad + \frac{1}{\sum_{l=1}^{N}K_l^{\varepsilon_l}}\sum_{k=1}^NK_k^{\varepsilon_k}(K_k^{\varepsilon_k}-K_i^{\varepsilon_i})\ln|x^{(i)}-x^{(k)}| \Bigg] =   \frac{K_i^{\varepsilon_i}}{\sum_{k=1}^{N}K_k^{\varepsilon_k}} [1+ O(|\mathbf{K}^{\bm{\varepsilon}}|)],
                \end{split}  
            \ees
            and 
          $$b^{\bm{\varepsilon}}_i(x) = \frac{K^{\varepsilon_i}_i}{\sum_{k=1}^{N}K_k^{\varepsilon_k}}\sum_{k=1}^NK_k^{\varepsilon_k}\ln\frac{|x-x^{(k)}|}{|x-x^{(i)}|}, \   c^{\bm{\varepsilon}}_i(x) = \frac{K^{\varepsilon_i}_i}{(\sum_{k=1}^{N} K_k^{\varepsilon_k})^2}\sum_{j,k=1}^N K_j^{\varepsilon_j}K_k^{\varepsilon_k}g_{i,j,k}(x).$$
          Note that from   \eqref{eq:assH}, it holds that, as $|\bm{\varepsilon}| \to 0$,  
          $$\sum_{k=1}^{N}K_k^{\varepsilon_k} \ln\frac{|x-x^{(k)}|}{|x-x^{(i)}|}= \sum_{k=1}^{N}O\big (\mu_{k,i}^{\bm{\varepsilon}}\big )\to 0, \text{  uniformly on $\overline{\Omega}_{\bm{\varepsilon}}$.} $$ 
  We also  have  when $|\bm{\varepsilon}| \to 0$:
          $$\frac{1}{ \mathbf a^{\bm{\varepsilon}}_i }= \frac{\sum_{k=1}^N K_k^{\varepsilon_k}}{K_i^{\varepsilon_i}}  [1+ O(|\mathbf{K}^{\bm{\varepsilon}}|)].$$
               Then, uniformly in $x\in   \overline{\Omega}_{\bm{\varepsilon}}$, we have:
 \bes
  \begin{split}
          \frac{b^{\bm{\varepsilon}}_i(x)}{ \mathbf a^{\bm{\varepsilon}}_i } = \sum_{k=1}^N O\big (\mu_{k,i}^{\bm{\varepsilon}}\big )[1+ O(|\mathbf{K}^{\bm{\varepsilon}}|)]= \sum_{k=1}^N O\big (\mu_{k,i}^{\bm{\varepsilon}}\big )  \ \text{ and } \     \frac{c^{\bm{\varepsilon}}_i(x)}{ \mathbf a^{\bm{\varepsilon}}_i } =O(|\mathbf{K}^{\bm{\varepsilon}}|).
 \end{split}
 \ees 
As a result, uniformly in $x,y\in   \overline{\Omega}_{\bm{\varepsilon}}$, it holds that:
  $$\frac{\P_x\big[X_{\tau_{\overline\Sigma_\ddd^{\bm \varepsilon}}}\in\overline\Sigma_\ddd^{i, \varepsilon_i}\big]}{\P_y\big[X_{\tau_{\overline\Sigma_\ddd^{\bm \varepsilon}}}\in\overline\Sigma_\ddd^{i, \varepsilon_i}\big]} = \frac{1 + \sum_{k=1}^N O\big (\mu_{k,i}^{\bm{\varepsilon}}\big ) + O(|\mathbf{K}^{\bm{\varepsilon}}|) + O(\gamma_i^{\bm \varepsilon}(\delta))  }{1 + \sum_{k=1}^N O\big (\mu_{k,i}^{\bm{\varepsilon}}\big ) + O(|\mathbf{K}^{\bm{\varepsilon}}|)  + O(\gamma_i^{\bm \varepsilon}(\delta)) },$$
  and 
  $$\P_x\big[X_{\tau_{\overline\Sigma_\ddd^{\bm \varepsilon}}}\in\overline\Sigma_\ddd^{i, \varepsilon_i}\big]= \frac{K_i^{\varepsilon_i} }{ \sum_{k=1}^N K_k^{\varepsilon_k}}  [ 1 +  O(|\mathbf{K}^{\bm{\varepsilon}}|)  ]\cdot \Big[1 + \sum_{k=1}^N O\big (\mu_{k,i}^{\bm{\varepsilon}}\big ) + O(|\mathbf{K}^{\bm{\varepsilon}}|)  + O(\gamma_i^{\bm \varepsilon}(\delta)) \Big].$$
 Note that $\sum_{k=1}^N O\big(\mu_{k,i}^{\bm{\varepsilon}}\big)$ dominates $O(|\mathbf{K}^{\bm{\varepsilon}}|)$  since as   $|\bm{\varepsilon}| \to 0$:
$$\sum_{k=1}^N \mu_{k,i}^{\bm{\varepsilon}}/|\mathbf{K}^{\bm{\varepsilon}}|\ge  - \ln (K_i^{\varepsilon_k}) \sum_{k=1}^N \frac{K_k^{\varepsilon_i} }{|\mathbf{K}^{\bm{\varepsilon}}| } \simeq - \ln (K_i^{\varepsilon_k}) \to +\infty.$$
  The proof of the corollary is complete.

\subsection{Proofs of Corollaries  \ref{cor:1} and \ref{cor:2}}

 Recall   \eqref{eq:thmP}.  
Assume that for all $k\in\lb1,N\rb$, $K_k^{\varepsilon_k}$ admits the following expansion
\bes
K_k^{\varepsilon_k} = \frac{a_k}{\ln\varepsilon_1} + \frac{b_k}{|\ln\varepsilon_1|^2} + \frac{O(1)}{|\ln\varepsilon_1|^3}
\ees
for some fixed (namely independent of $\bm \varepsilon$) constants $a_k,b_k\in\R^*$.  Assume moreover that the sum of the $a_k$ is non-zero. Therefore, we have
\be\label{eq:average}
\begin{split}
    \frac{\Ke_i}{\sum_{k=1}^N\Ke_k} &= \frac{\frac{a_i}{\ln\varepsilon_1} + \frac{b_i}{|\ln\varepsilon_1|^2} + \frac{O(1)}{|\ln\varepsilon_1|^3}}{\frac{\sum_{k=1}^Na_k}{\ln\varepsilon_1}\big(1 + \frac{1}{\ln\varepsilon_1}\frac{\sum_{k=1}^Nb_k}{\sum_{k=1}^Na_k} + \frac{O(1)}{|\ln\varepsilon_1|^2}\big)}\\
    &= \frac{a_i}{\sum_{k=1}^Na_k}\Big[1 + \frac{b_i}{a_i\ln\varepsilon_1} + \frac{O(1)}{|\ln\varepsilon_1|^2}\Big]\cdot \Big[1 - \frac{1}{\ln\varepsilon_1}\frac{\sum_{k=1}^Nb_k}{\sum_{k=1}^Na_k} + \frac{O(1)}{|\ln\varepsilon_1|^2}\Big]\\
    &= \frac{a_i}{\sum_{k=1}^Na_k}\Big[1 + \frac{b_i}{a_i\ln\varepsilon_1} - \frac{1}{\ln\varepsilon_1}\frac{\sum_{k=1}^Nb_k}{\sum_{k=1}^Na_k}\Big] + \frac{O(1)}{|\ln\varepsilon_1|^2}
\end{split}
\ee
We are now in position to prove Corollaries \ref{cor:1} and \ref{cor:2}.

  \subsubsection{Proof of Corollary \ref{cor:1}}
  In the regime of Corollary \ref{cor:1} we have  for all $k\in\lb1,N\rb$:
\bes
K_k^{\varepsilon_k} = \frac{-1}{\ln\varepsilon_1 + \ln(\alpha_k/2)} = \frac{-1}{\ln\varepsilon_1} + \frac{\ln(\alpha_k/2)}{|\ln\varepsilon_1|^2} + \frac{O(1)}{|\ln\varepsilon_1|^3}.
\ees
With the notation above, $a_k = -1$ and $ b_k = \ln(\alpha_k/2)$. 
Let $i\in\lb1,N\rb$. By \eqref{eq:average}, we get:
\bes
\frac{\Ke_i}{\sum_{k=1}^N\Ke_k} = \frac1N\Big[1 - \frac{\ln(\alpha_i/2)}{\ln\varepsilon_1} + \frac{1}{N\ln\varepsilon_1}\sum_{k=1}^N\ln(\alpha_k/2)\Big] + \frac{O(1)}{|\ln\varepsilon_1|^2}.
\ees
Note also that 
\bes
K_k^{\varepsilon_k} - K_i^{\varepsilon_i} = \frac{\ln(\alpha_k/\alpha_i)}{|\ln\varepsilon_1|^2} + \frac{O(1)}{|\ln\varepsilon_1|^3},
\ees
 Thus, using  \eqref{eq:thmP}, we deduce that     for all  $\delta\in (0,2)$,  as  $|\bm \varepsilon|\to 0$,  it holds uniformly in  $x\in\overline\Omega_{\bm \varepsilon}$:
\bes
\begin{split}
    \mathbf p_i^{\bm \varepsilon}(x) &= \frac1N - \frac{\ln\alpha_i}{N\ln\varepsilon_1} + \frac{\sum_{k=1}^N\ln \alpha_k}{N^2\ln\varepsilon_1} - \frac1{N^2\ln\varepsilon_1}\sum_{k=1}^N\sum_{j\notin\{i,k\}}\ln\frac{|x^{(j)}-x^{(i)}|}{|x^{(j)}-x^{(k)}|}\\
    &\quad- \frac1{N\ln\varepsilon_1}\sum_{k=1}^N\ln\frac{|x-x^{(k)}|}{|x-x^{(i)}|} - \frac1{N^2\ln\varepsilon_1}\sum_{j,k=1}^Ng_{i,j,k}(x) + \frac{O_{L^\infty(\Omega_{\bm \varepsilon})}(1)}{|\ln\varepsilon_1|^{2-\delta}}\\
    &= \frac1N + \frac{1}{N^2\ln\varepsilon_1}\sum_{k=1}^N\ln\frac{\alpha_k\prod_{j\neq k}|x^{(j)}-x^{(k)}|}{\alpha_i\prod_{j\neq i}|x^{(j)}-x^{(i)}|}\\
    &\quad- \frac{1}{N\ln\varepsilon_1}\sum_{k=1}^N\ln\frac{|x-x^{(k)}|}{|x-x^{(i)}|} - \frac1{N^2\ln\varepsilon_1}\sum_{j,k=1}^Ng_{i,j,k}(x) + \frac{O_{L^\infty(\Omega_{\bm \varepsilon})}(1)}{|\ln\varepsilon_1|^{2-\delta}}.
\end{split}
\ees
The proof of Corollary   \ref{cor:1} is complete.


  \subsubsection{Proof of Corollary \ref{cor:2}}

In the regime of Corollary \ref{cor:2} we have   for all $k\in\lb1,N\rb$,
\bes
K_k^{\varepsilon_k} = \frac{-\beta_k}{\ln\varepsilon_1 - \beta_k\ln2} = \frac{-\beta_k}{\ln\varepsilon_1} - \frac{\beta_k^2\ln2}{|\ln\varepsilon_1|^2} + \frac{O(1)}{|\ln\varepsilon_1|^3}.
\ees
In this case, $a_k = -\beta_k$ and $b_k = -\beta_k^2\ln2$. 
Let $i\in\lb1,N\rb$. Therefore, \eqref{eq:average} yields
\bes
\begin{split}
    \frac{\Ke_i}{\sum_{k=1}^N\Ke_k} &= \frac{\beta_i}{\sum_{k=1}^N\beta_k}\Big[1 + \frac{\beta_i^2\ln2}{\beta_i\ln\varepsilon_1} - \frac{1}{\ln\varepsilon_1}\frac{\sum_{k=1}^N\beta_k^2\ln2}{\sum_{k=1}^N\beta_k}\Big] + \frac{O(1)}{|\ln\varepsilon_1|^2}\\
    &= \frac{\beta_i}{\sum_{k=1}^N\beta_k} + \frac{\ln2}{\ln\varepsilon_1}\frac{\beta_i}{(\sum_{k=1}^N\beta_k)^2}\Big(\sum_{k=1}^N\beta_k{\beta_i} - {\sum_{k=1}^N\beta_k^2}\Big) + \frac{O(1)}{|\ln\varepsilon_1|^2}\\
    &= \frac{\beta_i}{\sum_{k=1}^N\beta_k} + \frac{\ln2}{\ln\varepsilon_1}\frac{\beta_i}{(\sum_{k=1}^N\beta_k)^2}\sum_{k=1}^N\beta_k(\beta_i - \beta_k) + \frac{O(1)}{|\ln\varepsilon_1|^2}.
\end{split}
\ees
Moreover, $K_k^{\varepsilon_k}-K_i^{\varepsilon_i} =  \frac{-\beta_k}{\ln\varepsilon_1}   -  \frac{-\beta_i}{\ln\varepsilon_1}   + O(|\ln\varepsilon_1|^{-2}) = \frac{\beta_i - \beta_k}{\ln\varepsilon_1} + O(|\ln\varepsilon_1|^{-2}) $ and therefore: 
\bes
\frac{K^{\varepsilon_i}_i}{\sum_{j=1}^{N}K_j^{\varepsilon_j}}\frac{K^{\varepsilon_k}_k}{\sum_{j=1}^{N}K_j^{\varepsilon_j}}(K_k^{\varepsilon_k}-K_i^{\varepsilon_i}) = \frac{1}{\ln\varepsilon_1}\frac{\beta_i}{\sum_{j=1}^N\beta_j}\frac{\beta_k}{\sum_{j=1}^N\beta_j}(\beta_i-\beta_k) {+O(|\ln\varepsilon_1|^{-2})}.
\ees
Thus,  using  \eqref{eq:thmP}, we obtain that    for all  $\delta\in (0,2)$,  as  $|\bm \varepsilon|\to 0$,  it holds uniformly in  $x\in\overline\Omega_{\bm \varepsilon}$:
\bes
\begin{split}
    \mathbf p_i^{\bm \varepsilon}(x) &= \frac{\beta_i}{\sum_{k=1}^N\beta_k} + \frac{1}{\ln\varepsilon_1}\frac{\beta_i}{(\sum_{k=1}^N\beta_k)^2}\sum_{k=1}^N\beta_k(\beta_i - \beta_k)\ln(2|x^{(k)}-x^{(i)}|)\\
    &\quad- \frac{1}{\ln\varepsilon_1}\frac{\beta_i}{(\sum_{k=1}^N\beta_k)^2}\sum_{k=1}^N\sum_{j\notin\{i,k\}}\beta_k\beta_j\ln\frac{|x^{(j)}-x^{(i)}|}{|x^{(j)}-x^{(k)}|} - \frac{1}{\ln\varepsilon_1}\frac{\beta_i}{\sum_{k=1}^N\beta_k}\sum_{k=1}^N\beta_k\ln\frac{|x-x^{(k)}|}{|x-x^{(i)}|}\\
    &\quad- \frac{1}{\ln\varepsilon_1}\frac{\beta_i}{(\sum_{k=1}^N\beta_k)^2}\sum_{k=1}^N\sum_{j\notin\{i,k\}}\beta_k\beta_jg_{i,j,k}(x) + \frac{O_{L^\infty(\Omega_{\bm \varepsilon})}(1)}{|\ln\varepsilon_1|^{2-\delta}}.
\end{split}
\ees
The proof of Corollary   \ref{cor:2} is complete. 
 \medskip

\noindent
\textbf{Acknowledgments}. 
This research was  partially funded by the French National Research Agency (ANR) via Project ANR-25-CE40-6875-01 (ANR DySLoS). B. Nectoux  wishes to thank D. Le Peutrec and L. Michel for their insightful discussions regarding the regularity of solutions to mixed boundary value problems.

\

\end{document}